\newtheorem{theorem}{\hskip\parindent\bf Theorem}[section]
\newtheorem{lemma}{\hskip\parindent\bf Lemma}[section]
\newtheorem{remark}{\hskip\parindent\bf Remark}[section]
\newtheorem{definition}{\hskip\parindent\bf Definition}[section]
\newtheorem{example}{\hskip\parindent\bf Example}[section]
\numberwithin{equation}{section}
\newcommand{\al}{\alpha}
\newcommand{\bt}{\beta}
\newcommand{\ba}{\begin{array} }
\newcommand{\ea}{\end{array} }
\newcommand{\be}{\begin{equation} }
\newcommand{\ee}{\end{equation} }
\newcommand{\baa}{\begin{align} }
\newcommand{\eaa}{\end{align} }
\newcommand{\da}{\delta}
\newcommand{\kal}{\kappa}
\newcommand{\la}{\lambda}
\newcommand{\f}{\displaystyle\frac}
\newcommand{\vp}{\varphi}
\newcommand{\ga}{\gamma}
\newcommand{\ep}{\epsilon}
\newcommand{\ma}{{\mathcal{A}}}
\newcommand{\mc}{{\mathcal{C}}}
\newcommand{\md}{{\mathcal{D}}}
\newcommand{\mab}{{\widehat{\mathcal{A}}}}
\newcommand{\ra}{\rightarrow}
\newcommand{\Og}{\Omega}
\newcommand{\og}{\omega}
\newcommand{\wpt}{\widehat{P}}
\newcommand{\wqt}{\widehat{Q}}
\newcommand{\wkt}{\widehat{K}}
\newcommand{\pt}{\widetilde{P}}
\newcommand{\qtt}{\widetilde{Q}}
\newcommand{\sg}{\sigma}
\newcommand{\R}{{\mathbb{R}}}
\newcommand{\Z}{{\mathbb{Z}}}
\newcommand{\N}{{\mathbb{N}}}
\newcommand{\W}{{\mathcal{W}}}
\newcommand{\Xa}{{\mathcal{X}}}
\newcommand{\B}{{\mathcal{B}}}
\newcommand{\tu}[1]{\textup{#1}}
\newcommand{\ve}{\varepsilon}
\newcommand{\iy}{\infty}
\newcommand{\ta}{\theta}
\newcommand{\lf}{\left(}
\newcommand{\rf}{\right)}
\newcommand{\hc}{\hat{c}}
\DeclareMathOperator{\id}{id} \DeclareMathOperator{\Imm}{Im}
\newcommand{\Aa}{{\widetilde{\mathcal{A}}}}
\newcommand{\Bb}{{\widetilde{\mathcal{B}}}}
\journal{ Bulletin des Sciences Math\'{e}matiques }
\begin{document}
\begin{frontmatter}
\title{Nonuniform $(h,k,\mu,\nu)$-dichotomy and
Stability of Nonautonomous Discrete Dynamics}

\tnotetext[label1]{M. Fan is supported by NSFC-10971022,
NCET-08-0755, and FRFCU-10JCXK003; J. Zhang and L. Yang are
supported by NSFC-11201128, NSFHLJ-A201414, China Postdoctoral
Science Foundation, Science and Technology Innovation Team in
Higher Education Institutions of Heilongjiang
Province(No.2014TD005), the Heilongjiang University Fund for
Distinguished Yonng Scholars (JCL201203).}

\author{ Jimin Zhang$^a$, Meng Fan$^b$, Liu Yang$^a$}

\ead{mfan@nenu.edu.cn}

\address[label1]{School of Mathematical Sciences, Heilongjiang
University, 74 Xuefu Street, Harbin, Heilongjiang, 150080, P. R.
China}

\address[label2]{School of Mathematics and Statistics, Northeast
Normal University, 5268 Renmin Street, Changchun, Jilin, 130024,
P. R. China}

\begin{abstract}
In this paper, a new notion called the general nonuniform
$(h,k,\mu,\nu)$-dichotomy for a sequence of linear operators is
proposed, which occurs in a more natural way and is related to
nonuniform hyperbolicity. Then, sufficient criteria are
established for the existence of nonuniform
$(h,k,\mu,\nu)$-dichotomy in terms of appropriate Lyapunov
exponents for the sequence of linear operators.  Moreover, we
investigate the stability theory of sequences of nonuniformly
hyperbolic linear operators in Banach spaces, which admit a
nonuniform $(h,k,\mu,\nu)$-dichotomy. In the case of linear
perturbations, we investigate parameter dependence of robustness
or roughness of the nonuniform $(h,k,\mu,\nu)$-dichotomies and
show that the stable and unstable subspaces of nonuniform
$(h,k,\mu,\nu)$-dichotomies for the linear perturbed system are
Lipschitz continuous for the parameters.  In the case of nonlinear
perturbations, we construct a new version of the Grobman-Hartman
theorem and explore the existence of parameter dependence of
stable Lipschitz invariant manifolds when the nonlinear
perturbation is of Lipschitz type.

\end{abstract}

\begin{keyword}
Nonuniform $(h,k,\mu,\nu)$-dichotomies; Roughness; Hartman-Grobman
theorem; Stable Lipschitz invariant manifolds

\emph {2010 Mathematics Subject Classifications}: 34D09, 34D10,
37D25
\end{keyword}
\end{frontmatter}

\section{Introduction}
\noindent

The classical notion of the uniform exponential dichotomy,
essentially introduced in the seminal work of Perron
\cite{Perron1930}, has been playing a center role in a substantial
part of the theory of uniformly hyperbolic dynamical systems. The
theory of exponential dichotomies and its applications are widely
developed. We refer to the books
\cite{Coppelbook1978,Finkbook1974,Kloeden2011,Meebook2008} for
more details and references. However, the uniform exponential
dichotomy is very stringent for the
 dynamics and it is of interest and is very important to look for more general
 types of hyperbolic behavior \cite{Barreira2008book,Naulin1995,Megan2002,Minda2011,Preda2012,Lupa2013,Jiang2006,Bento2009,Bento2012,Bento2013,Sasu2013}.

The concept of nonuniform hyperbolicity, describing the theory of
continuous or discrete dynamical systems with nonzero Lyapunov
exponents, generalizes the classical concept of uniform
hyperbolicity and  has been widely recognized both in various
fields of mathematics and in practical applications
\cite{Barreirabook2007,Graczyk2009,Pesin2010,Shub2000}. Recently,
various different kinds of nonuniform dichotomy are proposed,
which are exhibited by a large class of differential or difference
equations and closely related to the theory of nonuniform
hyperbolicity, e.g. nonuniform exponential dichotomy
\cite{Barreira2008book,Megan2002,Minda2011,Preda2012,Lupa2013,Sasu2013,Chu2014},
nonuniform polynomial dichotomy
\cite{Bento2009,Barreira2009a,bfvz2011}, $\rho$-nonuniform
exponential dichotomy \cite{Barreira2009b}, nonuniform
$(\mu,\nu)$-dichotomy
\cite{Bento2012,Bento2013,Chang2011,Barreirachv2013,Chu2013}, and
so on. Moreover, the uniform or nonuniform dichotomy, together
with its variants and extensions, is always one of the most
important and useful means in the study of the stability theory of
the uniform or nonuniform hyperbolic dynamical systems, such as,
the roughness in the finite dimensional spaces
\cite{Coppelbook1978,Naulin1995,Barreirachv2013,Ju2001,nrmp1998,Naulin1997,Barreiraqc2013}
or in the infinite dimensional spaces
\cite{Barreira2008book,bfvz2011,Barreira2009b,Chang2011,Chow1995,Mendez2008,Pliss1999,Popescu2009,plh2006,Barreirapa2011jmaa},
the linearization theory
\cite{Barreira2008book,Popescu2009,Barreira2009d,Kurzweil1991,Palmer1973,Palmer1979,Papaschinopoulos1987,Popescu2004,Xia2015},
and the existence of invariant manifolds and their absolute
continuity\cite{Barreira2008book,Barreirvz2012,bfvz2011b,Barreira2010N,Chow1991JDE,Teichmann2003,Sacker1980JDE}.

In previous studies of uniform or nonuniform dichotomies, the
growth rates are always assumed to be the same type of functions.
However, the nonuniformly hyperbolic dynamical systems vary
greatly in forms and none of the nonuniform dichotomy can well
characterize all the nonuniformly hyperbolic dynamics. For
example, if we choose some appropriate Lyapunov exponents, then
the growth rates may be completely different (see Section
\ref{existencesection} below). It is necessary and reasonable to
look for more general types of nonuniform dichotomies to explore
the dynamics of the nonuniformly hyperbolic dynamical systems.

The nonuniform dichotomy is not only an essential part of the
theory of nonuniform hyperbolicity but also an important approach
to explore the nonuniform hyperbolicity of dynamical systems. The
main novelty of the present work is that we consider a new notion
called the generalized nonuniform $(h,k,\mu,\nu)$-dichotomy for
sequences of nonuniformly hyperbolic linear operators, which not
only incorporates the existing notions of the uniform or
nonuniform dichotomies as special cases, but also allows the
different growth rates in the stable space and unstable space or
in the uniform part and nonuniform part with rates of expansion
and contraction varying in different manner. Particularly, we will
establish a sufficient criterion for sequences of linear operators
in block form in a finite-dimensional space to have a nonuniform
$(h,k,\mu,\nu)$-dichotomy in terms of appropriate Lyapunov
exponents in Section \ref{existencesection}. It follows from the
results in the present paper that the notion of nonuniform
$(h,k,\mu,\nu)$-dichotomy occurs naturally.

For a nonautonomous discrete dynamics defined by a sequence of
linear operators in a Banach space, we investigate the parameter
dependence of the roughness of nonuniform
$(h,k,\mu,\nu)$-dichotomy under sufficiently small linear
perturbations in Section \ref{robustnesssection}. With the help of
nonuniform $(h,k,\mu,\nu)$-dichotomy, we explore the topological
conjugacies by establishing a new version of the Grobman-Hartman
theorem in Section \ref{ghsection}, and, finally, we establish the
existence of parameter dependence of stable Lipschitz invariant
manifolds.

\section{Nonuniform $(h,k,\mu,\nu)$-dichotomies}\label{existencesection}
\noindent

Let $\B(X)$ be the space of bounded linear operators in a Banach
space $X$. Consider the sequence of invertible linear operators
$\{A_m\}_{m\in\Z}\subset\B(X)$. Define
\[
\ma(m,n)=\begin{cases}
A_{m-1}\cdots A_n, &\text{if}\ m>n,\\
\id, &\text{if}\ m=n,\\
A_m^{-1}\cdots A_{n-1}^{-1},&\text{if}\ m<n.
\end{cases}
\]

\begin{definition}
A sequence of numbers $\{u_m\}_{m\in\Z}$ is said to be a growth
rate if $ \cdots<u_n<\cdots<u_{-1}<u_0=1<u_1<\cdots<u_m<\cdots,
\lim_{m\ra+\iy}u_m=+\iy,~~\lim_{n\ra-\iy}u_n=0. $

\end{definition}
Denote by $\Delta$ the set of growth rates and always assume that
$$\{h_m\}_{m\in\Z},~ \{k_m\}_{m\in \Z},~ \{\mu_m\}_{m\in\Z},~
\{\nu_m\}_{m\in \Z}\in\Delta$$ throughout the paper.

\begin{definition}\label{definitionhkuv}
The sequence of linear operators $(A_m)_{m\in\Z}$ is said to have
a \emph{nonuniform $(h,k,\mu,\nu)$-dichotomy} if there exist
projections $P_n$ for $n\in\Z$ such that
$$P_m\ma(m,n)=\ma(m,n)P_n,~~m,n\in\Z$$
and there exist constants $a<0\leq b$, $\ve\geq0$ and $K>0$ such
that
\begin{equation}\label{deeqbl}
\ba{ll}
\|\ma(m,n)P_n\|\leq K (h_m/h_n)^a\mu_{|n|}^\ve,~~&m\geq n,\\
\|\ma(m,n)Q_n\|\leq K(k_n/k_m)^{-b}\nu_{|n|}^\ve,~~&m\leq n,\ea
\end{equation}
where $Q_n=\id-P_n$ are the complementary projections.
\end{definition}
\begin{remark}
The nonuniform $(h,k,\mu,\nu)$-dichotomy is general enough to
include as special cases the uniform exponential dichotomy
($h_m=k_m=e^m, \ve=0$)
\cite{Coffman1967,Kurzweil1991,Papaschinopoulos1985,Papaschinopoulos1987},
$(h,h)$-dichotomy ($h_m=k_m, \ve=0$) \cite{Naulin1997},
$(h,k)$-dichotomy ($\ve=0$) \cite{Naulin1997}, nonuniform
exponential dichotomy ($h_m=k_m=e^m,\mu_{|m|}=\nu_{|m|}=e^{|m|}$)
\cite{Barreira2008book}, nonuniform polynomial dichotomy
($h_m=k_m=\mu_m=\nu_m=m+1,m\in\Z^+$) \cite{bfvz2011}, nonuniform
$(\mu,\nu)$-dichotomy ($h_m=k_m=\mu_m$ and $\mu_m=\nu_m=\nu_m$,
$m\in\N$) \cite{Bento2012,Chu2013}, $\rho$-nonuniform exponential
dichotomy ($h_m=k_m=\mu_m=\nu_m=e^{\rho(m)},m\in\N$)
\cite{Barreira2009b,Barreira2009d}.
\end{remark}

\begin{remark}
In \cite{Bento2012aaa}, the authors proposed a general dichotomy
on $\N$ and choose two functions in the stable space and unstable
space. While, in Definition \ref{definitionhkuv}, four different
functions for growth rates are chosen in the stable space, the
unstable space, the uniform part, and the nonuniform part.
Compared with the notion in \cite{Bento2012aaa}, Definition
\ref{definitionhkuv} is more reasonable and occurs in a more
natural way, where $a$ and $b$ play the role of Lyapunov exponents
and $\ve$ measures the nonuniformity of dichotomies. The reason is
that, in a finite-dimensional space, one can establish a
sufficient criterion for sequences of linear operators in block
form to have a nonuniform $(h,k,\mu,\nu)$-dichotomy in terms of
appropriate Lyapunov exponents, and Definition
\ref{definitionhkuv} can  more closely connect the theory of
Lyapunov exponents with the theory of nonuniform hyperbolicity.
Those facts will be found in the following discussion.
\end{remark}

\begin{example}\label{exaaa}
Consider the difference equation in $\R^2$
\begin{equation}\label{eqxy}
z_{m+1}^1=\left(\f{h_{m+1}}{h_m}\right)^{-\ta_1}e^{\ta_2d_m^1}z^1_m,~~~~
z^2_{m+1}=\left(\f{k_{m+1}}{k_m}\right)^{\ta_3}e^{\ta_2d_m^2}z^2_m,~~m\in\Z,
\end{equation}
where
\begin{align*}
d_m^1&=\log(\mu_{m+1})(\sin\log(\mu_{m+1})-1)+\cos\log(\mu_{m+1})-\cos\log(\mu_m)\\&\quad-\log(\mu_m)(\sin\log(\mu_m)-1),\\
d_m^2&=\log(\nu_{m+1})(\sin\log(\nu_{m+1})-1)+\cos\log(\nu_{m+1})-\cos\log(\nu_m)\\&\quad-\log(\nu_m)(\sin\log(\nu_m)-1)
\end{align*}
and $\ta_1,\ta_2,\ta_3$ are positive constants.
\end{example}
Set $P_m(z^1_m,z^2_m)=z^1_m$ and $Q_m(z^1_m,z^2_m)=z^2_m$ for
$m\in\Z$. Then
\begin{align*}
&\ma(m,n)P_n=\left(\f{h_m}{h_n}\right)^{-\ta_1}e^{\ta_2d^1(m,n)},\\
&\ma(m,n)Q_n=\left(\f{k_m}{k_n}\right)^{\ta_3}e^{\ta_2d^2(m,n)},
\end{align*}
where
\begin{align*}
d^1(m,n)&=\log(\mu_m)(\sin\log(\mu_m)-1)
+\cos\log(\mu_m)\\&\quad-\cos\log(\mu_n)-\log(\mu_n)(\sin\log(\mu_n)-1),\\
d^2(m,n)&=\log(\nu_m)(\sin\log(\nu_m)-1)
+\cos\log(\nu_m)\\&\quad-\cos\log(\nu_n)-\log(\nu_n)(\sin\log(\nu_n)-1).
\end{align*}
It follows that
\begin{align*}
\|\ma(m,n)P_n\|&\leq  e^{2\ta_2}\left(\f{h_m}{h_n}\right)^{-\ta_1}\mu_{n}^{2\ta_2}\leq e^{2\ta_2}\left(\f{h_m}{h_n}\right)^{-\ta_1}\mu_{|n|}^{2\ta_2}, ~m\geq n, \\
\|\ma(m,n)Q_n\|&\leq
e^{2\ta_2}\left(\f{k_n}{k_m}\right)^{-\ta_3}\nu_{n}^{2\ta_2}\leq
e^{2\ta_2}\left(\f{k_n}{k_m}\right)^{-\ta_3}\nu_{|n|}^{2\ta_2},~m\leq
n
\end{align*}
which implies that \eqref{eqxy} admits a nonuniform
$(h,k,\mu,\nu)$-dichotomy with
\[
K=e^{2\ta_2},~~a=-\ta_1,~~b=\ta_3,~~ \ve=2\ta_2.
\]
In addition to the existing uniform or nonuniform dichotomies,
when $h,k,\mu,\nu$ are chosen to be different sequences, one
obtains some new nonuniform dichotomies such as
\begin{itemize}
\item $h_m=k_m=\og_1 m$, $\mu_m=\nu_m=\og_2 m+1$ with
$\og_1,\og_2$ being positive constants and $m\in\N$;

\item  $h_m=k_m=e^{m}$, $\mu_m=\nu_m=e^{{m^3}+m}$;

\item $h_m=k_m=m+1$ and $\mu_m=\nu_m=e^{m}$;

\item $h_m=\mu_m=m+1$ and $k_m=\nu_m=e^m$.
\end{itemize}

Example \ref{exaaa} shows the generality of the nonuniform
$(h,k,\mu,\nu)$-dichotomy. In the following, we establish some
sufficient criteria for the sequences of linear operators in block
form in a finite-dimensional space to have a nonuniform
$(h,k,\mu,\nu)$-dichotomy on $\N$.

Assume that $X=\R^n=E\oplus F$ with $\dim E=l$ and $\dim F=n-l$.
Given a sequence of invertible matrixes
$\{A_m\}_{m\in\N}\subset\B(X)$ with $A_m={\rm diag}(C_m, D_m)$
with respect to the above decomposition. Define
$\vp,\bar{\vp}:E\ra[-\iy,+\iy]$ and
$\psi,\bar{\psi}:F\ra[-\iy,+\iy]$ by
\begin{equation}\label{eqvpa}
\begin{split}
&\vp(y)=\limsup\limits_{m\ra+\iy}\f{\log\|C_{m-1}\cdots
C_1y\|}{\log
h_m},\\
&\psi(z)=\limsup\limits_{m\ra+\iy}\f{\log\|D_{m-1}\cdots
D_1z\|}{\log k_m},\\
&\bar{\vp}(y)=\limsup\limits_{m\ra+\iy}\f{\log\|(C_1^T\cdots
C_{m-1}^T)^{-1}y\|}{\log
\bar{h}_m},\\
&\bar{\psi}(z)=\limsup\limits_{m\ra+\iy}\f{\log\|(D_1^T\cdots
D_{m-1}^T)^{-1}z\|}{\log \bar{k}_m},
\end{split}
\end{equation}
where $y\in E$, $z\in F$, $\bar{h}_m$ and $\bar{k}_m$ are growth
rates, and $\log0=-\iy$. By carrying out similar arguments to
those of Proposition 10.2 in \cite{Barreira2008book} or of
Proposition 1 in \cite{Barreira2009a}, we claim that
\begin{itemize}
\item [(i)] $\vp(0)=\bar{\vp}(0)=\psi(0)=\bar{\psi}(0)=-\iy$;

\item [(ii)] $\vp(\tilde{c} y)=\vp(y)$,  $\bar{\vp}(\tilde{c}
y)=\bar{\vp}(y)$, $\psi(\tilde{c} z)=\psi(z)$ and
$\bar{\psi}(\tilde{c} z)=\bar{\psi}(z)$ for $y\in E, z\in F$ and
$\tilde{c}\in\R\setminus\{0\}$;

\item [(iii)] $\vp(y'+y'')\leq\max\{\vp(y'),\vp(y'')\}$,
$\bar{\vp}(y'+y'')\leq\max\{\bar{\vp}(y'),\bar{\vp}(y'')\}$,
$\psi(z'+z'')\leq\max\{\psi(z'),\psi(z'')\}$ and
$\bar{\psi}(z'+z'')\leq\max\{\bar{\psi}(z'),\bar{\psi}(z'')\}$ for
$y',y''\in E$ and $z',z''\in F$.

\item [(iv)] $\vp(y'+y'')=\max\{\vp(y'),\vp(y'')\}$,
$\bar{\vp}(y'+y'')=\max\{\bar{\vp}(y'),\bar{\vp}(y'')\}$,
$\psi(z'+z'')=\max\{\psi(z'),\psi(z'')\}$ and
$\bar{\psi}(z'+z'')=\max\{\bar{\psi}(z'),\bar{\psi}(z'')\}$
whenever $\vp(y')\neq\vp(y'')$, $\bar{\vp}(y')\neq\bar{\vp}(y'')$,
$\psi(z')\neq\psi(z'')$ and $\bar{\psi}(z')\neq\bar{\psi}(z'')$;

\item [(v)] $y^1,\cdots,y^{m}$ are linearly independent if
$\vp(y^1),\cdots,\vp(y^m)$ or
$\bar{\vp}(y^1),\cdots,\bar{\vp}(y^m)$ are distinct for
$y^1,\cdots,y^{m}\in E\setminus\{0\}$; $z^1,\cdots,z^{m'}$ are
linearly independent if $\psi(z^1),\cdots,\psi(z^{m'})$ or
$\bar{\psi}(z^1),\cdots,\bar{\psi}(z^{m'})$ are distinct for
$z^1,\cdots,z^{m'}\in F\setminus\{0\}$;

\item [(vi)] $\vp$ ($\bar{\vp}$) has at most $r\leq l$
($\bar{r}\leq l$) distinct values in $E\setminus\{0\}$, say
$-\iy\leq \la_1<\cdots<\la_r\leq+\iy$ ( $ -\iy\leq
\bar{\la}_{\bar{r}}<\cdots<\bar{\la}_1\leq+\iy $); $\psi$
($\bar{\psi}$) has at most $r'\leq n-l$ ( $\bar{r}'\leq n-l$)
distinct values in $F\setminus\{0\}$, say $ -\iy\leq
\chi_1<\cdots<\chi_{r'}\leq+\iy$ ($ -\iy\leq
\bar{\chi}_{\bar{r}'}<\cdots<\bar{\chi}_1\leq+\iy$);

\item [(vii)] $E_i=\{y\in E:\la(y)\leq \la_i\}$
($\overline{E}_i=\{y\in E:\bar{\la}(y)\leq \bar{\la}_i\}$) is a
linear space for $i=1,\cdots,r$ ($i=1,\cdots,\bar{r}$);
$F_i=\{z\in F:\chi(z)\leq \chi_i\}$ ($\overline{F}_i=\{z\in
F:\bar{\chi}(z)\leq \bar{\chi}_i\}$) is a linear space for
$i=1,\cdots,r'$ ($i=1,\cdots,\bar{r}'$).
\end{itemize}
If (i), (ii) and (iii) hold, $(\vp,\psi)$
($(\bar{\vp},\bar{\psi})$) is said to be the $(h,k)$
($(\bar{h},\bar{k})$) Lyapunov exponent with respect to the linear
operators $(A_m)_{m\in\N}$. Let $\varrho_1,\cdots,\varrho_n$ and
$\zeta_1,\cdots,\zeta_n$ be two bases of $\R^n$, they are said to
be dual if $(\varrho_i,\zeta_j)=\og_{ij}$ for every $i,j$, where
$(\cdot,\cdot)$ is the standard inner product in $\R^n$ and
$\og_{ij}$ is the Kronecker symbol. In order to introduce the
regularity coefficients of $\vp,\bar{\vp}$ and $\psi,\bar{\psi}$,
assume that $\la_i,\bar{\la}_i,\chi_i,\bar{\chi}_i$ are finite.
Define the \emph{regularity coefficient} of $\vp$ and $\bar{\vp}$
by
$$
\ga(\vp,\bar{\vp})=\min\max\{\vp(\da_i)+\bar{\vp}(\bar{\da}_i):1\leq
i\leq l\},
$$
where the minimum is taken over all dual bases
$\da_1,\cdots,\da_l$ and $\bar{\da}_1,\cdots,\bar{\da}_l$ of $E$.
The \emph{regularity coefficient of $\psi$ and $\bar{\psi}$} is
defined by
$$
\bar{\ga}(\psi,\bar{\psi})=\min\max\{\psi(\ep_i)+\bar{\psi}(\bar{\ep}_i):1\leq
i\leq n-l\},
$$
where the minimum is taken over all dual bases
$\ep_1,\cdots,\ep_{n-l}$ and $\bar{\ep}_1,\cdots,\bar{\ep}_{n-l}$
of $F$.


\begin{theorem}\label{thhaha}
Assume that $\vp(y)<0$ for $y\in E\setminus\{0\}$ and $\psi(z)>0$
for $z\in F\setminus\{0\}$ with $\la_r<0<\chi_1$. Then, for any
sufficiently small $\tilde{\ve}>0$, the sequence of linear
operators $(A_m)_{m\in\N}$ admits a nonuniform
$(h,k,\mu,\nu)$-dichotomy with
$$a=\la_r+\tilde{\ve},~b=\chi_1+\tilde{\ve},~\ve=\max\{\ga(\vp,\bar{\vp}),~ \bar{\ga}(\psi,\bar{\psi})\}+\tilde{\ve},
~\mu_{m}=h_{m}\bar{h}_{m},~\nu_{m}=k_{m}\bar{k}_{m}.
$$
\end{theorem}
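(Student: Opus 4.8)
The plan is to use the block-diagonal structure of $(A_m)_{m\in\N}$ to reduce the problem to two separate estimates, one for the stable block $C_m$ on $E$ and one for the unstable block $D_m$ on $F$, and to take the projections $P_n$ to be the projection onto $E$ along $F$ (so $P_n$ is independent of $n$ and commutes with every $\ma(m,n)$, giving the required invariance). The key fact to exploit is that $\vp$ is a (generalized) Lyapunov exponent with only negative values $\la_1<\cdots<\la_r<0$ on $E\setminus\{0\}$, and $\bar\vp$ the dual exponent, so that the definition of the regularity coefficient $\ga(\vp,\bar\vp)$ lets us pick dual bases $\da_i,\bar\da_i$ of $E$ for which $\vp(\da_i)+\bar\vp(\bar\da_i)$ is within $\tilde\ve$ of $\ga(\vp,\bar\vp)$ for all $i$. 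From the definitions \eqref{eqvpa}, for any fixed $\tilde\ve>0$ there is $N_0$ such that for $m\ge n\ge N_0$ (and, after enlarging the constant, for all $m\ge n\ge 1$), $\|C_{m-1}\cdots C_1\,\da_i\|\le h_m^{\la_r+\tilde\ve}$ and $\|(C_1^T\cdots C_{m-1}^T)^{-1}\bar\da_i\|\le \bar h_m^{\bar\vp(\bar\da_i)+\tilde\ve}$.

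The heart of the argument is the standard trick (as in Proposition 10.2 of \cite{Barreira2008book} or Proposition 1 of \cite{Barreira2009a}) of writing the transition operator $C_{m-1}\cdots C_n$ on the basis $\da_i$, using the dual basis $\bar\da_i$ to extract coordinates: for $y=\sum_i c_i(y)\da_i$ one has $c_i(y)=(\,(C_1^T\cdots C_{n-1}^T)^{-1}\bar\da_i,\,C_{n-1}\cdots C_1 y\,)$ up to the cocycle bookkeeping, hence
\[
\|C_{m-1}\cdots C_n y\|\le \sum_{i=1}^{l}\|C_{m-1}\cdots C_1\da_i\|\,\cdot\,\|(C_1^T\cdots C_{n-1}^T)^{-1}\bar\da_i\|\,\cdot\,\|y\|.
\]
Plugging in the two bounds above gives, for $m\ge n\ge 1$,
\[
\|C_{m-1}\cdots C_n y\|\le K_1\, h_m^{\la_r+\tilde\ve}\,\bar h_n^{\,\ga(\vp,\bar\vp)+\tilde\ve}\,\|y\| \le K_1\, (h_m/h_n)^{\la_r+\tilde\ve}\,(h_n\bar h_n)^{\,\ga(\vp,\bar\vp)+\tilde\ve}\,\|y\|,
\]
where in the last step I used $h_n\le h_m$ when $\la_r+\tilde\ve<0$. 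Recognizing $h_n\bar h_n=\mu_n$ and absorbing $\tilde\ve$ appropriately, this is exactly the first inequality of \eqref{deeqbl} with $a=\la_r+\tilde\ve$ and exponent $\ve=\ga(\vp,\bar\vp)+\tilde\ve\le \max\{\ga(\vp,\bar\vp),\bar\ga(\psi,\bar\psi)\}+\tilde\ve$, on restricting to $n\ge1$; since $\mu_{|n|}\ge 1$ one may bound $\mu_n^\ve\le\mu_{|n|}^\ve$ trivially on $\N$. The unstable estimate is entirely symmetric: apply the same computation to $D_m^{-1}$ acting on $F$ (equivalently, to $(D_{m-1}\cdots D_n)^{-1}$ for $m\le n$), using $\psi$ with smallest value $\chi_1>0$, its dual $\bar\psi$, and the dual bases $\ep_i,\bar\ep_i$, to obtain $\|D_{n-1}\cdots D_m)^{-1}z\|\le K_2\,(k_n/k_m)^{-(\chi_1+\tilde\ve)}(k_n\bar k_n)^{\bar\ga(\psi,\bar\psi)+\tilde\ve}\|z\|$, which is the second line of \eqref{deeqbl} with $b=\chi_1+\tilde\ve$, $\nu_n=k_n\bar k_n$.

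The main obstacle I anticipate is purely bookkeeping rather than conceptual: one must be careful that the $\limsup$ definitions in \eqref{eqvpa} only control tails, so the bounds $\|C_{m-1}\cdots C_1\da_i\|\le h_m^{\la_r+\tilde\ve}$ hold only for $m$ large; to get a clean statement valid for all $m\ge n\ge1$ one enlarges the constant $K$ to cover the finitely many small indices, and — because $h_m,\bar h_m\to$ their limits only as $m\to\pm\iy$ — one has to check that this enlargement is finite, which it is since each $\|C_{m-1}\cdots C_1\da_i\|$ is a fixed finite number for fixed $m$. A second delicate point is the passage from an arbitrary $y\in E$ to the basis expansion: one needs the change-of-basis constant relating $\|y\|$ and $\max_i|c_i(y)|$, which is finite and fixed (it depends only on the chosen bases), and gets absorbed into $K$. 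Finally, one should note that the freedom in $\tilde\ve$ enters in three places ($a$, $b$, and $\ve$) and that choosing it small is legitimate precisely because $\la_r<0<\chi_1$ strictly, so $a<0\le b$ is preserved; this is where the hypothesis $\la_r<0<\chi_1$ is essential.
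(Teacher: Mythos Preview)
Your proposal is correct and follows essentially the same route as the paper. The paper also reduces to the two blocks and factors the stable transition as $\mc(m,n)=(C_{m-1}\cdots C_1)\big[(C_1^T\cdots C_{n-1}^T)^{-1}\big]^T$, then bounds the entries using the Lyapunov exponent definitions \eqref{eqvpa} exactly as you outline; the unstable estimate is handled symmetrically, and the bookkeeping issues you flag (enlarging $K$ to cover small indices, absorbing the change-of-basis constant) are handled implicitly.

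The only minor difference is in the choice of basis: rather than selecting near-optimal dual bases $\da_i,\bar\da_i$ as you propose, the paper works directly with the \emph{columns} $\hat y^j_{m-1}$ of $C_{m-1}\cdots C_1$ and $\check y^j_{n-1}$ of $(C_1^T\cdots C_{n-1}^T)^{-1}$, observes that these are automatically dual (since $[(C_1^T\cdots C_{m-1}^T)^{-1}]^T(C_{m-1}\cdots C_1)=\id$), and records that $\ga(\vp,\bar\vp)=\max_j\{\hat m_j+\check n_j\}$ for this particular pair. This saves the step of optimizing over dual bases but is otherwise the same computation. One small correction to your chain of inequalities: do not replace $\vp(\da_i)$ by $\la_r$ before splitting off the $(h_m/h_n)$ factor---keep the individual exponent, write $h_m^{\vp(\da_i)+\tilde\ve}=(h_m/h_n)^{\vp(\da_i)+\tilde\ve}h_n^{\vp(\da_i)+\tilde\ve}$, bound $(h_m/h_n)^{\vp(\da_i)+\tilde\ve}\le(h_m/h_n)^{\la_r+\tilde\ve}$, and only then combine $h_n^{\vp(\da_i)+\tilde\ve}\bar h_n^{\bar\vp(\bar\da_i)+\tilde\ve}$ into the $\mu_n$ factor; this is exactly how the paper orders the steps.
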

\begin{proof}
Let $\hat{m}_j=\vp(\hat{y}^j_1)$ and
$\check{n}_j=\bar{\vp}(\check{y}^j_1)$ for $j=1,\cdots,l$, where
$\hat{y}^1_{m-1},\cdots,\hat{y}^l_{m-1}$ are the columns of
$C_{m-1}\cdots C_1$ and
$\check{y}^1_{m-1},\cdots,\check{y}^l_{m-1}$ are the columns of
$(C_1^T\cdots C_{m-1}^T)^{-1}$. By \eqref{eqvpa}, for any
$\tilde{\ve}>0$, it is not difficult to show that there exists a
sufficiently large $\bar{K}_1$ such that
\begin{equation}\label{existenceproofa}
\|\hat{y}^j_{m-1}\|\leq
\bar{K}_1h_m^{\hat{m}_j+\tilde{\ve}}~~\mbox{and}~~\|\check{y}^j_{m-1}\|\leq
\bar{K}_1\bar{h}_m^{\check{n}_j+\tilde{\ve}}
\end{equation}
for $m\in\N$ and $j=1,\cdots,l$. Moreover,  from
$(\hat{y}^i_{m-1},\check{y}^j_{m-1})=\og_{ij}$ and
$$\left[(C_1^T\cdots C_{m-1}^T)^{-1}\right]^T(C_{m-1}\cdots
C_1)=\id,$$ it follows that
$\ga(\vp,\bar{\vp})=\max\{\hat{m}_j+\check{n}_j:j=1,\cdots,l\}.$
For $m\geq n$, let
\begin{align*}
\mc(m,n)&=C_{m-1}\cdots C_n=C_{m-1}\cdots C_nC_{n-1}\cdots
C_1(C_{n-1}\cdots C_1)^{-1}\\
&=(C_{m-1}\cdots C_1)\left[(C_1^T\cdots C_{n-1}^T)^{-1}\right]^T
\end{align*}
and $c_{i\tau}(m,n)=\sum\limits_{j=1}^l
\hat{y}^{ij}_{m-1}\check{y}^{\tau j}_{n-1}$ be the entries of
$\mc(m,n)$. By \eqref{existenceproofa}, one has
\begin{align*}
|c_{i\tau}(m,n)|&\leq\sum\limits_{j=1}^l
|\hat{y}^{ij}_{m-1}||\check{y}^{\tau
j}_{n-1}|\leq\sum\limits_{j=1}^l
\|\hat{y}^{j}_{m-1}\|\|\check{y}^{j}_{n-1}\|\\
&\leq\sum\limits_{j=1}^l\bar{K}_1^2h_m^{\hat{m}_j+\tilde{\ve}}\bar{h}_n^{\check{n}_j+\tilde{\ve}}\\
&\leq
\sum\limits_{j=1}^l\bar{K}_1^2(h_m/h_n)^{\hat{m}_j+\tilde{\ve}}h_n^{\hat{m}_j+\tilde{\ve}}
\bar{h}_n^{\check{n}_j+\tilde{\ve}}\\
&\leq\bar{K}_1^2
l(h_m/h_n)^{\la_r+\tilde{\ve}}(h_n\bar{h}_n)^{\ga(\vp,\bar{\vp})+\tilde{\ve}}
\end{align*}
and
\begin{align*}
\|\mc(m,n)\xi\|^2&=\left\|\sum\limits_{i=1}^l\sum\limits_{\tau=1}^ll_\tau
c_{i\tau}(m,n)e_i\right\|^2\\&
\leq\sum\limits_{i=1}^l\left(\sum\limits_{\tau=1}^ll_\tau^2\sum\limits_{\tau=1}^lc_{i\tau}(m,n)^2\right)
\leq\sum\limits_{i=1}^l\sum\limits_{\tau=1}^lc_{i\tau}(m,n)^2,
\end{align*}
where $\xi=\sum\limits_{\tau=1}^ll_\tau e_\tau$ with
$\|\xi\|^2=\sum\limits_{\tau=1}^ll_\tau^2=1$ and
$\{e_1,\cdots,e_l\}$ is the standard orthogonal basis of $E$.
Therefore,
\begin{align*}
\|\mc(m,n)\|&\leq\left(\sum\limits_{i=1}^l\sum\limits_{\tau=1}^lc_{i\tau}(m,n)^2\right)^{1/2}\\
&\leq\bar{K}_1^2l^2(h_m/h_n)^{\la_r+\tilde{\ve}}(h_n\bar{h}_n)^{\ga(\vp,\bar{\vp})+\tilde{\ve}}
\leq\bar{K}_1^2l^2(h_m/h_n)^{a}\mu_n^{\ve}.
\end{align*}
Proceeding similarly to the above arguments, we conclude that
there exists a constant $\bar{K}_2$ such that
\begin{align*}
\|\md(m,n)\|&=\|D_m^{-1}\cdots D_{n-1}^{-1}\|\\
&\leq
\bar{K}_2^2(n-l)^2(k_n/k_m)^{-(\chi_1+\tilde{\ve})}(k_n\bar{k}_n)^{\bar{\ga}(\psi,\bar{\psi})+\tilde{\ve}}\\
&\leq\bar{K}_2^2(n-l)^2(k_n/k_m)^{-b}\nu_n^{\ve}.
\end{align*}
The proof is complete.
\end{proof}

In the above discussion, a relatively strong assumption is that
$A_m$ is of block form. In fact, we can also establish the
existence of nonuniform $(h,k,\mu.\nu)$-dichotomies for more
general sequences of linear operators. For example, let
\[
\Aa(m,n)=\begin{cases}
\Aa_{m-1}\cdots \Aa_n, &\text{if}\ m>n,\\
\id, &\text{if}\ m=n,\\
\Aa_m^{-1}\cdots\Aa_{n-1}^{-1},&\text{if}\ m<n
\end{cases}
\]
and
\[
\Bb(m,n)=\begin{cases}
\Bb_{m-1}\cdots \Bb_n, &\text{if}\ m>n,\\
\id, &\text{if}\ m=n,\\
\Bb_m^{-1}\cdots\Bb_{n-1}^{-1},&\text{if}\ m<n,
\end{cases}
\]
where $\{\Aa_m\}_{m\in\Z},\{\Bb_m\}_{m\in\Z}\subset\B(X)$ are
invertible matrix sequences and $\Bb_m$ have a block form.
$\Aa(m,n)$ is said to be \emph{reducible} if there exist a
sequence of invertible matrixes $\{\widetilde{S}_m\}_{m\in\Z}$ and
a constant $\widetilde{M}>0$ such that
$$
\Aa_m\widetilde{S}_{m}=\widetilde{S}_{m+1}\Bb_m,~~\|S_m\|\leq\widetilde{M},~~\|S^{-1}_m\|\leq\widetilde{M}.
$$
It is not difficult to show that if $\Aa(m,n)$ is reducible and
$\Bb(m,n)$ admits a nonuniform $(h,k,\mu,\nu)$-dichotomy, then
$\Aa(m,n)$  also admits a nonuniform $(h,k,\mu,\nu)$-dichotomy.

In Theorem \ref{thhaha}, we note that a sequence of linear
operators admit a nonuniform $(h,k,\mu,\nu)$-dichotomy if the
Lyapunov exponents are negative in $E$ while all Lyapunov
exponents are positive in $F$. This is a rather weaker
assumptions. It also shows that the nonuniform
$(h,k,\mu,\nu)$-dichotomy should exist widely in the sequence of
linear operators and occur naturally.

\section{Linear perturbations: roughness}\label{robustnesssection}
\noindent In this section, we consider the roughness or robustness
problem for difference equations defined by a sequence of linear
operators in a Banach space, or equivalently for a nonautonomous
dynamics with discrete time. The principal aim is to show that the
$(h,k,\mu,\nu)$-dichotomy defined in Section
\ref{existencesection} persists under sufficiently small linear
perturbations of the original dynamics.  In particular, we
establish parameter dependence of robustness or roughness of the
nonuniform $(h,k,\mu,\nu)$-dichotomy in a Banach space $X$ and
show that the stable and unstable subspaces of nonuniform
$(h,k,\mu,\nu)$-dichotomies for the linear perturbed system are
Lipschitz continuous in the parameters.

Let $Y= (Y,|\cdot|)$ be an open subset of a Banach space (the
parameter space) and consider the nonautonomous dynamics with
discrete time
\begin{equation}\label{eqlinear}
x_{m+1}=A_mx_m,\quad m\in\Z
\end{equation}
and the linear perturbed system with parameters
\begin{equation}\label{eqpllinear}
x_{m+1}=(A_m+B_m(\la))x_m,
\end{equation}
where $B_m: Y\ra \B(X)$ are invertible. For each $\la\in Y$,
define
\[
\mab_\la(m,n)=\begin{cases}
(A_{m-1}+B_{m-1}(\la))\cdots (A_n+B_n(\la)) &\mbox{if}\ m>n,\\
\id &\mbox{if} \ m=n, \\
(A_m+B_m(\la))^{-1} \cdots (A_{n-1}+B_{n-1}(\la))^{-1} &\mbox{if}\
m<n.
\end{cases}
\]

\begin{theorem}\label{thaaa}
Assume that
\begin{itemize}

\item[\tu{(a$_1$)}] $\{A_m\}_{m\in\Z}$  admits a nonuniform
$(h,k,\mu,\nu)$-dichotomy;

\item[\tu{(a$_2$)}] there exist positive constants $c>0$ and
$\og>1$ such that, for any $\la,\la_1,\la_2\in Y$,
$$
\ba{c} \|B_m(\la)\|\leq c
\min\{(h_{m+1}/h_m)^a\mu_{|m+1|}^{-\og-\ve},\nu_{|m+1|}^{-\og-\ve}\},\\
\|B_m(\la_1)-B_m(\la_2)\|\leq c|\la_1-\la_2|\cdot
\min\{(h_{m+1}/h_m)^a\mu_{|m+1|}^{-\og-\ve},\nu_{|m+1|}^{-\og-\ve}\};
\ea
$$

\item[\tu{(a$_3$)}] $\lim\limits_{m\ra\iy}k_m^{-b}\nu_{|m|}^\ve=0$
and $\lim\limits_{m\ra-\iy}h_m^{-a}\mu_{|m|}^\ve=0$;

\item[\tu{(a$_4$)}] there are positive constants $N_1$ and $N_2$
such that for each $m\in\Z$
$$
\sum\limits_{\tau=-\iy}^{m-1}\mu_{|\tau+1|}^{-\og}\nu_{|m|}^\ve\leq
N_1,~~~~
\sum\limits_{\tau=m}^\iy\mu_{|m|}^\ve\nu_{|\tau+1|}^{-\og}\leq
N_2.
$$
\end{itemize}
If
\begin{equation}\label{eqthyangmi}
c<[K(2K+1)(N_1+N_2)]^{-1},
\end{equation}
then the sequence of linear operators $\{A_m+B_m(\la)\}_{m\in\Z}$
also admits a nonuniform $(h,k,\mu,\nu)$-dichotomy, i.e., for each
$\la\in Y$, there exist projections $\wpt_n(\la)$ such that
\begin{equation}\label{invarianteo}
\wpt_m(\la)\mab_\la(m,n)=\mab_\la(m,n)\wpt_n(\la)
\end{equation}
and
\begin{equation}\label{boundes}
\begin{split}
&\|\mab_\la(m,n)\wpt_n(\la)\|\leq
\f{K\wkt}{1-2K\wkt c(N_1+N_2)}(h_m/h_n)^a\mu_{|n|}^{\ve}(\mu_{|n|}^\ve+\nu_{|n|}^\ve),m\geq n,\\
&\|\mab_\la(m,n)\wqt_n(\la)\|\leq \f{K\wkt}{1-2K\wkt
c(N_1+N_2)}(k_n/k_m)^{-b}\nu_{|n|}^{\ve}(\mu_{|n|}^\ve+\nu_{|n|}^\ve),n\geq
m,
\end{split}
\end{equation}
where $\wqt_n(\la)=\id-\wpt_n(\la)$ are the complementary
projections of $\wpt_n(\la)$ and
\begin{equation}\label{eqthaaa}
\wkt=K/(1-Kc(N_1+N_2)).
\end{equation}
Moreover, if $Y$ is finite-dimensional, then the stable subspace
$\wpt_n(\la)(X)$ and the unstable subspace $\wqt_n(\la)(X)$ are
Lipschitz continuous in $\la$.
\end{theorem}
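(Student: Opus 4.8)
The plan is to follow the classical roughness scheme (as in Barreira--Valls) adapted to the four growth-rate framework, constructing the perturbed projections via bounded solutions and then tracking parameter dependence through the contraction used to build them. First I would fix $\la\in Y$ and look for the subspace of initial conditions at a base point $n$ giving solutions of \eqref{eqpllinear} that remain bounded (in the appropriate weighted sense) as $m\to+\iy$; this is done by writing the variation-of-constants formula
\[
x_m=\mab_\la(m,n)x_n+\sum_{\tau=n}^{m-1}\mab_\la(m,\tau+1)B_\tau(\la)\ma(\tau+1,\tau)\cdots
\]
more precisely, by solving the fixed-point equation
\[
x_m=\ma(m,n)P_n\xi+\sum_{\tau=n}^{m-1}\ma(m,\tau+1)P_{\tau+1}B_\tau(\la)x_\tau-\sum_{\tau=m}^{\iy}\ma(m,\tau+1)Q_{\tau+1}B_\tau(\la)x_\tau
\]
on a space of sequences with norm $\sup_m\|x_m\|(h_m/h_n)^{-a}\mu_{|n|}^{-\ve}$ or similar. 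Hypotheses (a$_2$) and (a$_4$) make the operator a contraction with constant $Kc(N_1+N_2)<1$ by \eqref{eqthyangmi}, giving a unique bounded solution $x_m=x_m(\xi,\la)$ depending linearly on $\xi\in P_n(X)$; symmetrically one solves backward for the unstable directions. The new projection $\wpt_n(\la)$ is defined so that its image is exactly the set of $x_n$ arising this way, and \eqref{invarianteo} is immediate from uniqueness of bounded solutions together with the cocycle identity. The bounds \eqref{boundes} follow from the fixed-point estimates, with $\wkt$ as in \eqref{eqthaaa} coming from the geometric-series bound $1/(1-Kc(N_1+N_2))$; conditions (a$_3$) are what guarantee the constructed $\wpt_n(\la),\wqt_n(\la)$ are genuinely complementary projections (the weighted-bounded stable and unstable solution spaces intersect trivially and span $X$).

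For the final assertion — Lipschitz dependence of $\wpt_n(\la)(X)$ and $\wqt_n(\la)(X)$ on $\la$ when $Y$ is finite-dimensional — I would argue as follows. The stable subspace is the graph of a linear map; concretely, one shows $\wpt_n(\la)(X)=\{\xi+U_n(\la)\xi:\xi\in P_n(X)\}$ where $U_n(\la)\colon P_n(X)\to Q_n(X)$ is obtained from the backward-in-$\tau$ part of the fixed point evaluated at $m=n$, i.e.
\[
U_n(\la)\xi=-\sum_{\tau=n}^{\iy}\ma(n,\tau+1)Q_{\tau+1}B_\tau(\la)x_\tau(\xi,\la).
\]
Then the Lipschitz estimate $\|U_n(\la_1)-U_n(\la_2)\|\le L|\la_1-\la_2|$ reduces to estimating $\|x_\tau(\xi,\la_1)-x_\tau(\xi,\la_2)\|$, which one gets by subtracting the two fixed-point equations: the difference satisfies a perturbed fixed-point equation whose inhomogeneous term is controlled using the second inequality in (a$_2$) (the $\|B_m(\la_1)-B_m(\la_2)\|\le c|\la_1-\la_2|\cdots$ bound) and whose linear part is the same contraction, so $\|x_\tau(\xi,\la_1)-x_\tau(\xi,\la_2)\|\lesssim |\la_1-\la_2|\,\|\xi\|$ times the same weight. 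Feeding this back through the series defining $U_n$, with (a$_4$) again supplying summability, yields the Lipschitz bound on $U_n(\cdot)$, hence on the subspace in the natural metric on the Grassmannian (e.g. the gap/aperture metric, or Hausdorff distance of unit spheres), which in finite dimension is equivalent to $\|U_n(\la_1)-U_n(\la_2)\|$. The unstable subspace is handled by the mirror-image argument using the forward part and the second estimate in (a$_4$).

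The step I expect to be the main obstacle is the bookkeeping that makes the contraction actually close — specifically, verifying that with the \emph{asymmetric} weights coming from four distinct growth rates $h,k,\mu,\nu$ (rather than the usual single exponential), the kernel estimates
\[
\|\ma(m,\tau+1)P_{\tau+1}B_\tau(\la)\|\le Kc\,(h_m/h_{\tau+1})^a(h_{\tau+1}/h_\tau)^a\mu_{|\tau+1|}^{-\og}
\]
and its unstable counterpart, when summed against the chosen weight, collapse exactly to $Kc N_1$ and $Kc N_2$ via (a$_4$). Getting the weight in the sequence space right so that \emph{both} the self-consistency estimate and the output bounds \eqref{boundes} come out with the stated constants (note the extra factor $\mu_{|n|}^\ve+\nu_{|n|}^\ve$, which signals that the natural norm mixes the stable and unstable weights) is delicate; once the right space is identified, the Lipschitz-in-$\la$ part is a routine repetition of the same estimates with an extra factor $|\la_1-\la_2|$ and presents no essential difficulty beyond the finite-dimensionality remark needed to pass from the graph map to the subspace metric.
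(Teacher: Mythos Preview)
Your plan is essentially the paper's own: the paper also builds weighted-bounded forward/backward solutions via the very fixed-point equation you wrote (at the operator level $U_\la(m,n),V_\la(m,n)$ rather than for individual vectors, but this is cosmetic), with contraction constant $Kc(N_1+N_2)$ coming from (a$_2$) and (a$_4$), and then obtains the Lipschitz dependence by subtracting the two fixed-point equations exactly as you describe. One small correction of emphasis: condition (a$_3$) is used not to prove complementarity per se but to justify passing to the limit in the variation-of-constants identity (so that every weighted-bounded solution necessarily satisfies your fixed-point equation); complementarity in the paper comes from showing $S_0(\la)=\pt_0(\la)+\qtt_0(\la)$ is invertible via the smallness bound \eqref{eqthyangmi}, which is the operator-level version of your graph argument, and the conjugation by $S_0$ is also what produces the extra factor $\mu_{|n|}^\ve+\nu_{|n|}^\ve$ you noticed in \eqref{boundes}.
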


In the following discussion of this section, we assume that the
conditions in Theorem \ref{thaaa} are always satisfied and the
proof of Theorem \ref{thaaa} will be completed in several steps.

For each $n\in\Z$, define
\begin{align*}
\Og_1&:=\{U(m,n)_{m\geq n}\subset\B(X):\|U\|_1<\iy\},\\
\Og_2&:=\{V(m,n)_{n\geq m}\subset\B(X):\|V\|_2<\iy\},
\end{align*}
with the norms
\begin{align*}
&\|U\|_1=\sup\left\{\|U(m,n)\|(h_m/h_n)^{-a}\mu_{|n|}^{-\ve}:m\geq
n\right\},\\
&\|V\|_2=\sup\left\{\|V(m,n)\|(k_n/k_m)^b\nu_{|n|}^{-\ve}:m\leq
n\right\},
\end{align*}
respectively. Then $(\Og_1,\|\cdot\|_1)$ and $(\Og_2,\|\cdot\|_2)$
are Banach spaces.

\begin{lemma}\label{leaaa1}
For each $\la\in Y$ and $n\in\Z$, there exists a unique solution
$U_\la\in\Og_1$ of \eqref{eqpllinear} satisfying
\begin{equation}\label{eqzhubajie}
\begin{split}
U_\la(m,n)&=\ma(m,n)P_n+\sum_{\tau=n}^{m-1}\ma(m,\tau+1)P_{\tau+1}B_\tau (\la)U(\tau,n)\\
&\quad-\sum\limits_{\tau=m}^\iy \ma(m,\tau+1)Q_{\tau+1}B_\tau(\la)
U(\tau,n)
\end{split}
\end{equation}
and $U_\la(m,\sigma)U_\la(\sg,n)=U_\la(m,n)$ for $m\geq\sg\geq n$.
Moreover, $U_{\la}$ is Lipschitz continuous in $\la$.
\end{lemma}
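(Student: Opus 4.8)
The plan is to set up a fixed-point argument in the Banach space $(\Og_1,\|\cdot\|_1)$. First I would define the operator $\mathcal{T}_\la$ on $\Og_1$ by
\[
(\mathcal{T}_\la U)(m,n)=\ma(m,n)P_n+\sum_{\tau=n}^{m-1}\ma(m,\tau+1)P_{\tau+1}B_\tau(\la)U(\tau,n)-\sum_{\tau=m}^\iy\ma(m,\tau+1)Q_{\tau+1}B_\tau(\la)U(\tau,n),
\]
and check that $\mathcal{T}_\la$ maps $\Og_1$ into itself. For this I would insert the dichotomy bounds \eqref{deeqbl} on $\ma(m,\tau+1)P_{\tau+1}$ and $\ma(m,\tau+1)Q_{\tau+1}$ and the perturbation bound (a$_2$) on $B_\tau(\la)$; the factors $(h_{m+1}/h_m)^a$ telescope against the $(h_m/h_\tau)^a$-type factors, and the residual sums over $\mu_{|\tau+1|}^{-\og}$ and $\nu_{|\tau+1|}^{-\og}$ are controlled by hypothesis (a$_4$) via $N_1,N_2$. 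The upshot is $\|\mathcal{T}_\la U\|_1\leq K+K c(N_1+N_2)\|U\|_1$, so $\mathcal{T}_\la$ leaves invariant a suitable closed ball once $c(N_1+N_2)<1$ (which follows from \eqref{eqthyangmi}). The same estimate applied to a difference $\mathcal{T}_\la U-\mathcal{T}_\la V$ gives $\|\mathcal{T}_\la U-\mathcal{T}_\la V\|_1\leq Kc(N_1+N_2)\|U-V\|_1$, a contraction; hence a unique fixed point $U_\la\in\Og_1$, which by construction satisfies \eqref{eqzhubajie} and the bound $\|U_\la\|_1\leq K/(1-Kc(N_1+N_2))=\wkt$. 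One must also verify that $U_\la$ actually solves \eqref{eqpllinear}: this is a direct computation shifting the index $m\to m+1$ in \eqref{eqzhubajie}, peeling off the $\tau=m$ term from the second sum, and using $A_m+B_m(\la)=\mab_\la(m+1,m)$ together with $P_{m+1}+Q_{m+1}=\id$.

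Next I would establish the cocycle identity $U_\la(m,\sg)U_\la(\sg,n)=U_\la(m,n)$ for $m\geq\sg\geq n$. The clean way is to fix $\sg,n$ and note that both $m\mapsto U_\la(m,\sg)U_\la(\sg,n)$ and $m\mapsto U_\la(m,n)$ are solutions of \eqref{eqpllinear} on $m\geq n$ with the same value at $m=\sg$; then observe that each lies in $\Og_1$ (the first because $U_\la(\sg,n)$ is a fixed bounded operator and $U_\la(\cdot,\sg)\in\Og_1$), and invoke the uniqueness part already proved — the integral equation \eqref{eqzhubajie} has a unique solution in $\Og_1$ for each initial index, so two $\Og_1$-solutions agreeing at one index must coincide. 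Alternatively one substitutes $U_\la(m,\sg)U_\la(\sg,n)$ into the right-hand side of \eqref{eqzhubajie} and checks it reproduces itself after splitting $\sum_{\tau=n}^{m-1}=\sum_{\tau=n}^{\sg-1}+\sum_{\tau=\sg}^{m-1}$ and using the identity for $U_\la(\sg,n)$; this is more computational but self-contained.

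Finally, for Lipschitz dependence on $\la$, I would estimate $\|U_{\la_1}-U_{\la_2}\|_1$. Writing $U_{\la_i}=\mathcal{T}_{\la_i}U_{\la_i}$ and subtracting, the difference splits into a term with $B_\tau(\la_1)-B_\tau(\la_2)$ acting on $U_{\la_1}$ and a term with $B_\tau(\la_2)$ acting on $U_{\la_1}-U_{\la_2}$. The first is bounded using the Lipschitz estimate in (a$_2$), hypotheses (a$_4$), and $\|U_{\la_1}\|_1\leq\wkt$, yielding a bound of the form $Kc\wkt(N_1+N_2)|\la_1-\la_2|$; the second is bounded by $Kc(N_1+N_2)\|U_{\la_1}-U_{\la_2}\|_1$. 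Solving the resulting inequality gives
\[
\|U_{\la_1}-U_{\la_2}\|_1\leq\frac{Kc\wkt(N_1+N_2)}{1-Kc(N_1+N_2)}\,|\la_1-\la_2|,
\]
which is the claimed Lipschitz continuity. The main obstacle throughout is bookkeeping the interaction of the four growth rates $h,k,\mu,\nu$ and the signs of $a$ and $b$ in the sums — making sure the telescoping of the $h$- and $k$-factors is legitimate (this is where $a<0\leq b$ and the absolute values $|\tau+1|$ in the nonuniform factors matter) and that the two series in (a$_4$) are exactly what controls the two sums appearing after the estimates; once the correct grouping $(h_m/h_\tau)^a\mu_{|\tau|}^{\ve}\cdot\mu_{|\tau+1|}^{-\og}$ is identified, everything reduces to applying (a$_4$).
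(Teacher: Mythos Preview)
Your fixed-point setup, contraction estimate, and Lipschitz argument are exactly the paper's approach (the paper calls your $\mathcal{T}_\la$ by $J_1^\la$), and the bounds you state match theirs line for line.

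There is one genuine gap, in your primary argument for the cocycle identity. You claim that $m\mapsto U_\la(m,\sg)U_\la(\sg,n)$ and $m\mapsto U_\la(m,n)$ agree at $m=\sg$, but at that index the first map takes the value $U_\la(\sg,\sg)U_\la(\sg,n)$, and from \eqref{eqzhubajie} one has
\[
U_\la(\sg,\sg)=P_\sg-\sum_{\tau=\sg}^{\iy}\ma(\sg,\tau+1)Q_{\tau+1}B_\tau(\la)U_\la(\tau,\sg),
\]
which is \emph{not} the identity. So the two sequences are not known to coincide at $m=\sg$, and the uniqueness-of-solutions argument is circular: the equality $U_\la(\sg,\sg)U_\la(\sg,n)=U_\la(\sg,n)$ is precisely the cocycle identity at $m=\sg$. (There is also the minor issue that $U_\la(m,\sg)$ is only defined for $m\ge\sg$, so the composite is not a priori a sequence on $m\ge n$.)

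Your alternative route is the right one and is what the paper does: substitute $U_\la(m,\sg)U_\la(\sg,n)$ using \eqref{eqzhubajie} for $U_\la(\cdot,\sg)$, then expand $\ma(m,\sg)P_\sg U_\la(\sg,n)$ via the $P$-part of \eqref{eqzhubajie} for $U_\la(\sg,n)$; subtracting $U_\la(m,n)$ leaves the difference $L_\la(m,\sg)=U_\la(m,\sg)U_\la(\sg,n)-U_\la(m,n)$ as a fixed point of the \emph{homogeneous} operator
\[
(H_1^\la l)(m,\sg)=\sum_{\tau=\sg}^{m-1}\ma(m,\tau+1)P_{\tau+1}B_\tau(\la)l(\tau,\sg)-\sum_{\tau=m}^{\iy}\ma(m,\tau+1)Q_{\tau+1}B_\tau(\la)l(\tau,\sg),
\]
which is a contraction on $\Og_1^\sg$ with unique fixed point $0$. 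This closes the argument.
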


\begin{proof}
It is trivial to show that $U_\la(m,n)_{m\geq n}$ satisfying
\eqref{eqzhubajie} is a solution of \eqref{eqpllinear}. For each
$\la\in Y$, define the operator $J_1^\la$ on $\Og_1$ by
\begin{align*}
(J_1^\la
U)(m,n)&=\ma(m,n)P_n+\sum\limits_{\tau=n}^{m-1}\ma(m,\tau+1)P_{\tau+1}B_\tau(\la)
U(\tau,n)\\&\quad-\sum\limits_{\tau=m}^\iy
\ma(m,\tau+1)Q_{\tau+1}B_\tau (\la)U(\tau,n).
\end{align*}
We will show that $J_1^\la$ has a unique fixed point in $\Og_1$.
In fact, for $m\geq n$, one has
\begin{align*}
A^1_\la:&=\sum\limits_{\tau=n}^{m-1}\|\ma(m,\tau+1)P_{\tau+1}\|\|B_\tau(\la)\|\|U(\tau,n)\|\\
&\quad+\sum\limits_{\tau=m}^\iy
\|\ma(m,\tau+1)Q_{\tau+1}\|\|B_\tau(\la)\|\|U(\tau,n)\|\\
&\leq
Kc(h_m/h_n)^a\mu_{|n|}^\ve\sum\limits_{\tau=n}^{m-1}\mu_{|\tau+1|}^{-\og}
\|U\|_1+Kc(h_m/h_n)^{a}\mu_{|n|}^\ve\sum\limits_{\tau=m}^\iy\nu_{|\tau+1|}^{-\og}
\|U\|_1\\
&\leq Kc(N_1+N_2)(h_m/h_n)^a\mu_{|n|}^\ve\|U\|_1.
\end{align*}
Then,
\begin{align*}
\|(J_1^\la U)(m,n)\|&\leq K(h_m/h_n)^a\mu_{|n|}^\ve+A^1_\la\leq
K(h_m/h_n)^a\mu_{|n|}^\ve\\
&\quad+Kc(N_1+N_2)(h_m/h_n)^a\mu_{|n|}^\ve\|U\|_1
\end{align*}
and
\begin{equation}\label{leeqfangshuan}
\|J_1^\la U\|_1\leq K+Kc(N_1+N_2)\|U\|_1<\iy.
\end{equation}
Hence, $J_1^\la U$ is well-defined and $J_1^\la:\Og_1\ra\Og_1$.
Moreover, for each $\la\in Y$, $U_1,U_2\in\Og_1$, and $m\geq n$,
define
$$
A^2_\la:=\sum\limits_{\tau=n}^{m-1}\|\ma(m,\tau+1)P_{\tau+1}\|\|B_\tau(\la)\|\|U_1(\tau,n)-U_2(\tau,n)\|
$$
and
$$
A^3_\la:=\sum\limits_{\tau=m}^\iy\|\ma(m,\tau+1)Q_{\tau+1}\|\|B_\tau(\la)\|\|U_1(\tau,n)-U_2(\tau,n)\|.
$$
Then
$$
A^2_\la+A^3_\la\leq
Kc(N_1+N_2)(h_m/h_n)^a\mu_{|n|}^\ve\|U_1-U_2\|_1
$$
Whence,
\begin{align*}
\|(J_1^\la U_1)(m,n)-(J_1^\la U_2)(m,n)\|&\leq
A^2_\la+A^3_\la\\&\leq
Kc(N_1+N_2)(h_m/h_n)^a\mu_{|n|}^\ve\|U_1-U_2\|_1
\end{align*}
and
$$
\|J_1^\la U_1-J_1^\la U_2\|_1\leq Kc(N_1+N_2)\|U_1-U_2\|_1.
$$
If \eqref{eqthyangmi} holds, then the operator $J_1^\la$ is a
contraction and there exists a unique $U_\la\in\Og_1$ such that
$J_1^\la U_\la =U_\la.$ Therefore,  \eqref{eqzhubajie} holds.

By \eqref{eqzhubajie}, one has
\begin{align*}
U_\la(m,\sg)U_\la(\sg,n)&=\ma(m,n)P_n+\sum\limits_{\tau=n}^{\sg-1} \ma(m,\tau+1)P_{\tau+1}B_\tau(\la) U_\la(\tau,n)\\
&\quad+\sum\limits_{\tau=\sg}^{m-1}\ma(m,\tau+1)P_{\tau+1}B_\tau(\la)
U_\la(\tau,\sg) U_\la(\sg,n)\\&\quad-\sum\limits_{\tau=m}^\iy
\ma(m,\tau+1)Q_{\tau+1}B_\tau(\la) U_\la(\tau,\sg)U_\la(\sg,n).
\end{align*}
Let $L_\la(m,\sg)=U_\la(m,\sg)U_\la(\sg,n)-U_\la(m,n)$ for $m\geq
\sg\geq n$. For $l\in\Og_1^\sg$ (here $\Og_1^\sg$ is $\Og_1$ with
$n$ replaced by $\sg$) , $m\geq\sg$, and $\la\in Y$, define the
operator $H_1^\la$ by
\begin{align*}
(H_1^\la l)(m,\sg)
&=\sum\limits_{\tau=\sg}^{m-1}\ma(m,\tau+1)P_{\tau+1}B_\tau(\la)
l(\tau,\sg)\\&\quad-\sum_{\tau=m}^\iy
\ma(m,\tau+1)Q_{\tau+1}B_\tau(\la) l(\tau,\sg).
\end{align*}
It follows that
\begin{align*}
\|(H_1^\la l)(m,\sg)\|&\leq
Kc(h_m/h_n)^a\mu_{|n|}^\ve\sum\limits_{\tau=n}^{m-1}\mu_{|\tau+1|}^{-\og}
\|l\|_1\\&\quad+Kc(h_m/h_n)^{a}\mu_{|n|}^\ve\sum\limits_{\tau=m}^\iy\nu_{|\tau+1|}^{-\og}
\|l\|_1\\
&\leq Kc(N_1+N_2)(h_m/h_n)^a\mu_{|n|}^\ve\|l\|_1
\end{align*}
and
\begin{align*}
\|(H_1^\la l_1)(m,\sg)-(H_1^\la l_2)(m,\sg)\|&\leq
Kc(h_m/h_n)^a\mu_{|n|}^\ve\sum\limits_{\tau=n}^{m-1}\mu_{|\tau+1|}^{-\og}
\|l_1-l_2\|_1\\
&\quad+Kc(h_m/h_n)^{a}\mu_{|n|}^\ve\sum\limits_{\tau=m}^\iy\nu_{|\tau+1|}^{-\og}
\|l_1-l_2\|_1\\
&\leq Kc(N_1+N_2)(h_m/h_n)^a\mu_{|n|}^\ve\|l_1-l_2\|_1
\end{align*}
for $l,l_1,l_2\in\Og_1^\sg$.  Then
$$
\|H_1^\la l\|_1\leq Kc(N_1+N_2)\|l\|_1<\iy
$$
and
$$
\|H_1^\la l_1-H_1^\la l_2\|_1\leq Kc(N_1+N_2)\|l_1-l_2\|_1.
$$
Therefore, $H_1^\la$ is well-defined, $H_1^\la(\Og_1^\sg)\subset
\Og_1^\sg$, and there exists a unique $l_\la\in\Og_1^\sg$ such
that $H_1^\la l_\la=l_\la$. Moreover, it is not difficult to show
that $0\in\Og_1^\sg$ and $H_1^\la0=0$. On the other hand, it is
clear that $H_1^\la L_\la=L_\la$. Whence $L_\la=l_\la=0$.

It is time to show that $U_{\la}$ is Lipschitz continuous in
$\la$. It is clear that, for any $\la_1,\la_2\in Y$, there exist
bounded solutions $U_{\la_1},U_{\la_2}\in\Og_1$ satisfying
\eqref{eqzhubajie}. By (a$_2$) and \eqref{leeqfangshuan}, we have
\begin{align*}
A^4(\tau):&=\|B_\tau(\la_1)U_{\la_1}(\tau,n)-B_\tau(\la_2)U_{\la_2}(\tau,n)\|\\
&\leq\|B_\tau(\la_1)U_{\la_1}(\tau,n)-B_\tau(\la_1)U_{\la_2}(\tau,n)\|\\&\quad+
\|B_\tau(\la_1)U_{\la_2}(\tau,n)-B_\tau(\la_2)U_{\la_2}(\tau,n)\|\\
&\leq
c(h_{\tau+1}/h_n)^a\mu_{|\tau+1|}^{-\og-\ve}\mu_{|n|}^\ve(\|U_{\la_1}-U_{\la_2}\|_1+\wkt
|\la_1-\la_2|)
\end{align*}
for any $\tau\geq n$. It follows from \eqref{eqzhubajie} that
\begin{align*}
&\|U_{\la_1}(m,n)-U_{\la_2}(m,n)\|\\&\leq
\sum_{\tau=n}^{m-1}\|\ma(m,\tau+1)P_{\tau+1}\|A^4(\tau)+\sum\limits_{\tau=m}^\iy
\|\ma(m,\tau+1)Q_{\tau+1}\|A^4(\tau)\\
&\leq
Kc(h_m/h_n)^a\mu_{|n|}^\ve\left(\sum\limits_{\tau=n}^{m-1}\mu_{|\tau+1|}^{-\og}
+\sum\limits_{\tau=m}^\iy\nu_{|\tau+1|}^{-\og}\right)
(\|U_{\la_1}-U_{\la_2}\|_1+\wkt |\la_1-\la_2|)\\
 &\leq Kc(N_1+N_2)(h_m/h_n)^a\mu_{|n|}^\ve(\|U_{\la_1}-U_{\la_2}\|_1+\wkt
 |\la_1-\la_2|).
\end{align*}
Then
$$
\|U_{\la_1}-U_{\la_2}\|_1\leq[\wkt
Kc(N_1+N_2)/(1-Kc(N_1+N_2))]\cdot|\la_1-\la_2|.
$$
The proof is complete.
\end{proof}

\begin{lemma}\label{leaaa12}
For $\la\in Y$ and $n\in\Z$, there exists a unique solution
$V_\la\in\Og_2$ of \eqref{eqpllinear} satisfying
\begin{equation}\label{eqzhubajiea}
\begin{split}
V_\la(m,n)&=\ma(m,n)Q_n+\sum_{\tau=-\iy}^{m-1}\ma(m,\tau+1)P_{\tau+1}B_\tau (\la)V_\la(\tau,n)\\
&\quad-\sum\limits_{\tau=m}^{n-1}
\ma(m,\tau+1)Q_{\tau+1}B_\tau(\la) V_\la(\tau,n)
\end{split}
\end{equation}
and $V_\la(m,\sigma)V_\la(\sg,n)=V_\la(m,n)$ for $n\geq\sg\geq m$.
Moreover, $V_{\la}$ is Lipschitz continuous in $\la$.
\end{lemma}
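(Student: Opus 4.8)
The plan is to run the argument of Lemma~\ref{leaaa1} in the dual direction, now inside the Banach space $(\Og_2,\|\cdot\|_2)$ and with the roles of the two sums in \eqref{eqzhubajiea} interchanged. For each $\la\in Y$ I would introduce the operator $J_2^\la$ on $\Og_2$ by
\[
(J_2^\la V)(m,n)=\ma(m,n)Q_n+\s_{\tau=-\iy}^{m-1}\ma(m,\tau+1)P_{\tau+1}B_\tau(\la)V(\tau,n)-\s_{\tau=m}^{n-1}\ma(m,\tau+1)Q_{\tau+1}B_\tau(\la)V(\tau,n)
\]
and prove it has a unique fixed point. To see that $J_2^\la$ maps $\Og_2$ into itself, in the sum over $\tau<m$ one bounds $\|\ma(m,\tau+1)P_{\tau+1}\|$ by $K(h_m/h_{\tau+1})^a\mu_{|\tau+1|}^{\ve}$ and uses the first member of the minimum in (a$_2$) for $\|B_\tau(\la)\|$, while in the finite sum over $m\le\tau<n$ one bounds $\|\ma(m,\tau+1)Q_{\tau+1}\|$ by $K(k_{\tau+1}/k_m)^{-b}\nu_{|\tau+1|}^{\ve}$ and uses the second member. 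Collapsing the telescoping quotients $h_m/h_{\tau+1}$ and $k_{\tau+1}/k_m$ by monotonicity of the growth rates, and invoking the two series estimates of (a$_4$) (together with $\mu_{|m|},\nu_{|m|}\ge1$), yields $\|J_2^\la V\|_2\le K+Kc(N_1+N_2)\|V\|_2<\iy$ and, by the same computation applied to a difference, $\|J_2^\la V_1-J_2^\la V_2\|_2\le Kc(N_1+N_2)\|V_1-V_2\|_2$. Since \eqref{eqthyangmi} forces $Kc(N_1+N_2)<1$, the map $J_2^\la$ is a contraction of $\Og_2$, whence a unique fixed point $V_\la\in\Og_2$, which is exactly \eqref{eqzhubajiea}; a routine substitution then shows $V_\la(m,n)_{n\ge m}$ is a solution of \eqref{eqpllinear}.

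For the cocycle identity $V_\la(m,\sg)V_\la(\sg,n)=V_\la(m,n)$ with $n\ge\sg\ge m$, I would insert \eqref{eqzhubajiea} into the product $V_\la(m,\sg)V_\la(\sg,n)$ and simplify using the invariance relation $P_m\ma(m,n)=\ma(m,n)P_n$ and its $Q$-analogue: the cross terms vanish because $Q_{\tau+1}P_{\tau+1}=0$, the term $\ma(m,\sg)Q_\sg\ma(\sg,n)Q_n$ collapses to $\ma(m,n)Q_n$, and after splitting $\s_{\tau=m}^{n-1}=\s_{\tau=m}^{\sg-1}+\s_{\tau=\sg}^{n-1}$ one finds that $L_\la(m,\sg):=V_\la(m,\sg)V_\la(\sg,n)-V_\la(m,n)$ satisfies
\[
L_\la(m,\sg)=\s_{\tau=-\iy}^{m-1}\ma(m,\tau+1)P_{\tau+1}B_\tau(\la)L_\la(\tau,\sg)-\s_{\tau=m}^{\sg-1}\ma(m,\tau+1)Q_{\tau+1}B_\tau(\la)L_\la(\tau,\sg).
\]
Reading the right-hand side as an operator $H_2^\la$ on $\Og_2^\sg$ ($\Og_2$ with $n$ replaced by $\sg$, restricted to $m\le\sg$), the same estimates show $H_2^\la$ is a contraction of $\Og_2^\sg$ into itself with $H_2^\la 0=0$, so $0$ is its only fixed point; since $L_\la\in\Og_2^\sg$ is also a fixed point of $H_2^\la$, we conclude $L_\la\equiv0$.

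For Lipschitz dependence on the parameter, I would first extract the a priori bound $\|V_\la\|_2\le\wkt$ with $\wkt$ as in \eqref{eqthaaa}, which follows from $\|V_\la\|_2\le K+Kc(N_1+N_2)\|V_\la\|_2$. Then, for $\la_1,\la_2\in Y$, I would estimate $A^4(\tau):=\|B_\tau(\la_1)V_{\la_1}(\tau,n)-B_\tau(\la_2)V_{\la_2}(\tau,n)\|$ by inserting $B_\tau(\la_1)V_{\la_2}(\tau,n)$ and using the two parts of (a$_2$) together with $\|V_{\la_2}\|_2\le\wkt$, exactly as in the $A^4$-step of Lemma~\ref{leaaa1}; feeding this into \eqref{eqzhubajiea} and applying (a$_4$) gives
\[
\|V_{\la_1}-V_{\la_2}\|_2\le Kc(N_1+N_2)\bigl(\|V_{\la_1}-V_{\la_2}\|_2+\wkt|\la_1-\la_2|\bigr),
\]
hence $\|V_{\la_1}-V_{\la_2}\|_2\le[\wkt Kc(N_1+N_2)/(1-Kc(N_1+N_2))]\,|\la_1-\la_2|$. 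The one place that requires genuine care — and the step I expect to be the main bookkeeping obstacle — is matching the correct member of the minimum in (a$_2$) to each of the two sums in \eqref{eqzhubajiea}, so that the leftover growth-rate quotients telescope precisely to $(k_n/k_m)^{-b}$ and the residual $\mu_{|\tau+1|}^{-\og}$ or $\nu_{|\tau+1|}^{-\og}$ factors are exactly those summed in (a$_4$); everything else is a line-by-line mirror of the stable-direction computation carried out for Lemma~\ref{leaaa1}.
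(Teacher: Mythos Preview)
Your proposal is correct and follows essentially the same route as the paper: define $J_2^\la$ on $\Og_2$, show it is a contraction via the dichotomy bounds, (a$_2$) and (a$_4$); obtain the cocycle identity by showing the difference $L_\la^*(m,\sg)=V_\la(m,\sg)V_\la(\sg,n)-V_\la(m,n)$ is a fixed point of a contraction $H_2^\la$ on $\Og_2^\sg$ that also fixes $0$; and derive Lipschitz dependence by inserting $B_\tau(\la_1)V_{\la_2}(\tau,n)$ and using the a priori bound $\|V_\la\|_2\le\wkt$. Your bookkeeping of which member of the minimum in (a$_2$) pairs with each sum is exactly the point, and your lower limit $\tau=-\iy$ in the $H_2^\la$ sum is the intended one (the paper's printed lower limit $\tau=1$ there is a typo).
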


\begin{proof}
It is obvious that $V_\la(m,n)_{n\geq m}$ satisfying
\eqref{eqzhubajiea} is a solution of \eqref{eqpllinear}. For each
$\la\in Y$, define the operator $J_2^\la$ in $\Og_2$ by
\begin{align*}
(J_2^\la V)(m,n)&
=\ma(m,n)Q_n+\sum\limits_{\tau=-\iy}^{m-1}\ma(m,\tau+1)P_{\tau+1}B_\tau(\la)
V(\tau,n)\\&\quad-\sum\limits_{\tau=m}^{n-1}
\ma(m,\tau+1)Q_{\tau+1}B_\tau(\la) V(\tau,n).
\end{align*}
It follows from \eqref{deeqbl} that
\begin{equation}\label{eqsongjiang}
\begin{split}
A^5_\la:&=\sum\limits_{\tau=-\iy}^{m-1}\|\ma(m,\tau+1)P_{\tau+1}\|\|B_\tau(\la)\|\|V(\tau,n)\|\\
&\quad+\sum\limits_{\tau=m}^{n-1}
\|\ma(m,\tau+1)Q_{\tau+1}\|\|B_\tau(\la)\|\|V(\tau,n)\|\\
&\leq
Kc(k_n/k_m)^{-b}\nu_{|n|}^\ve\sum\limits_{\tau=-\iy}^{m-1}\mu_{|\tau+1|}^{-\og}\|V\|_2\\
&\quad+Kc(k_n/k_m)^{-b}\nu_{|n|}^\ve\sum\limits_{\tau=m}^{n-1}\nu_{|\tau+1|}^{-\og}
\|V\|_2\\
&\leq Kc(N_1+N_2)(k_n/k_m)^{-b}\nu_{|n|}^\ve\|V\|_2
\end{split}
\end{equation}
and
\begin{align*}
\|(J_2^\la V)(m,n)\|&\leq K(k_n/k_m)^{-b}\nu_{|n|}^\ve+A^5_\la\\
&\leq
K(k_n/k_m)^{-b}\nu_{|n|}^\ve+Kc(N_1+N_2)(k_n/k_m)^{-b}\nu_{|n|}^\ve\|V\|_2.
\end{align*}
Then
\begin{equation}\label{leeqfangshuanb}
\|J_2^\la V\|_2\leq K+Kc(N_1+N_2)\|V\|_2<\iy
\end{equation}
and $J_2^\la:\Og_2\ra\Og_2$ is well-defined. Proceeding in a
manner similar to those in \eqref{eqsongjiang}, one has
$$
\|J_2^\la V_1-J_2^\la V_2\|_2\leq Kc(N_1+N_2)\|V_1-V_2\|_2.
$$
The operator $J_2^\la$ is a contraction due to \eqref{eqthyangmi}
and then there exists a unique $V_\la\in\Og_2$ such that $J_2^\la
V_\la=V_\la$. Hence \eqref{eqzhubajiea} holds.

From \eqref{eqzhubajiea}, it follows that
\begin{align*}
V_\la(m,\sg)V_\la(\sg,n)&=\ma(m,n)Q_n-\sum\limits_{\tau=\sg}^{n-1}
\ma(m,\tau+1)Q_{\tau+1}B_\tau(\la) V_\la(\tau,n)\\
&\quad+\sum\limits_{\tau=1}^{m-1} \ma(m,\tau+1)P_{\tau+1}B_\tau (\la)V_\la(\tau,\sg)V_\la(\sg,n)\\
&\quad-\sum\limits_{\tau=m}^{\sg-1}\ma(m,\tau+1)Q_{\tau+1}B_\tau(\la)
V_\la(\tau,\sg) V_\la(\sg,n).
\end{align*}
For a fixed $\sg\in\Z$, let
$L^*_\la(m,\sg)=V_\la(m,\sg)V_\la(\sg,n)-V_\la(m,n)$ for $n\geq
\sg\geq m$. Consider the operator $H_2^\la$ defined by
\begin{align*}
(H_2^\la l^*)(m,\sg)
&=\sum\limits_{\tau=1}^{m-1}\ma(m,\tau+1)P_{\tau+1}B_\tau(\la)
l^*(\tau,\sg)\\&\quad-\sum_{\tau=m}^{\sg-1}
\ma(m,\tau+1)Q_{\tau+1}B_\tau(\la) l^*(\tau,\sg)
\end{align*}
for $\la\in Y$, $l^*\in\Og_{2}^\sg$, and $m\geq \sg$, where
$\Og_{2}^\sg$ is obtained from $\Og_2$ by replacing $n$ with
$\sg$. It is not difficult to show that $H_2^\la L^*=L^*$, $
\|H_2^\la l^*\|_2\leq Kc(N_1+N_2)\|l^*\|_2$ and
\begin{align*}
\|H_2^\la l_1^*-H_2^\la l_2^*\|_2\leq Kc(N_1+N_2)\|l_1^*-l_2^*\|_2
\end{align*}
for $l^*,l_1^*,l_2^*\in\Og_2^\sg$. Then there exists a unique
$l^*\in\Og_2^\sg$ such that $H_2^\la l^*_\la=l^*_\la$ and
$l^*_\la=L^*_\la$. Moreover, $0\in\Og_2^\sg$ also satisfies this
identity and $H_2^\la0=0$, which then implies that
$L^*_\la=l^*_\la=0$.

Next we show that $V_{\la}$ is Lipschitz continuous in $\la$. For
any $\la_1,\la_2\in Y$, there exist bounded solutions
$V_{\la_1},V_{\la_2}\in\Og_1$ satisfying \eqref{eqzhubajiea}. It
follows from (a$_2$) and \eqref{leeqfangshuanb} that
\begin{align*}
A^6(\tau):&=\|B_\tau(\la_1)V_{\la_1}(\tau,n)-B_\tau(\la_2)V_{\la_2}(\tau,n)\|\\
&\leq\|B_\tau(\la_1)V_{\la_1}(\tau,n)-B_\tau(\la_1)V_{\la_2}(\tau,n)\|\\&\quad+
\|B_\tau(\la_1)V_{\la_2}(\tau,n)-B_\tau(\la_2)V_{\la_2}(\tau,n)\|\\
&\leq
c(k_{\tau}/k_n)^a\nu_{|\tau+1|}^{-\og-\ve}\nu_{|n|}^\ve(\|V_{\la_1}-V_{\la_2}\|_2+\wkt
|\la_1-\la_2|)
\end{align*}
for any $\tau\geq n$. By \eqref{eqzhubajiea}, one has
\begin{align*}
&\|V_{\la_1}(m,n)-V_{\la_2}(m,n)\|\\&\leq
\sum_{\tau=1}^{m-1}\|\ma(m,\tau+1)P_{\tau+1}\|A^6(\tau)+\sum\limits_{\tau=m}^{n-1}
\|\ma(m,\tau+1)Q_{\tau+1}\|A^6(\tau)\\
&\leq
Kc(k_m/k_n)^a\nu_{|n|}^\ve\left(\sum\limits_{\tau=1}^{m-1}\mu_{|\tau+1|}^{-\og}
+\sum\limits_{\tau=m}^{n-1}\nu_{\tau+1}^{-\og}\right)
(\|V_{\la_1}-V_{\la_2}\|_2+\wkt |\la_1-\la_2|)\\
 &\leq Kc(N_1+N_2)(k_m/k_n)^a\nu_{|n|}^\ve(\|V_{\la_1}-V_{\la_2}\|_2+\wkt
 |\la_1-\la_2|).
\end{align*}
Then
$$
\|V_{\la_1}-V_{\la_2}\|_2\leq[\wkt
Kc(N_1+N_2)/(1-Kc(N_1+N_2))]\cdot|\la_1-\la_2|.
$$
The proof is complete.
\end{proof}

For $\la\in Y$ and $m\in\Z$, define
$$
\pt_m(\la)=\mab_\la(m,0)U_\la(0,0)\mab_\la(0,m),~~\qtt_m(\la)=\mab_\la(m,0)V_\la(0,0)\mab_\la(0,m).
$$
Then $U_\la(m,0)P_0=U_\la(m,0)$ since
$\widehat{U}_\la(m,0)=U_\la(m,0)P_0$ satisfies \eqref{eqzhubajie}
with $n=0$ and $V_\la(m,0)Q_0=V_\la(m,0)$ since
$\widehat{V}_\la(m,0)=V_\la(m,0)Q_0$ satisfies \eqref{eqzhubajiea}
with $n=0$.  For $\la\in Y$,  from Lemmas \ref{leaaa1},
\ref{leaaa12} and
\begin{equation}\label{eqeqbaiyunruia}
\begin{split}
\pt_0(\la)&=U_\la(0,0)=P_0-\sum\limits_{\tau=0}^\iy
\ma(0,\tau+1)Q_{\tau+1}B_\tau(\la) U_\la(\tau,0),\\
\qtt_0(\la)&=V_\la(0,0)=Q_0+\sum\limits_{\tau=-\iy}^{-1}
\ma(0,\tau+1)P_{\tau+1}B_\tau(\la) V_\la(\tau,0),
\end{split}
\end{equation}
it follows that
\begin{itemize}
\item[\tu{(b$_1$)}] $\pt_m(\la)$ and $\qtt_m(\la)$ are projections
for $m\in\Z$;

\item[\tu{(b$_2$)}]
$\pt_m(\la)\mab_\la(m,n)=\mab_\la(m,n)\pt_n(\la),
\qtt_m(\la)\mab_\la(m,n)=\mab_\la(m,n)\qtt_n(\la)$ for $m$,
$n\in\Z$;

\item[\tu{(b$_3$)}] $P_0\pt_0(\la)=P_0,Q_0\qtt_0(\la)=Q_0,
Q_0(\id-\pt_0(\la))=\id-\pt_0(\la),
P_0(\id-\qtt_0(\la))=\id-\qtt_0(\la);$

\item[\tu{(b$_4$)}]$\pt_0(\la) P_0=\pt_0(\la), \qtt_0(\la)
Q_0=\qtt_0(\la).$
\end{itemize}

\begin{lemma}\label{leiii} For $\la\in Y$, one has
\begin{align*}\|\mab_\la(m,n)|\Imm\pt_n(\la)\|&\leq
\wkt(h_m/h_n)^a\mu_{|n|}^\ve, m\geq
n,\\\|\mab_\la(m,n)|\Imm\qtt_n(\la)\|&\leq\wkt(k_n/k_m)^{-b}\nu_{|n|}^\ve,
m\leq n. \end{align*}
\end{lemma}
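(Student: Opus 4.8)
The idea is to transfer the bounds already controlled for the Perron operators $U_\la,V_\la$ to $\mab_\la(m,n)$ restricted to the ranges of $\pt_n(\la),\qtt_n(\la)$, by showing that on those ranges $\mab_\la(m,n)$ acts as $U_\la(m,n)$, resp.\ $V_\la(m,n)$.

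First I would record the uniform bounds $\|U_\la\|_1\le\wkt$ and $\|V_\la\|_2\le\wkt$. Since $U_\la$ is the fixed point of $J_1^\la$, \eqref{leeqfangshuan} gives $\|U_\la\|_1\le K+Kc(N_1+N_2)\|U_\la\|_1$, and \eqref{eqthyangmi} forces $Kc(N_1+N_2)<1$, so $\|U_\la\|_1\le K/(1-Kc(N_1+N_2))=\wkt$ by \eqref{eqthaaa}; the same argument with \eqref{leeqfangshuanb} gives $\|V_\la\|_2\le\wkt$. Concretely, $\|U_\la(m,n)\|\le\wkt(h_m/h_n)^a\mu_{|n|}^\ve$ for $m\ge n$ and $\|V_\la(m,n)\|\le\wkt(k_n/k_m)^{-b}\nu_{|n|}^\ve$ for $m\le n$. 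Next, since $\{U_\la(m,n)\}_{m\ge n}$ solves \eqref{eqpllinear}, $U_\la(m,n)=\mab_\la(m,n)U_\la(n,n)$ for $m\ge n$; putting $m=\sg=n$ in the semigroup identity of Lemma \ref{leaaa1} shows $U_\la(n,n)$ is a projection, and evaluating \eqref{eqzhubajie} at $m=n$ gives $P_nU_\la(n,n)=P_n$, hence $\ker U_\la(n,n)=\Imm Q_n$. Dually (Lemma \ref{leaaa12}, \eqref{eqzhubajiea}): $V_\la(m,n)=\mab_\la(m,n)V_\la(n,n)$ for $m\le n$, $V_\la(n,n)$ a projection, $\ker V_\la(n,n)=\Imm P_n$; moreover \eqref{eqthyangmi} makes $\Imm U_\la(n,n)$ and $\Imm V_\la(n,n)$ complementary.

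The key step is the identification $\Imm\pt_n(\la)=\Imm U_\la(n,n)$ (and, dually, $\Imm\qtt_n(\la)=\Imm V_\la(n,n)$) for every $n\in\Z$. I would first show that $\Imm U_\la(n,n)$ is exactly the set of $\xi\in X$ for which $\{\mab_\la(m,n)\xi\}_{m\ge n}$ remains bounded in the weight $(h_m/h_n)^{-a}\mu_{|n|}^{-\ve}$: the inclusion $\subseteq$ is clear from $\mab_\la(m,n)U_\la(n,n)=U_\la(m,n)$ and the bound above, while $\supseteq$ uses $\ker U_\la(n,n)=\Imm Q_n$ together with hypothesis (a$_3$), which forces $\mab_\la(m,n)$ to expand as $m\to\iy$ any nonzero component of $\xi$ in the complement $\Imm V_\la(n,n)$. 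On the other hand, $\pt_n(\la)=\mab_\la(n,0)U_\la(0,0)\mab_\la(0,n)$, so $\Imm\pt_n(\la)=\mab_\la(n,0)(\Imm U_\la(0,0))$, and since boundedness of $\{\mab_\la(m,n)\xi\}_{m\ge n}$ is equivalent to that of $\{\mab_\la(m,0)(\mab_\la(0,n)\xi)\}_{m\ge 0}$, the bounded-orbit set at $n$ is $\mab_\la(n,0)$ of the one at $0$; applying the characterization at $0$ (where it reads $\Imm U_\la(0,0)$) identifies it with $\Imm\pt_n(\la)$. Hence $\Imm\pt_n(\la)=\Imm U_\la(n,n)$.

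With this, the lemma is immediate: for $m\ge n$ and $\xi\in\Imm\pt_n(\la)=\Imm U_\la(n,n)$ one has $\xi=U_\la(n,n)\xi$, so
\[
\mab_\la(m,n)\xi=\mab_\la(m,n)U_\la(n,n)\xi=U_\la(m,n)\xi,
\]
whence $\|\mab_\la(m,n)\xi\|\le\|U_\la(m,n)\|\,\|\xi\|\le\wkt(h_m/h_n)^a\mu_{|n|}^\ve\|\xi\|$; for $m\le n$ and $\xi\in\Imm\qtt_n(\la)=\Imm V_\la(n,n)$ the same computation with $V_\la$ gives $\|\mab_\la(m,n)\xi\|\le\wkt(k_n/k_m)^{-b}\nu_{|n|}^\ve\|\xi\|$. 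The step I expect to cost the most work is the range identification of the previous paragraph — in particular the expansion claim that makes $\Imm U_\la(n,n)$ coincide with the bounded-orbit set, and the treatment of $n<0$ (where transporting the construction around the base point $0$ is circular), for which one must exploit (a$_3$) and the smallness \eqref{eqthyangmi} together with the uniqueness in Lemmas \ref{leaaa1}, \ref{leaaa12}; once this is in place, the estimates are routine manipulations of the dichotomy bounds.
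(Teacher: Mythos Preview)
Your overall strategy---reduce the estimate to $\mab_\la(m,n)\xi=U_\la(m,n)\xi$ on $\Imm\pt_n(\la)$ and then invoke $\|U_\la\|_1\le\wkt$---is sound, and this is essentially a reformulation of what the paper does. The difference is that the paper never proves the range identity $\Imm\pt_n(\la)=\Imm U_\la(n,n)$; instead it argues directly that $z_m^\la=\mab_\la(m,n)\pt_n(\la)\xi$ is \emph{sup-norm} bounded (because for $m\ge0$ it equals $U_\la(m,0)\mab_\la(0,n)\xi$, and only finitely many $m$ remain when $n<0$), hence satisfies the Perron equation \eqref{eqsuiguangyu}, and then obtains the weighted bound by a self-referential inequality of the form $\|\pt(\la)\mab_\la\|_1\le K+Kc(N_1+N_2)\|\pt(\la)\mab_\la\|_1$. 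This is exactly the contraction/uniqueness mechanism you invoke, just applied directly to the estimate rather than routed through a range characterization.

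Where your proposal has a genuine gap is the $\supseteq$ direction of the bounded-orbit characterization. You phrase it for the \emph{weighted} norm and justify $\supseteq$ by saying that (a$_3$) forces $\mab_\la(m,n)$ to expand nonzero components in $\Imm V_\la(n,n)$ as $m\to\infty$. But to make that expansion rigorous you need a lower bound on $\|\mab_\la(m,n)\eta\|$ for $\eta\in\Imm V_\la(n,n)$ and $m\ge n$, which in turn requires the second inequality of this very lemma (together with $\mab_\la(m,n)(\Imm V_\la(n,n))\subseteq\Imm V_\la(m,m)$, i.e., $\Imm\qtt_n(\la)=\Imm V_\la(n,n)$, which is the dual identification). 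So the two halves of the lemma end up leaning on each other. The clean repair is to state the characterization for \emph{sup}-bounded forward orbits: if $(\mab_\la(m,n)\xi)_{m\ge n}$ is bounded, then it satisfies \eqref{eqsuiguangyu}, and since $U_\la(\cdot,n)\xi$ satisfies the same equation with the same inhomogeneous term $\ma(m,n)P_n\xi$, the difference is a fixed point of the operator $H_1^\la$ (from the proof of Lemma~\ref{leaaa1}), which is a contraction already on the space of bounded sequences by the same estimates; hence $\mab_\la(m,n)\xi=U_\la(m,n)\xi$ and $\xi\in\Imm U_\la(n,n)$. With this in hand your final paragraph goes through verbatim, and you recover exactly the paper's bound. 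Note also that you only need the inclusion $\Imm\pt_n(\la)\subseteq\Imm U_\la(n,n)$ for the lemma, so the expansion argument is not required at all.
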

\begin{proof}
By the variation-of-constants formula, for  $\la\in Y$ and
$m\in\Z$, if $(z_m^\la)_{m\geq n}$ is a solution of
\eqref{eqpllinear}, then
 $z_m^\la=P_mz_m^\la+Q_mz_m^\la$, where
$$
P_mz_m^\la=\ma(m,n)P_nz_n^\la+\sum\limits_{\tau=n}^{m-1}\ma(m,\tau+1)P_{\tau+1}B_\tau(\la)
z_\tau^\la,
$$
\begin{equation}\label{eqyujiajia}
Q_mz_m^\la=\ma(m,n)Q_nz_n^\la+\sum\limits_{\tau=n}^{m-1}\ma(m,\tau+1)Q_{\tau+1}B_\tau(\la)
z_\tau^\la.
\end{equation}
Our strategy here is to show that, if $(z_m^\la)_{m\geq n}$ is
bounded, then
\begin{equation}\label{eqsuiguangyu}
\begin{split}
z_m^\la&=\ma(m,n)P_nz_n^\la+\sum\limits_{\tau=n}^{m-1}\ma(m,\tau+1)P_{\tau+1}B_\tau(\la)
z_\tau^\la\\&\quad-\sum\limits_{\tau=m}^\iy
\ma(m,\tau+1)Q_{\tau+1}B_\tau(\la) z_\tau^\la,~~m\geq n.
\end{split}
\end{equation}
By \eqref{eqyujiajia}, we have
\begin{equation}\label{eqbili}
Q_nz_n^\la=\ma(n,m)Q_mz_m^\la-\sum\limits_{\tau=n}^{m-1}\ma(n,\tau+1)Q_{\tau+1}B_\tau(\la)
z_\tau^\la.
\end{equation}
Moreover, $ \|\ma(n,m)Q_m\|\leq K(k_m/k_n)^{-b}\nu_{|m|}^\ve $ and
\begin{align*}
\sum\limits_{\tau=n}^\iy\|\ma(n,\tau+1)Q_{\tau+1}B_\tau(\la)
z_\tau^\la\|&\leq
Kc\sum\limits_{\tau=n}^\iy\nu_{|\tau+1|}^{-\og}\sup\limits_{\tau\geq
n}\|z_\tau^\la\|\\&\leq KcN_2\sup\limits_{\tau\geq
n}\|z_\tau^\la\|<\iy.
\end{align*}
Then, $ Q_nz_n=-\sum\limits_{\tau=n}^\iy
\ma(n,\tau+1)Q_{\tau+1}B_\tau(\la) z_\tau^\la $ by letting $m\ra
\iy$ in \eqref{eqbili}. Hence,
\begin{align*}
Q_mz_m^\la&=-\sum\limits_{\tau=n}^\iy
\ma(m,\tau+1)Q_{\tau+1}B_\tau(\la) z_\tau^\la+\sum\limits_{\tau=n}^{m-1}\ma(m,\tau+1)Q_{\tau+1}B_\tau(\la) z_\tau^\la\\
&=-\sum\limits_{\tau=m}^\iy \ma(m,\tau+1)Q_{\tau+1}B_\tau(\la)
z_\tau^\la,
\end{align*}
which proves \eqref{eqsuiguangyu}.

Given $\xi\in X$, for $\la\in Y$, consider the solution
$z_m^\la=\mab_\la(m,n)\pt_n(\la)\xi$ of \eqref{eqpllinear} for
$m\geq n$. By the fact that $\mab_\la(m,0)U_\la(0,0)$ and
$U_\la(m,0)$ are solutions of \eqref{eqpllinear}, which coincide
for $m=0$, we have
$$
z_m^\la:=\mab_\la(m,0)U_\la(0,0)\mab_\la(0,n)\xi=U_\la(m,0)\mab_\la(0,n)\xi.
$$
Then $(z_m^\la)_{m\geq n}$ is a bounded solution of
\eqref{eqpllinear} with the initial value $z_n^\la=\pt_n(\la)\xi$
since  $U_\la(m,0)$ is bounded for $m\in\Z$. From
\eqref{eqsuiguangyu}, for $m\geq n$, it follows that
\begin{align*}
\pt_m(\la)\mab_\la(m,n)\xi&=\ma(m,n)P_n\pt_n(\la)\xi\\&\quad+\sum\limits_{\tau=n}^{m-1}\ma(m,\tau+1)P_{\tau+1}B_\tau(\la)\pt_\tau(\la)\mab_\la(\tau,n)\xi\\
&\quad-\sum\limits_{\tau=m}^\iy
\ma(m,\tau+1)Q_{\tau+1}B_\tau(\la)\pt_\tau
(\la)\mab_\la(\tau,n)\xi.
\end{align*}
Moreover,
\begin{align*}
A^7_\la:&=\sum\limits_{\tau=n}^{m-1}\|\ma(m,\tau+1)P_{\tau+1}\|\|B_\tau(\la)\|\|\pt_\tau(\la)\mab_\la(\tau,n)\xi\|\\
&\leq
Kc\sum\limits_{\tau=n}^{m-1}(h_m/h_{\tau})^{a}\mu_{|\tau+1|}^{-\og}\|\pt_\tau(\la)\mab_\la(\tau,n)\|\|\pt_n(\la)\xi\|\\
&\leq
Kc(h_m/h_n)^a\mu_{|n|}^\ve\|\pt(\la)\mab_\la\|_1\|\pt_n(\la)\xi\|\sum\limits_{\tau=n}^{m-1}\mu_{|\tau+1|}^{-\og}\\
&\leq
Kc(h_m/h_n)^a\mu_{|n|}^\ve\|\pt(\la)\mab_\la\|_1\|\pt_n(\la)\xi\|N_1
\end{align*}
and
\begin{align*}
A^8_\la:&=\sum\limits_{\tau=m}^\iy
\|\ma(m,\tau+1)Q_{\tau+1}\|\|B_\tau(\la)\|\|\pt_\tau(\la)\mab_\la(\tau,n)\xi\|\\
&\leq
Kc\sum\limits_{\tau=m}^\iy(k_{\tau+1}/k_m)^{-b}\nu_{|\tau+1|}^{-\og}\|\pt_\tau(\la)\mab_\la(\tau,n)\|\|\pt_n(\la)\xi\|\\
&\leq
Kc(h_m/h_n)^a\mu_{|n|}^\ve\|\pt(\la)\mab_\la\|_1\|\pt_n(\la)\xi\|\sum\limits_{\tau=m}^\iy\nu_{|\tau+1|}^{-\og}\\
&\leq
Kc(h_m/h_n)^a\mu_{|n|}^\ve\|\pt(\la)\mab_\la\|_1\|\pt_n(\la)\xi\|N_2.
\end{align*}
Then
\begin{align*}
\|\pt_m(\la)\mab_\la(m,n)\xi\|&\leq
K(h_m/h_n)^a\mu_{|n|}^\ve\|\pt_n(\la)\xi\|+A^7_\la+A^8_\la\\
&\leq
K(h_m/h_n)^a\mu_{|n|}^\ve\|\pt_n(\la)\xi\|\\&\quad+Kc(N_1+N_2)(h_m/h_n)^a\mu_{|n|}^\ve\|\pt(\la)\mab_\la\|_1\|\pt_n(\la)\xi\|
\end{align*}
and $\|\pt(\la)\mab_\la\|_1\leq \wkt$. Therefore, the first
inequality holds.

By carrying out similar arguments, we claim that, for each $\la\in
Y$, if $(z_m^\la)_{m\leq n}$ is a bounded solution of
\eqref{eqpllinear} and
$\lim\limits_{m\ra-\iy}h_m^{-a}\mu_{|m|}^\ve=0$, then
\begin{equation}\label{eqwanghongyan}
\begin{split}
z_m^\la&=\ma(m,n)Q_nz_n^\la+\sum\limits_{\tau=-\iy}^{m-1}\ma(m,\tau+1)P_{\tau+1}B_\tau(\la)
z_\tau^\la\\&\quad-\sum\limits_{\tau=m}^{n-1}\ma(m,\tau+1)Q_{\tau+1}B_\tau(\la)
z_\tau^\la.
\end{split}
\end{equation}
Given $\xi\in X$ and $\la\in Y$, one has
\begin{align*}
z_m^\la:=\mab_\la(m,n)\qtt_n(\la)\xi=V_\la(m,0)\mab_\la(0,n)\xi,~m\leq
n
\end{align*}
and $(z_m^\la)_{m\leq n}$ is a bounded solution of
\eqref{eqpllinear} with $z_n^\la=\qtt_n(\la)\xi$. Then, by
\eqref{eqwanghongyan},
\begin{align*}
\qtt_m(\la)\mab_\la(m,n)\xi&=\ma(m,n)Q_n\qtt_n(\la)\xi\\&\quad+\sum\limits_{\tau=-\iy}^{m-1}\ma(m,\tau+1)P_{\tau+1}B_\tau(\la)\qtt_\tau(\la)\mab_\la(\tau,n)\xi\\
&\quad-\sum\limits_{\tau=m}^{n-1} \ma(m,\tau+1)Q_{\tau+1}
B_\tau(\la)\qtt_\tau(\la)\mab_\la(\tau,n)\xi.
\end{align*}
Note that
\begin{align*}
A^9_\la:&=\sum\limits_{\tau=-\iy}^{m-1}\|\ma(m,\tau+1)P_{\tau+1}\|\|B_\tau(\la)\|\|\qtt_\tau(\la)\mab_\la(\tau,n)\xi\|\\
&\leq
Kc\sum\limits_{\tau=-\iy}^{m-1}(h_m/h_{\tau})^{a}\mu_{|\tau+1|}^{-\og}
\|\qtt_\tau(\la)\mab_\la(\tau,n)\|\|\qtt_n(\la)\xi\|\\
&\leq
Kc(k_n/k_m)^{-b}\nu_{|n|}^\ve\|\qtt(\la)\mab_\la\|_2\|\qtt_n(\la)\xi\|\sum\limits_{\tau=-\iy}^{m-1}
\mu_{|\tau+1|}^{-\og}\\
&\leq
Kc(k_n/k_m)^{-b}\nu_{|n|}^\ve\|\qtt(\la)\mab_\la\|_2\|\qtt_n(\la)\xi\|N_1
\end{align*}
and
\begin{align*}
A^{10}_\la:&=\sum\limits_{\tau=m}^{n-1}\|\ma(m,\tau+1)Q_{\tau+1}\|\|B_\tau(\la)\|\|\qtt_\tau(\la)\mab_\la(\tau,n)\xi\|\\
&\leq
Kc\sum\limits_{\tau=m}^{n-1}(k_{\tau+1}/k_m)^{-b}\nu_{|\tau+1|}^{-\og}
\|\qtt_\tau(\la)\mab_\la(\tau,n)\|\|\qtt_n(\la)\xi\|\\
&\leq
Kc(k_n/k_m)^{-b}\nu_{|n|}^\ve\|\qtt(\la)\mab_\la\|_2\|\qtt_n(\la)\xi\|\sum\limits_{\tau=m}^{n-1}
\nu_{|\tau+1|}^{-\og}\\
&\leq
Kc(k_n/k_m)^{-b}\nu_{|n|}^\ve\|\qtt(\la)\mab_\la\|_2\|\qtt_n(\la)\xi\|
N_2,
\end{align*}
then
\begin{align*}
\|\qtt_m(\la)\mab_\la(m,n)\xi\|&\leq
K(k_n/k_m)^{-b}\nu_{|n|}^\ve\|\qtt_n(\la)\xi\|+A^9_\la+A^{10}_\la\\
&\leq
K(k_n/k_m)^{-b}\nu_{|n|}^\ve\|\qtt_n(\la)\xi\|\\&\quad+Kc(N_1+N_2)(k_n/k_m)^{-b}\nu_{|n|}^\ve\|\qtt(\la)\mab_\la\|_2\|\qtt_n(\la)\xi\|
\end{align*}
and $$ \|\qtt(\la)\mab_\la\|_2\leq K
+Kc(N_1+N_2)\|\qtt(\la)\mab_\la\|_2, $$ i.e.,
$\|\qtt(\la)\mab_\la\|_2\leq \wkt$, which yields the second
inequality.
\end{proof}

Next, we construct the projections $\wpt_m(\la)$ for $\la\in Y$.
\begin{lemma}\label{leggg}
For $\la\in Y$, $S_0(\la)=\pt_{0}(\la)+\qtt_{0}(\la)$ is
invertible.
\end{lemma}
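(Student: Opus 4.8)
The plan is to show that $S_0(\la) = \pt_0(\la) + \qtt_0(\la)$ differs from the identity by a small operator, hence is invertible via a Neumann-series argument. First I would recall from the structural properties (b$_3$)–(b$_4$) that $\pt_0(\la)P_0 = \pt_0(\la)$, $\qtt_0(\la)Q_0 = \qtt_0(\la)$, and that $P_0\pt_0(\la) = P_0$, $Q_0\qtt_0(\la) = Q_0$. Combining these with $P_0 + Q_0 = \id$, one obtains
\[
P_0 S_0(\la) = P_0\pt_0(\la) + P_0\qtt_0(\la) = P_0 + P_0\qtt_0(\la),
\qquad
Q_0 S_0(\la) = Q_0\pt_0(\la) + Q_0 + Q_0\pt_0(\la) \cdot 0,
\]
and after adding, $S_0(\la) = \id + P_0\qtt_0(\la) - Q_0 + Q_0\pt_0(\la) = \id + (P_0\qtt_0(\la) - Q_0\qtt_0(\la)) + (Q_0\pt_0(\la) - P_0\pt_0(\la))$; using $\qtt_0(\la) = Q_0 + (P_0 - Q_0)\qtt_0(\la)$-type bookkeeping this simplifies to $S_0(\la) = \id + R(\la)$ where $R(\la)$ is built from the correction terms $\sum_{\tau=0}^{\iy}\ma(0,\tau+1)Q_{\tau+1}B_\tau(\la)U_\la(\tau,0)$ and $\sum_{\tau=-\iy}^{-1}\ma(0,\tau+1)P_{\tau+1}B_\tau(\la)V_\la(\tau,0)$ appearing in \eqref{eqeqbaiyunruia}.

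Next I would estimate $\|R(\la)\|$. For the first sum, using \eqref{deeqbl}, hypothesis (a$_2$), and the bound $\|U_\la(\tau,0)\| \le \wkt (h_\tau/h_0)^a \mu_{|\tau|}^\ve$ from Lemma \ref{leaaa1} (recall $U_\la \in \Og_1$ with $\|U_\la\|_1 \le \wkt$ by \eqref{leeqfangshuan}), one gets
\[
\Bigl\|\sum_{\tau=0}^{\iy}\ma(0,\tau+1)Q_{\tau+1}B_\tau(\la)U_\la(\tau,0)\Bigr\|
\le K c \wkt \sum_{\tau=0}^{\iy}\nu_{|\tau+1|}^{-\og}
\le K c \wkt N_2,
\]
where the last step invokes (a$_4$) (the second sum there, with $m=0$, $\nu_{|0|}^\ve = 1$). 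Symmetrically, the $V_\la$-sum is bounded by $K c \wkt N_1$ using the first inequality of (a$_4$). Hence $\|R(\la)\| \le K c \wkt (N_1 + N_2)$, and by \eqref{eqthaaa}, $K c \wkt (N_1+N_2) = K^2 c(N_1+N_2)/(1 - Kc(N_1+N_2))$, which is strictly less than $1$ precisely when $Kc(N_1+N_2) < 1/2$; this follows from the standing hypothesis \eqref{eqthyangmi} since $K(2K+1)(N_1+N_2) > 2K(N_1+N_2)$ forces $c < [2K(N_1+N_2)]^{-1}$.

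Therefore $\|R(\la)\| < 1$, so $S_0(\la) = \id + R(\la)$ is invertible with $S_0(\la)^{-1} = \sum_{j\ge 0}(-R(\la))^j$ and $\|S_0(\la)^{-1}\| \le (1 - \|R(\la)\|)^{-1}$. The main obstacle I anticipate is the first step: carefully running the projection algebra (b$_1$)–(b$_4$) together with \eqref{eqeqbaiyunruia} to extract the clean form $S_0(\la) = \id + R(\la)$ with $R(\la)$ equal to exactly those two convergent correction sums — the bookkeeping with $P_0$, $Q_0$, $\pt_0(\la)$, $\qtt_0(\la)$ and their mutual products is where sign errors or mis-telescoping are most likely. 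Once that identity is in hand, the norm estimate and Neumann-series conclusion are routine given Lemmas \ref{leaaa1} and \ref{leaaa12} and hypothesis (a$_4$).
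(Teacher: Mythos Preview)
Your proposal is correct and follows essentially the same approach as the paper: show $S_0(\la)-\id$ is small in norm and invoke a Neumann series. The paper streamlines your first step by writing directly $\pt_0(\la)+\qtt_0(\la)-\id = Q_0\pt_0(\la)+P_0\qtt_0(\la)$ (immediate from (b$_3$), since $\pt_0(\la)-P_0=(\id-P_0)\pt_0(\la)=Q_0\pt_0(\la)$ and similarly for the other term), then reads off the two correction sums from \eqref{eqeqbaiyunruia} and bounds them by $K\wkt cN_2$ and $K\wkt cN_1$ exactly as you do; your claim that $K\wkt c(N_1+N_2)<1$ ``precisely when $Kc(N_1+N_2)<1/2$'' is slightly off (the correct threshold is $Kc(N_1+N_2)<1/(K+1)$), but \eqref{eqthyangmi} gives $Kc(N_1+N_2)<1/(2K+1)$, which suffices either way.
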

\begin{proof}
By\eqref{eqeqbaiyunruia}, (b$_3$), and (b$_4$), one has
\begin{equation}\label{leeqxuliang}
\pt_0(\la)+\qtt_0(\la)-\id=Q_0\pt_0(\la)+P_0\qtt_0(\la),
\end{equation}
where
\begin{align*}
&P_0\qtt_0(\la)=P_0 V_\la(0,0)=\sum\limits_{\tau=-\iy}^{-1}
\ma(0,\tau+1)P_{\tau+1}B_\tau(\la) V_\la(\tau,0),\\
&Q_0\pt_0(\la)=Q_0 U_\la(0,0)=-\sum\limits_{\tau=0}^\iy
\ma(0,\tau+1)Q_{\tau+1}B_\tau(\la) U_\la(\tau,0).
\end{align*}
By \eqref{eqthaaa}, \eqref{leeqfangshuan} and
\eqref{leeqfangshuanb}, for $\la\in Y$,
\begin{equation}\label{leeqzhanzhaoa}
\|U_\la(m,n)\|\leq \wkt(h_m/h_n)^a\mu_{|n|}^\ve,~ m\geq n
\end{equation}
and
\begin{equation}\label{leeqzhanzhaob}
\|V_\la(m,n)\|\leq \wkt(k_n/k_m)^{-b}\nu_{|n|}^\ve, ~m\leq n.
\end{equation}
From \eqref{leeqxuliang}-\eqref{leeqzhanzhaob}, it follows that
\begin{align*}
A^{11}_\la:&=\sum\limits_{\tau=-\iy}^{-1}
\|\ma(0,\tau+1)P_{\tau+1}\|\|B_\tau(\la)\|\|V_\la(\tau,0)\|\\
&\leq K\wkt
c\sum\limits_{\tau=-\iy}^{-1}(h_0/h_{\tau})^a(k_0/k_\tau)^{-b}\mu_{|\tau+1|}^{-\og}\nu_0^\ve\\
 &\leq K\wkt
c\sum\limits_{\tau=-\iy}^{-1} \mu_{|\tau+1|}^{-\og}\leq K\wkt cN_1
\end{align*}
and
\begin{align*}
A^{12}_\la:&=\sum\limits_{\tau=0}^\iy
\|\ma(0,\tau+1)Q_{\tau+1}\|\|B_\tau(\la)\|\|U_\la(\tau,0)\|\\
&\leq K\wkt
c\sum\limits_{\tau=0}^\iy(k_{\tau+1}/k_0)^{-b}(h_\tau/h_0)^a\nu_{|\tau+1|}^{-\og}\mu_0\leq
K\wkt c\sum\limits_{\tau=0}^\iy\nu_{|\tau+1|}^{-\og}\leq K\wkt
cN_2.
\end{align*}
Then
$$\|\pt_0(\la)+\qtt_0(\la)-\id\|\leq A^{11}_\la+A^{12}_\la \leq K\wkt c(N_1+N_2)$$
and, by \eqref{eqthyangmi}, $S_0(\la)$ is invertible for $\la\in
Y$.
\end{proof}

For $\la\in Y$ and $m\in\Z$, set
\begin{equation}\label{eqliwenting}
\begin{split}
\wpt_m(\la)&=\mab_\la(m,0)S_0(\la) P_0(\la)
S_0^{-1}(\la)\mab_\la(0,m),\\\wqt_m(\la)&=\mab_\la(m,0)S_0(\la)
Q_0(\la) S_0^{-1}(\la)\mab_\la(0,m).
\end{split}
\end{equation}
Then $\wpt_m(\la)$ and $\wqt_m(\la)$ are projections satisfying
\eqref{invarianteo} and $\wpt_m(\la)+\wqt_m(\la)=\id$.

\begin{lemma}\label{lejjj}
For $\la\in Y$, the following claims hold
\begin{equation}\label{eqooppqq}
\begin{split}
&\|\mab_\la(m,n)\wpt_n(\la)\|\leq\wkt(h_m/h_n)^a\mu_{|n|}^\ve\|\wpt_n(\la)\|,~~m\geq
n,\\
&\|\mab_\la(m,n)\wqt_n(\la)\|\leq\wkt(k_n/k_m)^{-b}\nu_{|n|}^\ve\|\wqt_n(\la)\|,~~m\leq
n.
\end{split}
\end{equation}
\end{lemma}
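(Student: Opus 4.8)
The plan is to reduce both estimates to Lemma \ref{leiii} by showing that the new projections $\wpt_n(\la),\wqt_n(\la)$ have exactly the same ranges as $\pt_n(\la),\qtt_n(\la)$, so that the decay of $\mab_\la(m,n)$ on those ranges has already been proven. The first step is to simplify \eqref{eqliwenting}. From $\pt_0(\la)P_0=\pt_0(\la)$ and $\qtt_0(\la)Q_0=\qtt_0(\la)$ (property (b$_4$)) together with $P_0+Q_0=\id$, one gets $\qtt_0(\la)P_0=\qtt_0(\la)-\qtt_0(\la)Q_0=0$ and, symmetrically, $\pt_0(\la)Q_0=0$, whence
\[
S_0(\la)P_0=(\pt_0(\la)+\qtt_0(\la))P_0=\pt_0(\la),\qquad S_0(\la)Q_0=\qtt_0(\la).
\]
Substituting into \eqref{eqliwenting} yields the compact identities
\[
\wpt_m(\la)=\mab_\la(m,0)\pt_0(\la)S_0^{-1}(\la)\mab_\la(0,m),\qquad \wqt_m(\la)=\mab_\la(m,0)\qtt_0(\la)S_0^{-1}(\la)\mab_\la(0,m).
\]

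The second step identifies the ranges. Since $\mab_\la(0,m)$ and $S_0^{-1}(\la)$ are invertible, applying the first identity to the whole space gives $\Imm\wpt_m(\la)=\mab_\la(m,0)\bigl(\Imm\pt_0(\la)\bigr)$. On the other hand, the definition $\pt_m(\la)=\mab_\la(m,0)U_\la(0,0)\mab_\la(0,m)$ with $\pt_0(\la)=U_\la(0,0)$ reads $\pt_m(\la)=\mab_\la(m,0)\pt_0(\la)\mab_\la(0,m)$, so that $\Imm\pt_m(\la)=\mab_\la(m,0)\bigl(\Imm\pt_0(\la)\bigr)$ as well. Hence $\Imm\wpt_n(\la)=\Imm\pt_n(\la)$ for every $n\in\Z$, and the same computation gives $\Imm\wqt_n(\la)=\Imm\qtt_n(\la)$.

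The last step is then immediate. For $m\geq n$ and arbitrary $x\in X$, the vector $\wpt_n(\la)x$ lies in $\Imm\wpt_n(\la)=\Imm\pt_n(\la)$, so the first estimate of Lemma \ref{leiii} gives
\[
\|\mab_\la(m,n)\wpt_n(\la)x\|\leq\wkt(h_m/h_n)^a\mu_{|n|}^\ve\|\wpt_n(\la)x\|\leq\wkt(h_m/h_n)^a\mu_{|n|}^\ve\|\wpt_n(\la)\|\,\|x\|,
\]
and taking the supremum over $\|x\|\leq1$ proves the first inequality; the second follows in the same way from the second estimate of Lemma \ref{leiii}, now with $m\leq n$ and $\Imm\wqt_n(\la)=\Imm\qtt_n(\la)$.

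I do not expect a genuine obstacle here: the proof is essentially the algebraic reduction of \eqref{eqliwenting} via (b$_4$), followed by an invertibility argument for the ranges and a one-line application of Lemma \ref{leiii}. The only point worth stating carefully is that multiplying on the right by the bounded operator $\wpt_n(\la)$ (resp. $\wqt_n(\la)$) costs precisely the factor $\|\wpt_n(\la)\|$ (resp. $\|\wqt_n(\la)\|$) that appears on the right-hand sides, so no finer control of these projections is needed at this stage; obtaining a uniform-in-$(\la,n)$ bound for $\|\wpt_n(\la)\|$, which is what finally removes this factor and produces the clean estimates \eqref{boundes} of Theorem \ref{thaaa}, is the harder task and is deferred to the subsequent step.
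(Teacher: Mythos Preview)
Your proof is correct and follows essentially the same approach as the paper: you use (b$_4$) to simplify $S_0(\la)P_0=\pt_0(\la)$, $S_0(\la)Q_0=\qtt_0(\la)$, then exploit invertibility to identify $\Imm\wpt_n(\la)=\Imm\pt_n(\la)$ and $\Imm\wqt_n(\la)=\Imm\qtt_n(\la)$, and finally invoke Lemma~\ref{leiii}. The only cosmetic difference is that the paper packages the range argument via the conjugated operator $S_m(\la)=\mab_\la(m,0)S_0(\la)\mab_\la(0,m)$ and the identity $\wpt_m(\la)S_m(\la)=\pt_m(\la)$, whereas you compute both images directly as $\mab_\la(m,0)(\Imm\pt_0(\la))$; the content is the same.
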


\begin{proof}
By (b$_4$), for $\la\in Y$, one has
\begin{align*}
&S_0(\la) P_0=(\pt_0(\la)+\qtt_0(\la))P_0=\pt_0(\la),\\&S_0(\la)
Q_0=(\pt_0(\la)+\qtt_0(\la))Q_0=\qtt_0(\la).
\end{align*}
Note that $S_m(\la)=\mab_\la(m,0)S_0(\la)\mab_\la(0,m)$ for
$m\in\Z$, then
\begin{align*}
\wpt_m(\la)S_m(\la)&=\mab_\la(m,0)S_0(\la)
P_0\mab_\la(0,m)\\&=\mab_\la(m,0)\pt_0(\la)\mab_\la(0,m)=\pt_m(\la).
\end{align*}
Similarly, $\wqt_m(\la)S_m(\la)=\qtt_m(\la)$. Whence,
$\Imm\wpt_m(\la)=\Imm\pt_m(\la)$ and
$\Imm\wqt_m(\la)=\Imm\qtt_m(\la)$ for $\la\in Y$ since $S_m(\la)$
is invertible. By Lemma \ref{leiii}, for $\la\in Y$, one has
\begin{align*}
\|\mab_\la(m,n)\wpt_n(\la)\|&\leq\|\mab_\la(m,n)|\Imm\pt_n(\la)\|\|\wpt_n(\la)\|\\&\leq\wkt(h_m/h_n)^a\mu_{|n|}^\ve\|\wpt_n(\la)\|,~~m\geq n,\\
\|\mab_\la(m,n)\wqt_n(\la)\|&\leq\|\mab_\la(m,n)|\Imm\qtt_n(\la)\|\|\wqt_n(\la)\|\\&\leq\wkt(k_n/k_m)^{-b}\nu_{|n|}^\ve\|\wqt_n(\la)\|,~~m\leq
n.
\end{align*}
\end{proof}

\begin{lemma}\label{lekkk}
For $\la\in Y$, the following claims hold
\begin{equation}\label{eqrroott}
\begin{split}
\|\wpt_m(\la)\|&\leq
[K/(1-2K\wkt c(N_1+N_2))](\mu_{|m|}^\ve+\nu_{|m|}^\ve),\\
\|\wqt_m(\la)\|&\leq [K/(1-2K\wkt
c(N_1+N_2))](\mu_{|m|}^\ve+\nu_{|m|}^\ve).
\end{split}
\end{equation}
\end{lemma}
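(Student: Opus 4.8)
The plan is to estimate $\|\wpt_m(\la)\xi\|$ and $\|\wqt_m(\la)\xi\|$ for an arbitrary $\xi\in X$ and then to solve the resulting pair of inequalities. Recall from the proof of Lemma~\ref{lejjj} that $\Imm\wpt_m(\la)=\Imm\pt_m(\la)$, $\Imm\wqt_m(\la)=\Imm\qtt_m(\la)$ and $\wpt_m(\la)+\wqt_m(\la)=\id$. First I would fix $\la,m,\xi$ and consider the two operator orbits $z_j:=\mab_\la(j,m)\bigl(\wpt_m(\la)\xi\bigr)$ for $j\ge m$ and $w_j:=\mab_\la(j,m)\bigl(\wqt_m(\la)\xi\bigr)$ for $j\le m$. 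Since $\wpt_m(\la)\xi\in\Imm\pt_m(\la)$ and $\wqt_m(\la)\xi\in\Imm\qtt_m(\la)$, Lemma~\ref{leiii} gives $\|z_j\|\le\wkt(h_j/h_m)^a\mu_{|m|}^\ve\|\wpt_m(\la)\xi\|$ and $\|w_j\|\le\wkt(k_m/k_j)^{-b}\nu_{|m|}^\ve\|\wqt_m(\la)\xi\|$, so $z$ and $w$ are bounded solutions of \eqref{eqpllinear} and therefore satisfy the variation-of-constants identities \eqref{eqsuiguangyu} and \eqref{eqwanghongyan} established in the proof of Lemma~\ref{leiii} (here the two limits in (a$_3$) are used). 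Putting the running index equal to the base point $m$ in those identities (so $z_m=\wpt_m(\la)\xi$, $w_m=\wqt_m(\la)\xi$) gives
\begin{align*}
\wpt_m(\la)\xi&=P_m\wpt_m(\la)\xi-\sum_{\tau=m}^{\iy}\ma(m,\tau+1)Q_{\tau+1}B_\tau(\la)z_\tau,\\
\wqt_m(\la)\xi&=Q_m\wqt_m(\la)\xi+\sum_{\tau=-\iy}^{m-1}\ma(m,\tau+1)P_{\tau+1}B_\tau(\la)w_\tau.
\end{align*}

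Next I would estimate the two sums. Combining \eqref{deeqbl} (which at $m=n$ yields $\|P_m\|\le K\mu_{|m|}^\ve$, $\|Q_m\|\le K\nu_{|m|}^\ve$), the perturbation bounds in (a$_2$), and the bounds on $\|z_\tau\|,\|w_\tau\|$ just recorded, together with the elementary facts $(h_\tau/h_m)^a\le1$ for $\tau\ge m$ and $(k_\tau/k_m)^{-b}\le1$ for $\tau\le m$ (valid because $a<0\le b$ and the growth rates are increasing), the first sum is at most $K\wkt c\,\mu_{|m|}^\ve\|\wpt_m(\la)\xi\|\sum_{\tau\ge m}\nu_{|\tau+1|}^{-\og}$ and the second at most $K\wkt c\,\nu_{|m|}^\ve\|\wqt_m(\la)\xi\|\sum_{\tau<m}\mu_{|\tau+1|}^{-\og}$, hence, by (a$_4$), at most $K\wkt c(N_1+N_2)\|\wpt_m(\la)\xi\|$ and $K\wkt c(N_1+N_2)\|\wqt_m(\la)\xi\|$ respectively. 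The same estimates bound $\|Q_m\wpt_m(\la)\xi\|$ and $\|P_m\wqt_m(\la)\xi\|$ (apply $Q_m$, resp.\ $P_m$, to the two displayed identities). Writing $P_m\wpt_m(\la)\xi=P_m\xi-P_m\wqt_m(\la)\xi$ with $\|P_m\xi\|\le K\mu_{|m|}^\ve\|\xi\|$, and symmetrically on the other side, I arrive at
\begin{align*}
\bigl(1-K\wkt c(N_1+N_2)\bigr)\|\wpt_m(\la)\xi\|&\le K\mu_{|m|}^\ve\|\xi\|+K\wkt c(N_1+N_2)\|\wqt_m(\la)\xi\|,\\
\bigl(1-K\wkt c(N_1+N_2)\bigr)\|\wqt_m(\la)\xi\|&\le K\nu_{|m|}^\ve\|\xi\|+K\wkt c(N_1+N_2)\|\wpt_m(\la)\xi\|.
\end{align*}

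Finally I would treat these two inequalities as a $2\times2$ linear system in $\|\wpt_m(\la)\xi\|$ and $\|\wqt_m(\la)\xi\|$: eliminating one unknown and using $K\wkt c(N_1+N_2)<1/2$ (which follows from \eqref{eqthyangmi} and \eqref{eqthaaa}) gives $\bigl(1-2K\wkt c(N_1+N_2)\bigr)\|\wpt_m(\la)\xi\|\le K(\mu_{|m|}^\ve+\nu_{|m|}^\ve)\|\xi\|$, and likewise for $\wqt_m(\la)\xi$; since $\xi$ is arbitrary this is exactly \eqref{eqrroott}. I expect the main difficulty to be the bookkeeping in the second step: the dichotomy and perturbation estimates must be arranged so that the summable tails of (a$_4$) appear multiplied by $\|\wpt_m(\la)\xi\|$ (rather than by a factor growing in $m$), since only then can those terms be absorbed into the left-hand side; and one should verify that the operator-valued orbits $z,w$ do obey \eqref{eqsuiguangyu} and \eqref{eqwanghongyan}, which holds because the derivation there used nothing beyond linearity, the dichotomy bounds, and (a$_3$).
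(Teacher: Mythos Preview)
Your proposal is correct and follows essentially the same route as the paper: both arguments feed the bounded orbits $\mab_\la(\cdot,m)\wpt_m(\la)\xi$ and $\mab_\la(\cdot,m)\wqt_m(\la)\xi$ into the variation-of-constants identities \eqref{eqsuiguangyu} and \eqref{eqwanghongyan}, evaluate at the base point to isolate $Q_m\wpt_m(\la)\xi$ and $P_m\wqt_m(\la)\xi$, and then close a coupled pair of inequalities using $\|P_m\|\le K\mu_{|m|}^\ve$, $\|Q_m\|\le K\nu_{|m|}^\ve$ and (a$_4$). The only cosmetic difference is the last algebraic step: the paper writes $\wpt_m(\la)-P_m=Q_m\wpt_m(\la)-P_m\wqt_m(\la)$ and adds the two resulting bounds, whereas you write $P_m\wpt_m(\la)\xi=P_m\xi-P_m\wqt_m(\la)\xi$ and eliminate; both yield the factor $1-2K\wkt c(N_1+N_2)$.
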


\begin{proof}
For $\xi\in X$ and $\la\in Y$, set
$$z^1_m=\mab_\la(m,n)\wpt_n(\la)\xi,~~m\geq
n,~~~~z^2_m=\mab_\la(m,n)\wqt_n(\la)\xi,~~m\leq n.
$$
Then, by Lemma \ref{lejjj}, $(z^1_m)_{m\geq n}$ and
$(z^2_m)_{m\leq n}$ are bounded solutions of \eqref{eqpllinear}.
By \eqref{eqsuiguangyu} and \eqref{eqwanghongyan}, one has
\begin{align*}
\wpt_m(\la)\mab_\la(m,n)\xi&=\ma(m,n)P_n\wpt_n(\la)\xi\\&\quad+\sum\limits_{\tau=n}^{m-1}\ma(m,\tau+1)P_{\tau+1}B_\tau(\la)\wpt_\tau(\la)\mab_\la(\tau,n)\xi\\
&\quad-\sum\limits_{\tau=m}^\iy
\ma(m,\tau+1)Q_{\tau+1}B_\tau(\la)\wpt_\tau(\la)\mab_\la(\tau,n)\xi
\end{align*}
and
\begin{align*}
\wqt_m(\la)\mab_\la(m,n)\xi&=\ma(m,n)Q_n\wqt_n(\la)\xi\\&\quad+\sum\limits_{\tau=-\iy}^{m-1}\ma(m,\tau+1)P_{\tau+1}
B_\tau(\la)\wqt_\tau(\la)\mab_\la(\tau,n)\xi\\
&\quad-\sum\limits_{\tau=m}^{n-1}
\ma(m,\tau+1)Q_{\tau+1}B_\tau(\la)\wqt_\tau\mab(\tau,n)\xi.
\end{align*}
Taking $m=n$ leads to
\begin{align*}
&Q_m\wpt_m(\la)\xi=-\sum\limits_{\tau=m}^\iy
\ma(m,\tau+1)Q_{\tau+1}B_\tau(\la)\wpt_\tau(\la)\mab_\la(\tau,m)\xi,\\
&P_m\wqt_m(\la)\xi=\sum\limits_{\tau=-\iy}^{m-1}\ma(m,\tau+1)P_{\tau+1}B_\tau(\la)\wqt_\tau(\la)\mab_\la(\tau,m)\xi.
\end{align*}
By Lemma \ref{lejjj},
\begin{align*}
\|Q_m\wpt_m(\la)\|&\leq K\wkt
c\|\wpt_m(\la)\|\sum\limits_{\tau=m}^\iy(k_{\tau+1}/k_m)^{-b}
(h_\tau/h_m)^a\nu_{|\tau+1|}^{-\og}\mu_{|m|}^\ve\\&\leq K\wkt
cN_2\|\wpt_m(\la)\|
\end{align*}
and
\begin{align*}
\|P_m\wqt_m(\la)\|&\leq K\wkt
c\|\wqt_m(\la)\|\sum\limits_{\tau=-\iy}^{m-1}(h_m/h_\tau)^a(k_m/k_\tau)^{-b}\mu_{|\tau+1|}^{-\og}\nu_{|m|}^\ve\\
&\leq K\wkt c N_1\|\wqt_m(\la)\|.
\end{align*}
Since $\|P_m\|\leq K \mu_{|m|}^\ve$ and $\|Q_m\|\leq K
\nu_{|m|}^\ve$, one has
\begin{align*}
\|\wpt_m(\la)\|&\leq\|\wpt_m(\la)-P_m\|+\|P_m\|\\&=\|\wpt_m(\la)-P_m\wpt_m(\la)-P_m+P_m\wpt_m(\la)\|+\|P_m\|\\
&=\|Q_m\wpt_m(\la)-P_m\wqt_m(\la)\|+\|P_m\|\\&\leq\|Q_m\wpt_m(\la)\|+\|P_m\wqt_m(\la)\|+\|P_m\|\\
&\leq K\wkt c(N_1+N_2) (\|\wpt_m(\la)\|
+\|\wqt_m(\la)\|)+K\mu_{|m|}^\ve
\end{align*}
and
\begin{align*}
\|\wqt_m(\la)\|&\leq\|\wqt_m(\la)-Q_m\|+\|Q_m\|=\|\wpt_m(\la)-P_m\|+\|Q_m\|\\
&\leq K\wkt c(N_1+N_2) (\|\wpt_m(\la)\|
+\|\wqt_m(\la)\|)+K\nu_{|m|}^\ve.
\end{align*}
Therefore, for $\la\in Y$,
$$
\|\wpt_m(\la)\|+\|\wqt_m(\la)\|\leq 2K\wkt c(N_1+N_2)
(\|\wpt_m(\la)\| +\|\wqt_m(\la)|)+K(\mu_{|m|}^\ve+\nu_{|m|}^\ve).
$$
\end{proof}

By Lemma \ref{lejjj} and Lemma \ref{lekkk}, \eqref{boundes} holds.
In order to complete the proof, we only need to show that the
stable subspace $\wpt_{\la}(X)$ and the unstable subspace
$\wqt_\la(X)$ are Lipschitz continuous in $\la$.

In fact, from Lemmas \ref{leaaa1}, and \ref{leaaa12}, it follows
that $U_{\lambda}$ and $V_{\lambda}$ are Lipschitz continuous with
respect to $\la$. Note that $\mab_\la$ is Lipschitz continuous in
$\la$, hence $\pt_m(\lambda)$ and $\qtt_m(\lambda)$ are Lipschitz
continuous  in $\la$. Moreover, since $Y$ is finite-dimensional,
$S_0(\la)$ and $S^{-1}_0(\la)$ are both Lipschitz continuous in
$\la$. Then \eqref{eqliwenting} implies that the above claim is
valid.

\section{Nonlinear perturbations: Grobman-Hartman theorem}\label{ghsection}
\noindent In the nonlinear perturbation theory, the linearization
of dynamical systems stands as a fundamental step and as a
principle tool in the study of local behavior of a given nonlinear
flow. The classical Grobman-Hartman theorem, as the well-known
linearization theorem, states that, around a hyperbolic fixed
point, the map or the flow of a nonlinear dynamical system is
topologically conjugate to the corresponding linear map or flow in
some open neighborhood of the origin, that is, there exits a
homeomorphism such that both maps or flows can be transformed into
each other. In this section, with the help of nonuniform
$(h,k,\mu,\nu)$-dichotomy, we devote to establishing a new version
of the Grobman-Hartman theorem for a very general nonuniformly
hyperbolic linear operators under nonlinear perturbations.

To facilitate the discussion below, define
\begin{align*}
\Delta_1:&=\left\{\{u_m\}_{m\in\Z}\in\Delta\left|\ba{ll}~\mbox{there
exist positive constants}~l_1\in\R~\mbox{and}~\og_1\in\Z
\\ \mbox{ such that any interval of length}~ l_1 ~\mbox{of}~
\R ~\mbox{contains at}\\ \mbox{most}~\og_1 ~\mbox{elements of}~
\{1/u_m\}_{m\in\Z}\ea\right\},\right.\\
\Delta_2:&=\left\{\{u_m\}_{m\in\Z}\in\Delta\left|\ba{ll}~\mbox{there
exist positive constants}~l_2\in\R~\mbox{and}~\og_2\in\Z
\\ \mbox{such that any interval of length}~ l_2 ~\mbox{of}~
\R ~\mbox{contains at} \\ \mbox{ most}~ \og_2 ~\mbox{elements of}~
\{u_m\}_{m\in\Z}\ea\right\}.\right.
\end{align*}
For any constant $\tilde{l}<-1$, $n,m\in \Z$, $l_1=1$ and
$l_2=u_n$, one has
\begin{equation}\label{eqxwqa}
\sum_{\tau=n}^\iy u_\tau^{\tilde{l}}\le
\og_2u_n^{\tilde{l}}+\og_2(2u_n)^{\tilde{l}}+\cdots=\og_2u_n^{\tilde{l}}\zeta_{\tilde{l}}
\end{equation}
and
\begin{equation}\label{eqxwqb}
\sum_{\tau=-\iy}^{m-1}(u_m/u_\tau)^{\tilde{l}}\le
\og_11^{\tilde{l}}+\og_12^{\tilde{l}}+\cdots=\og_1\zeta_{\tilde{l}},
\end{equation}
where $\zeta_{\tilde{l}}:=\sum_{\tau=1}^\infty \tau^{\tilde{l}}$.

Consider the nonlinear perturbed system of \eqref{eqlinear}
\begin{equation}\label{eqnonlinear}
x_{m+1}=A_mx_m+f_m(x_m).
\end{equation}

\begin{definition}[\cite{Kurzweil1991,Palmer1979}] \eqref{eqlinear} and \eqref{eqnonlinear}
are said to be topologically equivalent if there exist bounded
operators $H_m:X\ra X, m\in\Z$ with the following properties,
\begin{enumerate}
\item[(i)] if $\|x\|\ra\iy$, then $\|H_m(x)\|\ra\iy$ uniformly
with respect to $m\in\Z$;

\item[(ii)] for each fixed $m$, $H_m$ is a homeomorphism of $X$
into $X$;

\item[(iii)] the operators $L_m=H_m^{-1}$ also have property (i);

\item[(iv)] if $x_m$ is a solution of \eqref{eqnonlinear}, then
$H_m(x_m)$ is a solution of \eqref{eqlinear}.
\end{enumerate}
\end{definition}

\begin{theorem}\label{ghtheorem}
Assume that
\begin{itemize}
\item[\tu{(c$_1$)}] the sequence of linear operators
$(A_m)_{m\in\Z}$ admits a nonuniform $(h,k,\mu,\nu)$-dichotomy
with $|a|,b>1$ on $\Z$ and $h\in\Delta_1, k\in\Delta_2$;

\item[\tu{(c$_2$)}] there exist positive constants
$\hat{\al},\hat{\ga}$ such that, for any $x,x^1,x^2\in X$ and
$m\in\Z$,
\begin{equation}\label{eqliuxiu}
\begin{split}
&\|f_m(x)\|\leq\hat{\al}\min\{\mu_{|m+1|}^{-\ve},\nu_{|m+1|}^{-\ve}\},\\
&\|f_m(x^1)-f_m(x^2)\|\leq
\hat{\ga}\min\{\mu_{|m+1|}^{-\ve},\nu_{|m+1|}^{-\ve}\}\|x^1-x^2\|;
\end{split}
\end{equation}

\item[\tu{(c$_3$)}] $K\hat{\ga}(\og_1\zeta_a+\og_2\zeta_{-b})<1$.
\end{itemize}
Then \eqref{eqnonlinear} is topologically equivalent to
\eqref{eqlinear} and the equivalent operators $H_m$ satisfy
$$
\|H_m(x)-x\|\leq
K\hat{\al}(\og_1\zeta_a+\og_2\zeta_{-b}),~~m\in\Z,~~x\in X.
$$
\end{theorem}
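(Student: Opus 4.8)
The plan is to construct the conjugacy $H_m=\id+h_m$ by an explicit Green‑function formula along nonlinear orbits, to obtain its candidate inverse $L_m=\id+g_m$ by a contraction, and then to show the two are mutually inverse without ever invoking a ``no nontrivial bounded solution'' lemma. Throughout, for $(n,\xi)\in\Z\times X$ let $y^{(n,\xi)}$ denote the solution of \eqref{eqnonlinear} with $y^{(n,\xi)}_n=\xi$. I would first set
\[
h_m(x)=-\sum_{\tau=-\iy}^{m-1}\ma(m,\tau+1)P_{\tau+1}f_\tau\big(y^{(m,x)}_\tau\big)+\sum_{\tau=m}^{\iy}\ma(m,\tau+1)Q_{\tau+1}f_\tau\big(y^{(m,x)}_\tau\big).
\]
By \eqref{deeqbl} and the first estimate in \eqref{eqliuxiu}, the nonuniformity factors cancel ($\mu_{|\tau+1|}^{\ve}\mu_{|\tau+1|}^{-\ve}$ in the $P$-sum, $\nu_{|\tau+1|}^{\ve}\nu_{|\tau+1|}^{-\ve}$ in the $Q$-sum), so the two series are dominated by $K\hat\al\sum_{\tau<m}(h_m/h_{\tau+1})^a$ and $K\hat\al\sum_{\tau\ge m}(k_{\tau+1}/k_m)^{-b}$; since $|a|,b>1$ and $h\in\Delta_1,\ k\in\Delta_2$, \eqref{eqxwqb} and \eqref{eqxwqa} give $\|h_m(x)\|\le K\hat\al(\og_1\zeta_a+\og_2\zeta_{-b})$ uniformly in $m,x$. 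This is the asserted bound for $\|H_m(x)-x\|$ and also yields property (i) of the definition. Continuity of $x\mapsto h_m(x)$ comes from continuity of $x\mapsto y^{(m,x)}_\tau$, continuity of $f_\tau$, and the uniform geometric convergence of the series. Finally, applying $A_m$, using $y^{(m+1,A_mx+f_m(x))}=y^{(m,x)}$ and $P_{m+1}+Q_{m+1}=\id$, a telescoping of the index gives $h_{m+1}(A_mx+f_m(x))=A_mh_m(x)-f_m(x)$, i.e. $H_{m+1}(A_mx+f_m(x))=A_mH_m(x)$, so $H$ maps solutions of \eqref{eqnonlinear} to solutions of \eqref{eqlinear} — property (iv).

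Next I would define $g$ as the unique fixed point of the operator
\[
(\mathcal Tg)_m(x)=\sum_{\tau=-\iy}^{m-1}\ma(m,\tau+1)P_{\tau+1}f_\tau\big(v^{(m,x)}_\tau+g_\tau(v^{(m,x)}_\tau)\big)-\sum_{\tau=m}^{\iy}\ma(m,\tau+1)Q_{\tau+1}f_\tau\big(v^{(m,x)}_\tau+g_\tau(v^{(m,x)}_\tau)\big),
\]
with $v^{(m,x)}_\tau=\ma(\tau,m)x$, acting on the complete space of sequences of continuous maps $g_\bullet:X\to X$ with $\|g\|:=\sup_{m,x}\|g_m(x)\|<\iy$. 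Exactly as above the two estimates in \eqref{eqliuxiu} give $\|\mathcal T0\|\le K\hat\al(\og_1\zeta_a+\og_2\zeta_{-b})$ and Lipschitz constant $K\hat\ga(\og_1\zeta_a+\og_2\zeta_{-b})<1$ by (c$_3$); hence $\mathcal T$ has a unique bounded continuous fixed point $g$, and a telescoping identical to the one for $h$ shows $g_{m+1}(A_mx)=A_mg_m(x)+f_m(x+g_m(x))$, i.e. $L_m=\id+g_m$ maps solutions of \eqref{eqlinear} to solutions of \eqref{eqnonlinear}. Boundedness of $g$ will give property (iii) once $L_m=H_m^{-1}$ is established, and continuity of $g$ gives continuity of $L_m$.

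The heart of the proof is that $H_m$ and $L_m$ are mutually inverse. For $H_m\circ L_m=\id$ I would argue directly: fix $m,x$ and put $w=L_m(x)=x+g_m(x)$; since $L$ maps the linear orbit $\tau\mapsto v^{(m,x)}_\tau$ to a solution of \eqref{eqnonlinear} taking the value $w$ at time $m$, uniqueness forces $y^{(m,w)}_\tau=v^{(m,x)}_\tau+g_\tau(v^{(m,x)}_\tau)$; substituting this into the formula for $h_m(w)$ reproduces precisely $-(\mathcal Tg)_m(x)=-g_m(x)$, whence $H_m(L_m(x))=w+h_m(w)=x$. In particular $H_m$ is surjective and $L_m$ is injective. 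To see $H_m$ is injective, suppose $H_m(x_1)=H_m(x_2)$; by property (iv) $H$ maps the two solutions $y^{(m,x_1)}$ and $y^{(m,x_2)}$ of \eqref{eqnonlinear} onto the same solution of \eqref{eqlinear}, so $\delta_\tau:=y^{(m,x_1)}_\tau-y^{(m,x_2)}_\tau=-\big(h_\tau(y^{(m,x_1)}_\tau)-h_\tau(y^{(m,x_2)}_\tau)\big)$ is uniformly bounded; inserting the formula for $h_\tau$, estimating the increments of $f_\tau$ by the Lipschitz bound in \eqref{eqliuxiu} (again the $\mu^\ve,\nu^\ve$ cancel) and summing with \eqref{eqxwqa}–\eqref{eqxwqb} yields $\sup_\tau\|\delta_\tau\|\le K\hat\ga(\og_1\zeta_a+\og_2\zeta_{-b})\sup_\tau\|\delta_\tau\|$, which by (c$_3$) forces $\delta\equiv0$, so $x_1=x_2$. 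Thus $H_m$ is a continuous bijection with continuous inverse $L_m$, giving properties (ii) and (iii); together with (i) and (iv) the two systems are topologically equivalent, with the bound already obtained in the first step.

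I expect the main obstacle to be exactly this mutual‑inverse step: one must organize the composition $H_m\circ L_m$ so that the explicit Green‑function formula for $h$ collapses against the fixed‑point formula for $g$, which requires keeping precise track of whether each $f_\tau$ is being evaluated along a linear or a nonlinear orbit and exploiting the uniqueness of orbits of \eqref{eqlinear} and \eqref{eqnonlinear}; this is what lets the argument avoid any extra decay hypothesis of the type used in Section \ref{robustnesssection}. Extracting injectivity from the self‑referential inequality for $\delta_\tau$ is the other delicate point, but it is again driven by the $\mu^{-\ve},\nu^{-\ve}$ weights built into hypothesis (c$_2$) together with (c$_3$). By comparison, the convergence, continuity and intertwining verifications for $h$ and $g$ are routine consequences of \eqref{deeqbl}, \eqref{eqxwqa} and \eqref{eqxwqb}.
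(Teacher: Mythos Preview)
Your proof is correct and follows the same overall architecture as the paper's: the same Green-function formula for $h_m$ (the paper's Lemma~\ref{lezbj}, (d$_1$)), the same contraction producing $g_m$ (the paper's $l_m$, Lemma~\ref{lezbj}, (d$_2$)), the same telescoping verifications that $H$ and $L$ intertwine the two dynamics (Lemma~\ref{leyjj}, (e$_1$)--(e$_2$)), and the same self-referential inequality for your $\delta_\tau$ (which is exactly the content of Lemma~\ref{lesss}) to get injectivity of $H_m$.

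The one genuine difference is how $H_m\circ L_m=\id$ is obtained. The paper shows that $H_m(L_m(y_m))-y_m$ is a \emph{bounded solution of the linear equation} \eqref{eqlinear} and then asserts it vanishes because $(A_m)$ admits a nonuniform $(h,k,\mu,\nu)$-dichotomy. Your argument instead substitutes the orbit identity $y^{(m,L_m(x))}_\tau=v^{(m,x)}_\tau+g_\tau(v^{(m,x)}_\tau)$ into the explicit formula for $h_m$ and recognizes the result as $-(\mathcal Tg)_m(x)=-g_m(x)$, collapsing $H_m(L_m(x))$ to $x$ by pure algebra. What this buys you is that you never need the statement ``the only bounded orbit of \eqref{eqlinear} is zero'', which under the bare hypotheses (c$_1$)--(c$_3$) would require the additional decay $k_m^{-b}\nu_{|m|}^{\ve}\to0$ and $h_m^{-a}\mu_{|m|}^{\ve}\to0$ (cf.\ assumption (a$_3$) in Section~\ref{robustnesssection}); the paper uses this implicitly in Lemma~\ref{lezbj} and in (e$_3$). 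Your direct computation sidesteps that point entirely, so in this respect your route is a bit cleaner. What the paper's route buys is modularity: once the ``no nontrivial bounded solution'' fact is in hand, both halves of the mutual-inverse step are instances of the same principle (bounded solution of a suitable equation $\Rightarrow$ zero), whereas you treat the two halves by different mechanisms.
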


In the rest of this section, we always assume that (c$_1$)-(c$_3$)
are satisfied. Let $X_m(n,x_n)$ be the solution of
\eqref{eqnonlinear} with $X_n=x_n$ and $Y_m(n,y_n)$ be the
solution of \eqref{eqlinear} with $Y_n=y_n$. We first prove some
auxiliary results.

\begin{lemma}\label{lezbj}
For any fixed $(\bar{m}, \xi)\in\Z\times X$,
\begin{itemize}
\item[\tu{(d$_1$)}] the system
\begin{equation}\label{eqzmzaa}
z_{m+1}=A_mz_m-f_m(X_m(\bar{m},\xi)),~m\in\Z
\end{equation}
has a unique bounded solution $(h_m(\bar{m},\xi))_{m\in\Z}$ and
$$\|h_m(\bar{m},\xi)\|\leq
K\hat{\al}(\og_1\zeta_a+\og_2\zeta_{-b}),~~m\in\Z;$$

\item[\tu{(d$_2$)}] the system
\begin{equation}\label{eqzwaa}
z_{m+1}=A_mz_m+f_m(Y_m(\bar{m},\xi)+z_m), \quad m\in\Z
\end{equation}
has a unique bounded solution $(l_m(\bar{m},\xi))_{m\in\Z}$ and
$$\|l_m(\bar{m},\xi)\|\leq
K\hat{\al}(\og_1\zeta_a+\og_2\zeta_{-b}),~m\in\Z.$$
\end{itemize}

\end{lemma}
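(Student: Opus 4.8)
The plan is to establish both (d$_1$) and (d$_2$) through the same Green-function construction attached to the nonuniform $(h,k,\mu,\nu)$-dichotomy of $(A_m)_{m\in\Z}$. Fix $(\bar m,\xi)\in\Z\times X$ and let $\mathcal{E}$ be the Banach space of two-sided sequences $w=(w_m)_{m\in\Z}\subset X$ with $\|w\|_\iy:=\sup_{m\in\Z}\|w_m\|<\iy$. For a forcing sequence $(g_\tau)_{\tau\in\Z}$ define
\[
(\mathcal{T}g)_m=\s_{\tau=-\iy}^{m-1}\ma(m,\tau+1)P_{\tau+1}g_\tau-\s_{\tau=m}^{\iy}\ma(m,\tau+1)Q_{\tau+1}g_\tau,
\]
which is the natural candidate for the unique bounded solution on $\Z$ of $z_{m+1}=A_mz_m+g_m$ (the same representation as \eqref{eqsuiguangyu} in the proof of Lemma \ref{leiii}, but now a genuine two-sided series, with no initial-value term). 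For (d$_1$) the forcing $g_\tau=-f_\tau(X_\tau(\bar m,\xi))$ is a fixed sequence once $(\bar m,\xi)$ is fixed, and I would set $h_m(\bar m,\xi)=(\mathcal{T}g)_m$. For (d$_2$) the nonlinearity depends on the unknown, so I would instead study the map $\mathcal{N}:\mathcal{E}\to\mathcal{E}$, $(\mathcal{N}z)_m=(\mathcal{T}g^z)_m$ with $g^z_\tau=f_\tau(Y_\tau(\bar m,\xi)+z_\tau)$, and seek a fixed point.

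The core is a single estimate that gives convergence of the series, the uniform bound, and the contraction constant at once. By (c$_2$) the forcing in both cases satisfies $\|g_\tau\|\le\hat{\al}\min\{\mu_{|\tau+1|}^{-\ve},\nu_{|\tau+1|}^{-\ve}\}$, so the nonuniformity factors $\mu_{|\tau+1|}^{\ve}$ and $\nu_{|\tau+1|}^{\ve}$ coming from the dichotomy bounds \eqref{deeqbl} cancel against those in $\|g_\tau\|$, leaving the pure-growth estimates
\[
\|\ma(m,\tau+1)P_{\tau+1}g_\tau\|\le K\hat{\al}\,(h_m/h_{\tau+1})^{a}\ (\tau\le m-1),\qquad
\|\ma(m,\tau+1)Q_{\tau+1}g_\tau\|\le K\hat{\al}\,(k_m/k_{\tau+1})^{b}\ (\tau\ge m).
\]
Since $|a|>1$, $b>1$, $h\in\Delta_1$ and $k\in\Delta_2$, the summability bounds \eqref{eqxwqa}--\eqref{eqxwqb} apply and, after the routine reindexing of the sums, give $\|(\mathcal{T}g)_m\|\le K\hat{\al}(\og_1\zeta_a+\og_2\zeta_{-b})$ uniformly in $m$. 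Replacing $g_\tau$ by $f_\tau(Y_\tau+z^1_\tau)-f_\tau(Y_\tau+z^2_\tau)$ and using the Lipschitz part of (c$_2$), the very same computation yields $\|\mathcal{N}z^1-\mathcal{N}z^2\|_\iy\le K\hat{\ga}(\og_1\zeta_a+\og_2\zeta_{-b})\|z^1-z^2\|_\iy$, and $K\hat{\ga}(\og_1\zeta_a+\og_2\zeta_{-b})<1$ by (c$_3$).

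With these estimates the rest is routine. For (d$_1$): a direct telescoping computation shows $(\mathcal{T}g)_{m+1}=A_m(\mathcal{T}g)_m+g_m$, so $h_m(\bar m,\xi):=(\mathcal{T}g)_m$ solves \eqref{eqzmzaa} and obeys the asserted bound; for uniqueness, if $z$ is any bounded solution then projecting by $P_m$ and $Q_m$, iterating as in the proof of Lemma \ref{leiii}, and letting the indices run to $\pm\iy$ — the boundary terms $\ma(m,n)P_nz_n$ and $\ma(n,m)Q_mz_m$ tending to $0$ by boundedness of $z$ together with the dichotomy estimates — forces $z=\mathcal{T}g$. For (d$_2$): since $\mathcal{N}$ is a contraction of $\mathcal{E}$ by the previous paragraph, the Banach fixed point theorem yields a unique $l(\bar m,\xi)\in\mathcal{E}$ with $\mathcal{N}l=l$; the representation argument again shows every bounded solution of \eqref{eqzwaa} is a fixed point of $\mathcal{N}$, hence equals $l(\bar m,\xi)$, and since $\mathcal{N}$ maps all of $\mathcal{E}$ into the ball of radius $K\hat{\al}(\og_1\zeta_a+\og_2\zeta_{-b})$ (the bound on $\|\mathcal{N}z\|_\iy$ uses only $\|f_\tau\|\le\hat{\al}\min\{\cdots\}$, not $\|z\|_\iy$), the fixed point satisfies the stated bound. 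The main obstacle is the estimate of the second paragraph — controlling the two-sided infinite sums over $\Z$ and checking that the cancellation of $\mu^{\ve},\nu^{\ve}$ genuinely reduces everything to the convergent pure-growth series \eqref{eqxwqa}--\eqref{eqxwqb}; once that is in place, solving the equation and, for (d$_2$), the contraction argument are immediate.
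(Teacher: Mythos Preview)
Your proposal is correct and follows essentially the same approach as the paper: the same Green-type operator $(\mathcal{T}g)_m$ is written down explicitly, the same cancellation of the nonuniformity weights $\mu_{|\tau+1|}^{\ve},\nu_{|\tau+1|}^{\ve}$ against the bounds in (c$_2$) is used, and the resulting pure-growth sums are controlled by \eqref{eqxwqa}--\eqref{eqxwqb} exactly as in the paper. The only cosmetic differences are that the paper runs the contraction for (d$_2$) on the closed ball $\Og_3=\{z:\|z\|\le K\hat{\al}(\og_1\zeta_a+\og_2\zeta_{-b})\}$ and then argues uniqueness among all bounded solutions separately, whereas you run it directly on all of $\mathcal{E}$ (which is slightly cleaner since, as you note, $\|\mathcal{N}z\|_\iy$ is bounded independently of $\|z\|_\iy$); and for uniqueness in (d$_1$) the paper simply invokes the dichotomy in one sentence, while you sketch the standard projection/limit argument.
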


\begin{proof}
Direct calculations show that
\begin{align*}
h_m(\bar{m},\xi)&=-\sum\limits_{\tau=-\iy}^{m-1}\ma(m,\tau+1)P_{\tau+1}f_\tau(X_\tau(\bar{m},\xi))\\&\quad+\sum\limits_{\tau=m}^\iy
\ma(m,\tau+1)Q_{\tau+1}f_\tau(X_\tau(\bar{m},\xi))
\end{align*}
is a solution of \eqref{eqzmzaa}. By \eqref{eqliuxiu}, for any
$m\in\Z$, one has
\begin{align*}
\|h_m(\bar{m},\xi)\|&=\sum\limits_{\tau=-\iy}^{m-1}\|\ma(m,\tau+1)P_{\tau+1}\|\|f_\tau(X_\tau(\bar{m},\xi))\|\\
&\quad+\sum\limits_{\tau=m}^\iy \|\ma(m,\tau+1)Q_{\tau+1}\|\|f_\tau(X_\tau(\bar{m},\xi))\|\\
&\leq K\hat{\al}\left(\sum\limits_{\tau=-\iy}^{m-1}(h_m/h_{\tau+1})^{a}+k_m^b\sum\limits_{\tau=m}^\iy k_{\tau+1}^{-b}\right)\\
&\leq K\hat{\al}(\og_1\zeta_a+\og_2\zeta_{-b}).
\end{align*}
Since the sequence of linear operators $(A_m)_{m\in\Z}$ admits a
nonuniform $(h,k,\mu,\nu)$-dichotomy on $\Z$,
$(h_m(\bar{m},\xi))_{m\in\Z}$ is the unique bounded solution of
\eqref{eqzmzaa}.

Set
$$
\Og_3:=\{z:\Z\ra X|\|z\|\leq
K\hat{\al}(\og_1\zeta_a+\og_2\zeta_{-b})\},
$$
where $\|z\|:=\sup_{m\in\Z}\|z_m\|$. It is not difficult to show
that $(\Og_3,\|\cdot\|)$ is a Banach space. Define an operator $J$
on $\Og_3$ by
\begin{align*}
Jz_m&=\sum\limits_{\tau=-\iy}^{m-1}\ma(m,\tau+1)P_{\tau+1}f_\tau(Y_\tau(\bar{m},\xi)+z_\tau)\\&\quad-\sum\limits_{\tau=m}^\iy
\ma(m,\tau+1)Q_{\tau+1}f_\tau(Y_\tau(\bar{m},\xi)+z_\tau).
\end{align*}
By (c$_2$) and (c$_3$), for any $z,z^1,z^2\in\Og_3$ and $m\in\Z$,
one has
\begin{align*}
\|Jz_m\|&\leq
K\hat{\al}\left(\sum\limits_{\tau=-\iy}^{m-1}(h_m/h_{\tau+1})^{a}+k_m^b\sum\limits_{\tau=m}^\iy
k_{\tau+1}^{-b}\right)\leq
K\hat{\al}(\og_1\zeta_a+\og_2\zeta_{-b})
\end{align*}
and
\begin{align*}
\|Jz_m^1-Jz_m^2\|&\leq
K\hat{\ga}\left(\sum\limits_{\tau=-\iy}^{m-1}(h_m/h_{\tau+1})^{a}
+k_m^b\sum\limits_{\tau=m}^\iy k_{\tau+1}^{-b}\right)\\&\leq
K\hat{\ga}(\og_1\zeta_a+\og_2\zeta_{-b})\|z^1-z^2\|,
\end{align*}
which imply that $J(\Og_3)\subset\Og_3$ and $J$ is a contraction.
Therefore, $J$ has a unique fixed point $(l_m)_{m\in\Z}$, i.e.,
\begin{align*}
l_m(\bar{m},\xi)&=\sum\limits_{\tau=-\iy}^{m-1}\ma(m,\tau+1)P_{\tau+1}f_\tau(Y_\tau(\bar{m},\xi)+l_\tau)\\&\quad-\sum\limits_{\tau=m}^\iy
\ma(m,\tau+1)Q_{\tau+1}f_\tau(Y_\tau(\bar{m},\xi)+l_\tau),
\end{align*}
which is a bounded solution of \eqref{eqzwaa}.

Next, we prove that  $(l_m(\bar{m},\xi))_{m\in\Z}$ is unique in
$X$. Assume that there is another bounded solution
$(l^0_m(\bar{m},\xi))_{m\in\Z}$ of \eqref{eqzwaa}, which is
written as
\begin{align*}
l^0_m(\bar{m},\xi)&=\sum\limits_{\tau=-\iy}^{m-1}\ma(m,\tau+1)P_{\tau+1}f_\tau(Y_\tau(\bar{m},\xi)+l^0_\tau)\\&\quad-\sum\limits_{\tau=-\iy}^\iy
\ma(m,\tau+1)Q_{\tau+1}f_\tau(Y_\tau(\bar{m},\xi)+l^0_\tau).
\end{align*}
Proceeding in a manner similar to the above arguments, we have
$$
\|l-l^0\|\leq K\hat{\ga}(\og_1\zeta_a+\og_2\zeta_{-b})\|l-l^0\|.
$$
Then, by (c$_3$), one has $l_m\equiv l^0_m$ for $m\in\Z$.
Therefore, $(l_m(\bar{m},\xi))_{m\in\Z}$ is the unique bounded
solution of \eqref{eqzwaa} with
$$\|l_m(\bar{m},\xi)\|\leq
K\hat{\al}(\og_1\zeta_a+\og_2\zeta_{-b}),~~m\in\Z.$$
\end{proof}

\begin{lemma}\label{lesss}
Let $(x_m)_{m\in\Z}$ be any solution of \eqref{eqnonlinear}. Then
$z_m\equiv0$ is the unique bounded solution of
\begin{equation}\label{eqswk}
z_{m+1}=A_mz_m+f_m(x_m+z_m)-f_m(x_m).
\end{equation}
\end{lemma}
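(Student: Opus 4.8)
The plan is to reduce the statement to a Lyapunov--Perron fixed point equation and then close an estimate using (c$_2$)--(c$_3$), in the same spirit as the uniqueness arguments of Lemma \ref{lezbj}. First, $z_m\equiv 0$ is trivially a bounded solution of \eqref{eqswk}, since for it the term $f_m(x_m+z_m)-f_m(x_m)$ vanishes identically; the whole content is therefore the \emph{uniqueness}. So let $(z_m)_{m\in\Z}$ be an arbitrary bounded solution, set $\|z\|=\sup_{m\in\Z}\|z_m\|$, and write $g_\tau:=f_\tau(x_\tau+z_\tau)-f_\tau(x_\tau)$, so that \eqref{eqswk} becomes $z_{m+1}=A_mz_m+g_m$ with $\|g_\tau\|\le\hat\ga\min\{\mu_{|\tau+1|}^{-\ve},\nu_{|\tau+1|}^{-\ve}\}\,\|z\|$ by (c$_2$).

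The principal step is to establish, for every bounded solution, the representation
\begin{equation}\label{eqlpz}
z_m=\sum_{\tau=-\iy}^{m-1}\ma(m,\tau+1)P_{\tau+1}g_\tau-\sum_{\tau=m}^{\iy}\ma(m,\tau+1)Q_{\tau+1}g_\tau,\qquad m\in\Z.
\end{equation}
This is obtained exactly as in Lemma \ref{lezbj}. Decomposing $z_m=P_mz_m+Q_mz_m$ and using the invariance relation $P_{\tau+1}\ma(\tau+1,\tau)=\ma(\tau+1,\tau)P_\tau$, the stable component satisfies $P_mz_m=\ma(m,n)P_nz_n+\sum_{\tau=n}^{m-1}\ma(m,\tau+1)P_{\tau+1}g_\tau$ for $n\le m$, and the unstable component satisfies the analogous backward relation. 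Because the dichotomy holds on all of $\Z$ and $g_\tau$ carries the extra decay $\min\{\mu_{|\tau+1|}^{-\ve},\nu_{|\tau+1|}^{-\ve}\}$, which exactly cancels the nonuniform factors $\mu_{|\tau+1|}^{\ve}$, $\nu_{|\tau+1|}^{\ve}$ in \eqref{deeqbl}, the two series in \eqref{eqlpz} converge absolutely, and the boundary contributions $\ma(m,n)P_nz_n$ (as $n\to-\iy$) and $\ma(m,n)Q_nz_n$ (as $n\to+\iy$) drop out, leaving \eqref{eqlpz}. Equivalently, one checks that the right-hand side of \eqref{eqlpz} defines a self-map $J$ of the Banach space of bounded sequences which is a contraction (Lipschitz constant $K\hat\ga(\og_1\zeta_a+\og_2\zeta_{-b})<1$ by (c$_2$)--(c$_3$) and \eqref{eqxwqa}--\eqref{eqxwqb}) and whose unique fixed point is simultaneously $z\equiv 0$ and the given bounded solution.

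Once \eqref{eqlpz} is in hand the conclusion is immediate. Applying $\|\ma(m,\tau+1)P_{\tau+1}\|\le K(h_m/h_{\tau+1})^a\mu_{|\tau+1|}^{\ve}$ for $\tau\le m-1$ and $\|\ma(m,\tau+1)Q_{\tau+1}\|\le K(k_{\tau+1}/k_m)^{-b}\nu_{|\tau+1|}^{\ve}$ for $\tau\ge m$ from \eqref{deeqbl}, then the bound on $\|g_\tau\|$ (the $\mu^{\ve}$, $\nu^{\ve}$ factors cancelling), and finally the summation bounds \eqref{eqxwqa}--\eqref{eqxwqb}, applicable with the exponents $a<-1$ and $-b<-1$ (guaranteed by $|a|,b>1$) and the rates $h\in\Delta_1$, $k\in\Delta_2$, one gets
\[
\|z_m\|\le K\hat\ga\|z\|\Big(\sum_{\tau=-\iy}^{m-1}(h_m/h_{\tau+1})^a+\sum_{\tau=m}^{\iy}(k_{\tau+1}/k_m)^{-b}\Big)\le K\hat\ga(\og_1\zeta_a+\og_2\zeta_{-b})\,\|z\|
\]
for every $m\in\Z$. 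Taking the supremum over $m$ yields $\|z\|\le K\hat\ga(\og_1\zeta_a+\og_2\zeta_{-b})\|z\|$, and (c$_3$) forces $\|z\|=0$, i.e.\ $z_m\equiv 0$.

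I expect the only real obstacle to be the first step --- justifying \eqref{eqlpz}, i.e.\ that the homogeneous boundary terms at $\pm\iy$ vanish. This is precisely where the two-sidedness of the $(h,k,\mu,\nu)$-dichotomy on $\Z$ is indispensable (a one-sided dichotomy would generally leave a nonzero bounded solution of the homogeneous equation) and where hypothesis (c$_2$) is needed to absorb the nonuniformity factors. Everything afterwards is a routine contraction estimate, structurally identical to the uniqueness computation already carried out for \eqref{eqzwaa} in Lemma \ref{lezbj} and to the Lyapunov--Perron estimates of Section \ref{robustnesssection}.
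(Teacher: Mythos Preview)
Your proposal is correct and follows essentially the same route as the paper: assert the Lyapunov--Perron representation \eqref{eqlpz} for an arbitrary bounded solution, bound the two series by $K\hat\gamma(\og_1\zeta_a+\og_2\zeta_{-b})\|z\|$ using \eqref{deeqbl}, (c$_2$), and \eqref{eqxwqa}--\eqref{eqxwqb}, then invoke (c$_3$). The paper's own proof is terser---it simply states the representation and the resulting inequality---whereas you add discussion of why the boundary terms $\ma(m,n)P_nz_n$ and $\ma(m,n)Q_nz_n$ should vanish; this extra commentary is not in the paper, which, like Lemma \ref{lezbj}, takes the representation for granted once the dichotomy on $\Z$ is assumed.
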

\begin{proof}
It is obvious that $z_m\equiv0$ is a bounded solution of
\eqref{eqswk}. Next we show that $z_m\equiv0$ is unique. Assume
that $(z^0_m)_{m\in\Z}$ is any bounded solution of \eqref{eqswk},
then $(z^0_m)_{m\in\Z}$ reads
\begin{align*}
z^0_m&=\sum\limits_{\tau=-\iy}^{m-1}
\ma(m,\tau+1)P_{\tau+1}[f_\tau(x_\tau+z_\tau)-f_\tau(x_\tau)]\\&\quad-\sum\limits_{\tau=m}^\iy
\ma(m,\tau+1)Q_{\tau+1}[f_\tau(x_\tau+z_\tau)-f_\tau(x_\tau)]
\end{align*}
and then $ \|z^0-0\|\leq
K\hat{\ga}(\og_1\zeta_a+\og_2\zeta_{-b})\|z^0-0\|. $ Therefore,
$z^0_m\equiv0$.
\end{proof}

Define the operators
\begin{equation}\label{eqwly}
H_m(x)=x+h_m(x),~~L_m(y)=y+l_m(y),~~x,y\in X,~m\in\Z.
\end{equation}
\begin{lemma}\label{leyjj}
The following claims hold:
\begin{itemize}
\item[\tu{(e$_1$)}] for any fixed
$(\bar{m},x_{\bar{m}})\in\Z\times X$,
$H_m(X_m(\bar{m},x_{\bar{m}}))$ is a solution of \eqref{eqlinear};

\item[\tu{(e$_2$)}] for any fixed $(\bar{m},
y_{\bar{m})}\in\Z\times X$, $L_m(Y_m(\bar{m},y_{\bar{m}}))$ is a
solution of \eqref{eqnonlinear};

\item[\tu{(e$_3$)}] for any fixed $m\in\Z$ and $y\in X$,
$H_m(L_m(y))=y$ holds;

\item[\tu{(e$_4$)}] for any fixed $m\in\Z$ and  $x\in X$,
$L_m(H_m(x))=x$ holds.
\end{itemize}

\end{lemma}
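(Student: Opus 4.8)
The plan rests on two ingredients: a cocycle (orbit-consistency) property for the auxiliary sequences produced in Lemma~\ref{lezbj}, and the uniqueness of bounded solutions of the difference equations involved. Recall that in \eqref{eqwly} one has $H_m(x)=x+h_m(m,x)$ and $L_m(y)=y+l_m(m,y)$, the second argument of the two-parameter objects of Lemma~\ref{lezbj} being specialised to the base time. Since \eqref{eqlinear} and \eqref{eqnonlinear} are uniquely solvable on all of $\Z$ through a prescribed value, $X_n(p,X_p(\bar m,\xi))=X_n(\bar m,\xi)$ and $Y_n(p,Y_p(\bar m,\xi))=Y_n(\bar m,\xi)$ for all $p,n\in\Z$. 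Because the inhomogeneity $f_\tau(X_\tau(\bar m,\xi))$ in \eqref{eqzmzaa} and the map $z\mapsto f_\tau(Y_\tau(\bar m,\xi)+z)$ in \eqref{eqzwaa} depend on $(\bar m,\xi)$ only through the relevant orbit, and because each of those equations has a unique bounded solution by Lemma~\ref{lezbj}, I obtain the key identities $h_n(\bar m,\xi)=h_n(n,X_n(\bar m,\xi))$ and $l_n(\bar m,\xi)=l_n(n,Y_n(\bar m,\xi))$ for all $n\in\Z$.

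For (e$_1$): fix $(\bar m,x_{\bar m})$, write $X_n=X_n(\bar m,x_{\bar m})$ and $h_n=h_n(\bar m,x_{\bar m})$. By the identity above $H_n(X_n)=X_n+h_n$, and adding $X_{n+1}=A_nX_n+f_n(X_n)$ to \eqref{eqzmzaa} the perturbation terms cancel, so $H_{n+1}(X_{n+1})=A_nH_n(X_n)$; thus $(H_n(X_n))_{n\in\Z}$ solves \eqref{eqlinear}. For (e$_2$): fix $(\bar m,y_{\bar m})$, write $Y_n=Y_n(\bar m,y_{\bar m})$, $l_n=l_n(\bar m,y_{\bar m})$; the identity gives $L_n(Y_n)=Y_n+l_n$, hence $f_n(Y_n+l_n)=f_n(L_n(Y_n))$, and adding $Y_{n+1}=A_nY_n$ to \eqref{eqzwaa} yields $L_{n+1}(Y_{n+1})=A_nL_n(Y_n)+f_n(L_n(Y_n))$, so $(L_n(Y_n))_{n\in\Z}$ solves \eqref{eqnonlinear}.

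For (e$_3$): fix $m$ and $y$, put $Y_n=Y_n(m,y)$. By (e$_2$), $v_n:=L_n(Y_n)$ solves \eqref{eqnonlinear} with $v_m=L_m(y)$, so $v_n=X_n(m,L_m(y))$; by (e$_1$) the sequence $H_n(v_n)$ solves \eqref{eqlinear}, and since $Y_n$ also solves \eqref{eqlinear}, $w_n:=H_n(v_n)-Y_n$ solves the homogeneous equation $w_{n+1}=A_nw_n$. As $w_n=h_n(n,v_n)+l_n(n,Y_n)$, the uniform bounds of Lemma~\ref{lezbj} show $w$ is bounded on $\Z$; under the nonuniform $(h,k,\mu,\nu)$-dichotomy the only bounded solution of the homogeneous equation is $0$ (the uniqueness already used in Lemma~\ref{lezbj}), so $w\equiv0$, and at $n=m$ this reads $H_m(L_m(y))=y$. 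For (e$_4$): fix $m$ and $x$, put $X_n=X_n(m,x)$. By (e$_1$), $u_n:=H_n(X_n)$ solves \eqref{eqlinear} with $u_m=H_m(x)$, so $u_n=Y_n(m,H_m(x))$; by (e$_2$), $L_n(u_n)$ solves \eqref{eqnonlinear}. Setting $w_n:=L_n(u_n)-X_n$ and subtracting the two copies of \eqref{eqnonlinear} gives $w_{n+1}=A_nw_n+f_n(X_n+w_n)-f_n(X_n)$, i.e. $w$ solves \eqref{eqswk} with the solution $X_\cdot$; since $w_n=l_n(n,u_n)+h_n(n,X_n)$ is bounded, Lemma~\ref{lesss} forces $w\equiv0$, whence $L_m(H_m(x))=X_m=x$.

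The main obstacle is establishing the orbit-consistency identities $h_n(\bar m,\xi)=h_n(n,X_n(\bar m,\xi))$ and $l_n(\bar m,\xi)=l_n(n,Y_n(\bar m,\xi))$ (which is where unique solvability of \eqref{eqlinear}--\eqref{eqnonlinear} on $\Z$ and the uniqueness of bounded solutions from Lemma~\ref{lezbj} must be combined carefully), and then keeping track of which uniqueness statement is used where -- the homogeneous linear equation for (e$_3$), and the variational equation \eqref{eqswk} together with Lemma~\ref{lesss} for (e$_4$). Once these are in hand, (e$_1$)--(e$_2$) are one-line cancellations in \eqref{eqzmzaa}--\eqref{eqzwaa}, and (e$_3$)--(e$_4$) follow from the elementary principle that a bounded solution of an equation whose only bounded solution is $0$ must vanish.
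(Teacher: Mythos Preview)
Your proof is correct and follows essentially the same approach as the paper: first establish the orbit-consistency identities $h_m(X_m(\bar m,x_{\bar m}))=h_m(\bar m,x_{\bar m})$ and $l_m(Y_m(\bar m,y_{\bar m}))=l_m(\bar m,y_{\bar m})$ from the uniqueness in Lemma~\ref{lezbj}, derive (e$_1$)--(e$_2$) by direct cancellation in \eqref{eqzmzaa}--\eqref{eqzwaa}, and then obtain (e$_3$) from uniqueness of bounded solutions of the homogeneous linear equation and (e$_4$) from Lemma~\ref{lesss}. Your exposition is in places more explicit about the cocycle argument and where each uniqueness statement is invoked, but the logical structure coincides with the paper's proof.
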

\begin{proof}
From (d$_1$) and (d$_2$) of Lemma \ref{lezbj}, it follows that
$$
h_m(X_m(\bar{m},x_{\bar{m}}))=h_m(\bar{m},x_{\bar{m}}),~~
l_m(Y_m(\bar{m},y_{\bar{m}}))=l_m(\bar{m},y_{\bar{m}}).
$$
Then
\begin{align*}
H_m(X_m(\bar{m},x_{\bar{m}}))&=X_m(\bar{m},x_{\bar{m}})+h_m(X_m(\bar{m},x_{\bar{m}}))\\&=X_m(\bar{m},x_{\bar{m}})+h_m(\bar{m},x_{\bar{m}}),\\
L_m(Y_m(\bar{m},y_{\bar{m}}))&=Y_m(\bar{m},y_{\bar{m}})+l_m(Y_m(\bar{m},y_{\bar{m}}))=Y_m(\bar{m},y_{\bar{m}})+l_m(\bar{m},y_{\bar{m}}).
\end{align*}
From the fact that $(X_m(\bar{m},x_{\bar{m}}))_{m\in\Z}$,
$(h_m(\bar{m},x_{\bar{m}}))_{m\in\Z}$,
$(Y_m(\bar{m},y_{\bar{m}}))_{m\in\Z}$, and
$l_m(\bar{m},y_{\bar{m}}))_{m\in\Z}$ are solutions of
\eqref{eqnonlinear}, \eqref{eqzmzaa}, \eqref{eqlinear}, and
\eqref{eqzwaa},  respectively, it follows that
\begin{align*}
H_{m+1}(X_m(\bar{m},x_{\bar{m}}))&=X_{m+1}(\bar{m},x_{\bar{m}})+h_{m+1}(\bar{m},x_{\bar{m}})\\
&=A_mX_m(\bar{m},x_{\bar{m}})+f_m(X_m(\bar{m},x_{\bar{m}}))\\&\quad+A_mh_m(\bar{m},x_{\bar{m}})-f_m(X_m(\bar{m},x_{\bar{m}}))\\
&=A_mH_m(X_m(\bar{m},x_{\bar{m}}))
\end{align*}
and
\begin{align*}
L_{m+1}(Y_m(\bar{m},y_{\bar{m}}))&=Y_{m+1}(\bar{m},y_{\bar{m}})+l_{m+1}(\bar{m},y_{\bar{m}})\\
&=A_mY_m(\bar{m},y_{\bar{m}})+A_ml_m(\bar{m},y_{\bar{m}})\\&\quad+
f_m(Y_m(\bar{m},y_{\bar{m}})+l_m(\bar{m},y_{\bar{m}}))\\
&=A_mL_m(Y_m(\bar{m},y_{\bar{m}}))+f_m(L_m(Y_m(\bar{m},y_{\bar{m}}))).
\end{align*}
Hence, (e$_1$) and (e$_2$) hold.

Let $(y_m)_{m\in\Z}$ be any solution of \eqref{eqlinear} and
$(x_m)_{m\in\Z}$ be any solution of \eqref{eqnonlinear}. It
follows from (e$_1$) and (e$_2$) that $(L_m(y_m))_{m\in\Z}$ and
$(L_m(H_m(x_m)))_{m\in\Z}$ are solutions of \eqref{eqnonlinear},
$(H_m(L_m(y_m)))_{m\in\Z}$ and $(H_m(x_m))_{m\in\Z}$ are solutions
of \eqref{eqlinear}. Then
\begin{align*}
H_{m+1}(L_m(y_m))-y_{m+1}&=A_mH_m(L_m(y_m))-A_my_m\\&=A_m(H_m(L_m(y_m))-y_m)
\end{align*}
and
\begin{align*}
L_{m+1}(H_m(x_m))-x_{m+1}&=A_mL_m(H_m(x_m))+f_m(L_m(H_m(x_m)))\\&\quad-A_mx_m-f_m(x_m)\\
&=A_m(L_m(H_m(x_m))-x_m)\\&\quad+f_m(L_m(H_m(x_m))-x_m+x_m)-f_m(x_m).
\end{align*}
Moveover,
\begin{align*}
\|H_{m}(L_m(y_m))-y_{m}\|&\leq\|H_m(L_m(y_m))-L_m(y_m)\|+\|L_m(y_m)-y_m\|\\&\leq2K\hat{\al}(\og_1\zeta_a+\og_2\zeta_{-b})
\end{align*}
and
\begin{align*}
\|L_m(H_m(x_m))-x_m\|&\leq\|L_m(H_m(x_m))-H_m(x_m)\|+\|H_m(x_m)-x_m\|\\&\leq2K\hat{\al}(\og_1\zeta_a+\og_2\zeta_{-b}).
\end{align*}
Therefore, $(H_m(L_m(y_m))-y_m)_{m\in\Z}$ is a bounded solution of
\eqref{eqlinear} and $H_m(L_m(y_m))-y_m\equiv0$. For any fixed
$m\in\Z$ and $y\in X$, there exists a solution of \eqref{eqlinear}
with the initial value $y_m=y$. Then $H_m(L_m(y))=y$ for any
$m\in\Z$. On the other hand, by Lemma \ref{lesss}, we conclude
that $L_m(H_m(x_m))-x_m\equiv0$ for any $m\in\Z$. For any fixed
$m\in\Z$ and $x\in X$, there exists a solution of
\eqref{eqnonlinear} with the initial value $x_m=x$. Then
$L_m(H_m(x))=x$ holds for any $m\in\Z$.
\end{proof}

In order to establish Theorem \ref{ghtheorem}, we only need to
verify that $(H_m)_{m\in\Z}$ are topologically equivalent
operators. In fact,
\begin{itemize}
\item Condition (i): it follows from \eqref{eqwly} and (d$_1$) of
Lemma \ref{lezbj} that
$$\|H_m(x)-x\|=\|h_m(x)\|\leq
K\hat{\al}(\og_1\zeta_a+\og_2\zeta_{-b}),~ m\in\Z,~x\in X.$$ Then
$\|H_m(x)\|\ra\iy$ uniformly with respect to $m\in\Z$ as
$\|x\|\ra\iy$;

\item Condition (ii): by (e$_3$) and (e$_4$) of Lemma \ref{leyjj},
for each fixed $m\in\Z$, $H_m=L^{-1}_m$ is homeomorphism;

\item Condition (iii): by \eqref{eqwly}, for any $m\in\Z$,
$$\|L_m(y)-y\|=\|l_m(y)\|\leq
K\hat{\al}(\og_1\zeta_a+\og_2\zeta_{-b}),~m\in\Z,~y\in X.$$ This
implies that $\|L_m(y)\|\ra\iy$ uniformly with respect to $m\in\Z$
as $\|y\|\ra\iy$;

\item Condition (iv): it follows from Lemma \ref{leyjj} that the
condition (iv) holds.
\end{itemize}

\section{Nonlinear perturbations: parameter
dependence of stable Lipschitz invariant manifolds} It has been
widely recognized that, both in mathematics and in application,
the classical theory of invariant manifolds provides the geometric
structures for describing and understanding the qualitative
behavior of nonlinear dynamical systems. In this section, we
establish the existence of  parameter dependence of stable
Lipschitz invariant manifolds for sufficiently small nonlinear
perturbations of \eqref{eqlinear} with the nonuniform
$(h,k,\mu,\nu)$-dichotomy. Since here we only consider the case of
stable invariant manifold, then we only need to carry out the
discussion on $\Z^+$.

Consider the nonlinear perturbed system with the parameters of
\eqref{eqlinear}
\begin{equation}\label{eqnonlinearaaa}
x_{m+1}=A_mx_m+f_m(x_m,\la),
\end{equation}
where $f_m:X\times Y\ra X$ and $f_m(0,\la)=0$ for any $m\in\Z^+$
and $\la\in Y$. In order to establish the existence of stable
invariant manifolds and for convenience of the discussion, we
rewrite the nonuniform $(h,k,\mu,\nu)$-dichotomy in the following
equivalent form
\begin{equation}\label{deeqaaabbb}
\begin{split}
\|\ma(m,n)P_n\|& \le K\left(h_m/h_n\right)^a\mu_n^\ve,\\
\|\ma(m,n)^{-1}Q_m\|& \le
K\left(k_m/k_n\right)^{-b}\nu_m^\ve\end{split}
\end{equation}
for $m\ge n$ and define the stable and unstable spaces by $E_m
=P_m (X), F_m=Q_m (X), m\in\Z^+$, respectively.

Let
\begin{equation}\label{*beta}
\bt_m=k_m^{b/(\ve q)}h_m^{-a(q+1)/(\ve q)}\mu_m^{1+1/q}C_m^{1/(\ve
q)},
\end{equation}
where
$$
C_m=\sum\limits_{\tau=m}^\iy
h_\tau^{aq}(h_\tau/h_{\tau+1})^a\max\{\mu_{\tau+1}^\ve,\nu_{\tau+1}^\ve\},
$$
and $B_n(\varrho)\subset E_n$ be the open ball centered at zero
with radius $\varrho$ for a given $n\in\Z^+$.

Denote by $\Xa$ the space of sequences of operators $\Phi_n\colon
Z_\bt=Z_\bt(1)\to X$ satisfying
\[
\Phi_n(0)=0, \quad \Phi_n(B_n(\bt_n^{-\ve}))\subset F_n,
\]
and
\begin{equation}\label{*cuca}
\|\Phi_n(\xi_1)-\Phi_n(\xi_2)\|\le\|\xi_1-\xi_2\|
\end{equation}
for any $n\in\Z^+$ and $\xi_1,\xi_2\in B_n(\bt_n^{-\ve})$,  where
\[
Z_\bt(\eta)=\big\{(n,\xi):n\in\Z^+,~ \xi\in
B_n(\bt_n^{-\ve}/\eta)\big\}
\]
and  $\eta$ is a  positive constant. It is not difficult to show
that $\Xa$ is a Banach space with the norm
\[
|\Phi|'=\sup\left\{\frac{\|\Phi_n(\xi)\|}{\|\xi\|}: n\in\Z^+
~\mbox{and}~ \xi\in B_n(\bt_n^{-\ve})\setminus \{0\}\right\}.
\]
On the other hand, let $\Xa^*$ be the space of sequences of
operators $\Phi_n\colon\Z^+\times X\to X$ such that
$\Phi|_{Z_\bt}\in \Xa$ and $
\Phi_n(\xi)=\Phi_n\big(\bt_n^{-\ve}\xi/\|\xi\|\big), (n,\xi) \not
\in Z_\bt.
 $
It is clear that there is a one-to-one correspondence between
$\Xa$ and $\Xa^*$ and $\Xa^*$ is a Banach space with the norm
$\Xa^*\ni\Phi\mapsto|\Phi|Z_\bt|'$. For $n\in\Z^+$ and
$\xi_1,\xi_2\in E_n$, one has
\begin{equation}\label{eqzmz}
\|\Phi_n(\xi_1)-\Phi_n(\xi_2)\|\le2\|\xi_1-\xi_2\|.
\end{equation}
For $\la\in Y$ and $(n,u_n,v_n)\in\Z^+\times E_n\times F_n$,
consider the graph
\begin{equation}\label{*ru}
\W_\la=\big\{(n,\xi,\Phi_n(\xi)): (n,\xi)\in Z_\bt,\Phi \in
\Xa\big\}
\end{equation}
and
\begin{equation}\label{*psi}
\Psi_\kappa^\la(n,u_n, v_n)=(m, u_m, v_m), \quad \kappa=m-n\geq 0,
\end{equation}
where
\begin{equation}\label{eqbl}
\begin{split}
u_m=\ma(m,n)u_n+
\sum\limits_{\tau=n}^{m-1}\ma(m,\tau+1)P_{\tau+1}f_\tau(u_\tau,v_\tau,\la),
\end{split}
\end{equation}
\begin{equation}\label{eqb2}
v_m=\ma(m,n)v_n+\sum\limits_{\tau=n}^{m-1}\ma(m,\tau+1)Q_{\tau+1}f_\tau(u_\tau,v_\tau,\la).
\end{equation}

We now establish the existence of a stable Lipschitz invariant
manifold for \eqref{eqnonlinearaaa}.

\begin{theorem}\label{theoremlsm}
Assume that

\begin{itemize}
\item[\tu{(g$_1$)}] there exist positive constants $\hc$ and $q$
such that
\begin{equation}\label{*A1}
\|f_m(x^1,\la)-f_m(x^2,\la)\|\le
\hat{c}\|x^1-x^2\|(\|x^1\|^{q}+\|x^2\|^{q})
\end{equation}
and
\begin{equation}\label{*A2}
\|f_m(x,\la_1)-f_m(x,\la_2)\|\le
\hat{c}|\la_1-\la_2|\cdot\|x\|^{q+1}
\end{equation}
for any $m\in\Z^+$, $x,x^1, x^2\in X$ and  $\la,\la_1, \la_2\in
Y$;

\item[\tu{(g$_2$)}] the sequence of linear operators
$(A_m)_{m\in\Z^+}$ admits a nonuniform $(h,k,\mu,\nu)$-dichotomy;

\item[\tu{(g$_3$})]
$\lim\limits_{m\ra\iy}k_m^{-b}h_m^a\nu_m^\ve=0$ and
$h_m^a\bt_m^\ve$ is a decreasing sequence.
\end{itemize}
If  $\hat{c}$ in \eqref{*A1} and \eqref{*A2} is sufficiently
small, then, for each $\la\in Y$, for any $(n,\xi)$, $(n,\xi_1)$,
$(n,\xi_2)$ $\in Z_{\bt\cdot\mu}(2K)$ and $\kappa=m-n \ge 0$,
there exist a unique sequence of operators $\Phi_n=\Phi_n^\la\in
\Xa$ and a constant $d>0$ such that
\begin{equation}\label{eqkkk}
\Psi_\kappa^\la(n,\xi,\Phi_n(\xi))\in \W_\la
\end{equation}
and
\begin{equation}\label{*sec}
\|\Psi_\kappa^\la(n,\xi_1,\Phi_n(\xi_1))-\Psi_\kappa^\la(n,\xi_2,\Phi_n(\xi_2))\|
\le d(h_m/h_n)^a\mu_n^\ve\|\xi_1-\xi_2\|.
\end{equation}
Moreover, there exists a constant $d^*>0$ such that
\begin{equation}\label{*secb}
\|\Psi_\kappa^{\la_1}(n,\xi,\Phi_n^{\la_1}(\xi))-\Psi_\kappa^{\la_2}(n,\xi,\Phi_n^{\la_2}(\xi))\|
\le d(h_m/h_n)^a\mu_n^\ve|\la_1-\la_2|\cdot\|\xi\|.
\end{equation}
for any $\la_1,\la_2\in Y$.
\end{theorem}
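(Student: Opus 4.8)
The plan is to build $\Phi^\la$ by a Lyapunov--Perron argument for the forward orbit combined with a graph-transform fixed point on the Lipschitz graphs in $\Xa$, and then to read off \eqref{*sec} and \eqref{*secb} from the a priori bounds produced along the way. Throughout, $\hat c$ in \eqref{*A1}--\eqref{*A2} is taken small enough that every contraction constant below is $<1$; note that \eqref{*A1} makes $f_m$ Lipschitz on $B_m(\bt_m^{-\ve})$ with constant a multiple of $\hat c\,\bt_m^{-\ve q}$, and both \eqref{*A1} and \eqref{*A2} vanish at $x=0$, consistently with $f_m(0,\la)=0$.

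First fix $\la\in Y$, $\Phi\in\Xa$ and $(n,\xi)\in Z_{\bt\cdot\mu}(2K)$, and look for the orbit of \eqref{eqnonlinearaaa} from $(n,\xi)$ that lies on the graph of $\Phi$. For a solution $(u_m,v_m)_{m\ge n}$ of \eqref{eqbl}--\eqref{eqb2} with $v_m=\Phi_m(u_m)$, demanding that $v_m$ grow more slowly than $(k_m/k_n)^b\nu_m^{-\ve}$ together with $\|\ma(m,n)^{-1}Q_m\|\le K(k_m/k_n)^{-b}\nu_m^\ve$ from \eqref{deeqaaabbb} forces
$$v_m=-\sum_{\tau=m}^{\iy}\ma(m,\tau+1)Q_{\tau+1}f_\tau(u_\tau,\Phi_\tau(u_\tau),\la),$$
so $u=(u_m)_{m\ge n}$ must be the fixed point of
$$(\mathcal{L}u)_m=\ma(m,n)\xi+\sum_{\tau=n}^{m-1}\ma(m,\tau+1)P_{\tau+1}f_\tau(u_\tau,\Phi_\tau(u_\tau),\la)$$
on the Banach space of sequences $(u_m)_{m\ge n}\subset E_m$ with norm $\sup_{m\ge n}\|u_m\|(h_m/h_n)^{-a}\mu_n^{-\ve}$, restricted to those with values in the balls $B_m(\bt_m^{-\ve})$. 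Using \eqref{*A1}, $\|\Phi_\tau(u_\tau)\|\le\|u_\tau\|$ and $\|\ma(m,\tau+1)P_{\tau+1}\|\le K(h_m/h_{\tau+1})^a\mu_{\tau+1}^\ve$, the map $\mathcal{L}$ is a contraction and its fixed point $u=u(n,\xi,\Phi,\la)$ obeys $\|u_m\|\le 2K(h_m/h_n)^a\mu_n^\ve\|\xi\|$; the monotonicity of $h_m^a\bt_m^\ve$ in (g$_3$) then guarantees $u_m\in B_m(\bt_m^{-\ve})$ for all $m\ge n$ — this is precisely what pins down the admissible set of initial conditions $Z_{\bt\cdot\mu}(2K)$. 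By the same inhomogeneous-contraction mechanism one records $\|u_m(\xi_1)-u_m(\xi_2)\|\le d_1(h_m/h_n)^a\mu_n^\ve\|\xi_1-\xi_2\|$ and analogous estimates for the dependence of $u$ on $\Phi$ (measured in $|\cdot|'$) and on $\la$ (via \eqref{*A2}).

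Next define the graph transform $(\mathcal{T}^\la\Phi)_n(\xi)=-\sum_{\tau=n}^{\iy}\ma(n,\tau+1)Q_{\tau+1}f_\tau(u_\tau,\Phi_\tau(u_\tau),\la)$ with $u=u(n,\xi,\Phi,\la)$. Invariance of the projections gives $(\mathcal{T}^\la\Phi)_n(\xi)\in F_n$, and $f_\tau(0,\la)=0$ gives $(\mathcal{T}^\la\Phi)_n(0)=0$. Estimating with $\|\ma(m,\tau+1)Q_{\tau+1}\|\le K(k_{\tau+1}/k_m)^{-b}\nu_{\tau+1}^\ve$, \eqref{*A1}, and the a priori bound on $u$, the weighted sum collapses: the precise form of $\bt_m$ in \eqref{*beta} — in particular the factor $C_m^{1/(\ve q)}$ — reduces the Lipschitz constant of $(\mathcal{T}^\la\Phi)_n$ to a constant multiple of $\hat c\,\sup_{\tau\ge n}\big(k_{\tau+1}^{-b}h_{\tau+1}^a\big)$, which is finite (indeed small) by (g$_3$). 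Hence, after the radial extension defining $\Xa^*$ (which only costs a harmless factor $2$, cf. \eqref{eqzmz}), $\mathcal{T}^\la$ maps $\Xa$ into itself; feeding in the $\Phi$-Lipschitz bound for $u$ from the previous step, the same computation shows $\mathcal{T}^\la$ is a contraction on $(\Xa,|\cdot|')$. Its unique fixed point $\Phi^\la$ is the desired graph: running the orbit from $(\xi,\Phi^\la_n(\xi))$ one verifies $v_m=\Phi^\la_m(u_m)$ for all $m\ge n$, which is \eqref{eqkkk}, and uniqueness in $\Xa$ is uniqueness of the fixed point.

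Finally, \eqref{*sec} follows from $\|u_m(\xi_1)-u_m(\xi_2)\|\le d_1(h_m/h_n)^a\mu_n^\ve\|\xi_1-\xi_2\|$ together with $\|\Phi^\la_m(u_m(\xi_1))-\Phi^\la_m(u_m(\xi_2))\|\le\|u_m(\xi_1)-u_m(\xi_2)\|$ (using \eqref{eqzmz} off the small ball). For \eqref{*secb} I would first show $\la\mapsto\Phi^\la$ is Lipschitz: since $\Phi^{\la_i}=\mathcal{T}^{\la_i}\Phi^{\la_i}$ and the $\mathcal{T}^\la$ share a uniform contraction constant $\theta<1$, $|\Phi^{\la_1}-\Phi^{\la_2}|'\le(1-\theta)^{-1}|\mathcal{T}^{\la_1}\Phi^{\la_2}-\mathcal{T}^{\la_2}\Phi^{\la_2}|'$, and the right-hand side is a constant multiple of $\hat c\,|\la_1-\la_2|$ by \eqref{*A2} and the $\la$-dependence estimate for $u$. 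Then, writing $u^{\la_i}=u(n,\xi,\Phi^{\la_i},\la_i)$ and splitting $u^{\la_1}-u^{\la_2}$ into the part from changing $\la$ in $f$ and the part from changing $\Phi^\la$ — both controlled by the estimates of the first step — one gets $\|u_m^{\la_1}-u_m^{\la_2}\|\le d(h_m/h_n)^a\mu_n^\ve|\la_1-\la_2|\,\|\xi\|$, and the unstable components are handled as above, giving \eqref{*secb}. The main obstacle is the bookkeeping in the first step: because the forward orbit depends on both $\Phi$ and $\la$, the contraction of $\mathcal{T}^\la$ and the parameter-Lipschitz bound both rest on these orbit-perturbation estimates, and arranging the weights $h_m,\mu_m,\bt_m$ so that the orbit never leaves $B_m(\bt_m^{-\ve})$ while the tail sums converge is delicate — this is exactly what is encoded in the definition \eqref{*beta} of $\bt_m$ and in hypothesis (g$_3$).
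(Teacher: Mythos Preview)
Your proposal is correct and follows essentially the same Lyapunov--Perron strategy as the paper: first a contraction on weighted forward orbits to produce $u=u_\xi^{\Phi,\la}$ with the bound $\|u_m\|\le 2K(h_m/h_n)^a\mu_n^\ve\|\xi\|$ and its Lipschitz dependence on $\xi,\Phi,\la$, then a contraction on $\Xa^*$ for the graph transform (the paper's $J^\la$) to obtain $\Phi^\la$, and finally the orbit estimates give \eqref{*sec} and \eqref{*secb}. One small point: the paper constructs $u$ for all $(n,\xi)\in Z_\bt$ (needed so that $J^\la$ is defined on all of $\Xa^*$) and only afterwards restricts to $Z_{\bt\cdot\mu}(2K)$ to force $(m,u_m)\in Z_\bt$; your sketch starts on the smaller set, so when you set up $\mathcal{T}^\la$ make sure $u$ is available on the full $Z_\bt$ via the $\Xa^*$ extension.
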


\begin{proof}
We first prove that, for each $(n,\xi,\Phi,\la)\in
Z_\bt\times\Xa^*\times Y$, there exists a unique sequence of
operators $u=u_\xi^{\Phi,\la}:\Z^+\to X$ with $u_n=\xi$ such that
\eqref{eqbl} holds for any $m\geq n$ and
\begin{equation}\label{eqcxy}
\|u_m\|\le 2 K (h_m/h_n)^a\mu_n^\ve\|\xi\|.
\end{equation}
Let
$$
\Omega_3:=\{u \colon [n,\iy)\to X|\|u\|_*\le \bt_n^{-\ve},u_m\in
E_m, u_n=\xi, m \ge n\},
$$
where
\begin{equation}\label{eq:norma}
\|u\|_*=\frac{1}{2K}\sup\left\{\frac{\|u_m\|}{(h_m/h_n)^a\mu_n^\ve}:
m\geq n\right\}.
\end{equation}
Then $\Og_3$ is a Banach space with the norm $\lVert
\cdot\rVert_*$. Given $(n,\xi)\in Z_\bt$ and $\Phi\in\Xa^*$, for
each $\la\in Y$, define an operator $L^\la$ on $\Og_3$ by
\begin{align*}
L^\la u_m&=\ma(m,n)\xi+\sum\limits_{\tau=n}^{m-1}
\ma(m,\tau+1)P_{\tau+1}f_\tau(u_\tau, \Phi_\tau(u_\tau),\la).
\end{align*}
Obviously, $L^\la u_n=\xi$ and $L^\la u_m\in E_m$ for $m \ge n$.
By \eqref{*A1} and \eqref{deeqaaabbb}, one has
\begin{align*}
B^{\la,1}_\tau:&=\|f_\tau(u_\tau,\Phi_\tau(u_\tau),\la)\|\\
&\le \hc\left(\|u_\tau\|+\|\Phi_\tau(u_\tau)\|\right)
\left(\|u_\tau\|+\|\Phi_\tau(u_\tau)\|\right)^{q}\\
&\le 3^{q+1}\hc\|u_\tau\|^{q+1}\\&\le 6^{q+1}\hc
K^{q+1}\left(\f{h_\tau}{h_n}\right)^{
a(q+1)}\mu_n^{\ve(q+1)}(\|u\|_*)^{q+1},~\tau\geq n
\end{align*}
and
\begin{align*}
\|L^\la u_m\|&\le\|\ma(m,n)\|\|\xi\|+\sum\limits_{\tau=n}^{m-1} \|\ma(m,\tau+1)P_{\tau+1}\| B^{\la,1}_\tau\\
&\le  K\left(\f{h_m}{h_n}\right)^a\mu_n^\ve\|\xi\| +6^{q+1}\hc
K^{q+2}\lf\f{h_m}{h_n}\rf^ah_n^{-aq}\mu_n^{\ve(q+1)}(\|u\|_*)^{q+1}C_n.
\end{align*}
Then
\begin{align*}
\|L^\la u\|_*&\leq \f{1}{2}\left(\|\xi\|+6^{q+1}\hat{c}K^{q+1}
h_n^{-aq}\mu_n^{\ve q}(\|u\|_*)^{q+1}C_n\right)\\
&\leq \f{1}{2}\left(1+6^{q+1}\hat{c}K^{q+1} h_n^{-aq}\mu_n^{\ve
q}\bt_n^{-\ve q}C_n
\right)\bt_n^{-\ve}\\
&\leq\f{1}{2}(1+6^{q+1}\hat{c}K^{q+1})\bt_n^{-\ve}.
\end{align*}
Hence, $L^\la(\Og_3)\subset\Og_3$ since $\hat{c}$ is sufficiently
small and one can take a $\hat{c}$ such that
$6^{q+1}\hat{c}K^{q+1}< 1$. Moreover, for any $u^1,u^2 \in\Og_3$,
it follows that
\begin{align*}
B^{\la,2}_\tau:&=\|f_\tau(u^1_\tau,\Phi_\tau(u^1_\tau),\la)
-f_\tau(u^2_\tau,\Phi_\tau(u^2_\tau),\la)\|\\
&\leq
3^{q+1}\hc\|u^1_\tau-u^2_\tau\|(\|\mu^1_\tau\|^q+\|\mu^2_\tau\|^q)\\
&\leq2^{q+2}3^{q+1}\hc
K^{q+1}\lf\f{h_\tau}{h_n}\rf^{a(q+1)}\mu_n^{\ve(q+1)}\bt_n^{-\ve
q}\|u^1-u^2\|_*
\end{align*}
and
\begin{align*}
\|L^\la u^1_m-L^\la u^2_m\|&\leq
\sum\limits_{\tau=n}^{m-1}\|\ma(m,\tau+1)P_{\tau+1}\|
B^{\la,2}_\tau\\&\leq2\cdot6^{q+1}\hat{c}K^{q+2}\|u^1-u^2\|_*
\lf\f{h_m}{h_n}\rf^a\mu_n^\ve.
\end{align*}
Then $ \|L^\la u^1-L^\la
u^2\|_*\leq6^{q+1}\hat{c}K^{q+1}\|u^1-u^2\|_*. $ Since $\hat{c}$
is sufficiently small, take $\hat{c}$ such that
$6^{q+1}\hat{c}K^{q+1}<1$, then $L^\la$ is a contraction in
$\Og_3$ and there exists a unique sequence of operators
$u=u^\la\in\Omega_3$ such that $L^\la u=u$. On the other hand,
since $K/(1-(1/2)6^{q+1}\hat{c}K^{q+1})< 2 K$, it is not difficult
to show that, for any $m\geq n$,
\begin{align*}
\|u\|_*\le\f{1}{2}\|\xi\|+\f{1}{2}6^{q+1}\hat{c}K^{q+1}\|u\|_*,~~
\|u_m\|\le 2 K (h_m/h_n)^a\mu_n^\ve\|\xi\|.
\end{align*}

Next we study the properties of the unique sequence of operators
$u=u_\xi^{\Phi,\la}$.

For each $\la\in Y$ and $\Phi \in \Xa^*$, write
$u^i=u_{\xi_i}^{\Phi,\la}$ for $i=1,2$ and $(n,\xi_1),(n,\xi_2)\in
Z_\bt$. By \eqref{eqzmz} and \eqref{*A1}, one has
\begin{align*}
B^{\la,3}_\tau:&=\|f_\tau(u^1_\tau,
\Phi_\tau(u^1_\tau),\la)-f_\tau(u^2_\tau,\Phi_\tau(u^2_\tau),\la)\|\\&\leq3^{q+1}\hc\|u^1_\tau-u^2_\tau\|(\|u^1_\tau\|^q+\|u^2_\tau\|^q).
\end{align*}
Then
\begin{align*}
\|u^1_m-u^2_m\|&\leq\|\ma(m,n)(\xi_1 -\xi_2)\|+\sum\limits_{\tau=n}^{m-1}\|\ma(m,\tau+1)P_{\tau+1}\|B^{\la,3}_\tau\\
&\leq
K\lf\f{h_m}{h_n}\rf^a\mu_n^\ve(\|\xi_1-\xi_2\|\\&\quad+2\cdot6^{q+1}\hc
K^{q+2}\|u^1-u^2\|_*
\lf\f{h_m}{h_n}\rf^a\mu_n^{\ve(q+1)}\bt_n^{-\ve q}C_n
\end{align*}
and
\[
\|u^1-u^2\|_*\le\f{1}{2}\|\xi_1-\xi_2\|+6^{q+1}\hat{c}K^{q+1}\|u^1-u^2\|_*.
\]
Therefore,
\begin{equation}\label{eqzw}
\|u^1_m-u^2_m\|\le K_1(h_m/h_n)^a\mu_n^\ve\|\xi_1-\xi_2\|
\end{equation}
with $K_1=K/(1-6^{q+1}\hat{c}K^{q+1})$ if $\hat{c}$ is
sufficiently small.

For each $\la\in Y$ and each $(n,\xi) \in Z_\bt$,  write
$u^i=u_\xi^{\Phi^i,\la}$ for $i=1,2$
 and $\Phi^1,\Phi^2 \in \Xa^*$. With the help
of \eqref{deeqaaabbb}, \eqref{eqzmz}, \eqref{*A1}, and
\eqref{eqcxy}, one has
\begin{align*}
B^{\la,4}_\tau:&=\|f_\tau(u^1_\tau,\Phi^1_\tau(u^1_\tau),\la)
-f_\tau(u^2_\tau,\Phi^2_\tau(u^2_\tau),\la)\|\\
&\leq3^{q}\hc\left[3(\|u^1_\tau-u^2_\tau\|)
(\|u^1_\tau\|^{q}+\|u^2_\tau\|^{q})\cdot(\|u^1_\tau\|\cdot
|\Phi^1-\Phi^2|')
(\|u^1_\tau\|^{q}+\|u^2_\tau\|^{q})\right]\\
&\leq[2\cdot6^{q+1}\hc K^{q+1}\|u^1-u^2\|_*+4\cdot6^q\hc
K^{q+1}\|\xi\|\cdot|\Phi^1-\Phi^2|']\\
&\quad\times(h_\tau/h_n)^{a(q+1)}\mu_n^{\ve(q+1)}\bt_n^{-\ve
q},~\tau\geq n
\end{align*}
and
\begin{align*}
\|u^1_m-u^2_m\|&\leq\sum\limits_{\tau=n}^{m-1}\|\ma(m,\tau+1)P_{\tau+1}\|B^{\la,4}_\tau\\
&\leq[2\cdot6^{q+1}\hc K^{q+1}\|u^1-u^2\|_*+4\cdot6^q\hc
K^{q+1}\|\xi\|\cdot|\Phi^1-\Phi^2|']\\&\quad\times
K\lf\f{h_m}{h_n}\rf^ah_n^{-aq}\mu_n^{\ve(q+1)}\bt_n^{-\ve q}C_n.
\end{align*}
Then
\begin{align*}
\|u^1-u^2\|_*\leq[6^{q+1}\hc K^{q+1}\|u^1-u^2\|^*+2\cdot6^q\hc
K^{q+1}\|\xi\|\cdot|\Phi^1-\Phi^2|']\mu_n^{-\ve}
\end{align*}
and
\begin{equation}\label{eqwww}
\|u^1_m-u^2_m\|\le K_2(\mu_m/\mu_n)^a\|\xi\|\cdot |\Phi^1-\Phi^2|'
\end{equation}
with $K_2=4\cdot6^q\hc K^{q+2}/(1-6^{q+1}\hc K^{q+1})$.

In order to establish the existence and uniqueness of the sequence
of operators $\Phi_n=\Phi_n^\la\in \Xa$ satisfying \eqref{eqb2}
for each given $\la\in Y$, we will prove that, if $\hat{c}$ is
sufficiently small and $\Phi_n\in \Xa^*$, then one has the
following claims:
\begin{itemize}
\item[\tu{(h$_1$)}] for $(n,\xi)\in Z_\bt$ and $m\ge n$, if
\begin{equation}\label{eqzwt}
\Phi_m(u_m)=\ma(m,n)\Phi_n(\xi)
+\sum\limits_{\tau=n}^{m-1}\ma(m,\tau+1)Q_{\tau+1}f_\tau(u_\tau,\Phi_\tau(u_\tau),\la),
\end{equation}
then
\begin{equation}\label{eqyjj}
\Phi_n(\xi)=-\sum\limits_{\tau=n}^\iy
\ma(\tau+1,n)^{-1}Q_{\tau+1}f_\tau(u_\tau,\Phi_\tau(u_\tau),\la);
\end{equation}
\item[\tu{(h$_2$)}] if \eqref{eqyjj} holds for $n\in\Z^+$ and
$\xi\in B_n(\bt_n^{-\ve})$, then \eqref{eqzwt} holds for
$(n,\xi)\in Z_{\bt\cdot\mu}(2K)$.
\end{itemize}
It follows from \eqref{deeqaaabbb}, \eqref{*A1},  \eqref{eqzmz}
and \eqref{eqcxy} that
\begin{align*}
B^{\la,5}_\tau:&=\|\ma(\tau+1,n)^{-1}Q_{\tau+1}\|\cdot \|f_\tau(u_\tau,\Phi_\tau(u_\tau),\la)\|\\
&\le 3^{q+1}\hc K\left(\f{k_{\tau+1}}{k_n}\right)^{-b}\nu_{\tau+1}^\ve\|u_\tau\|^{q+1}\\
&\le6^{q+1}\hc K^{q+2}\left(\f{k_{\tau+1}}{k_n}\right)^{-b}
\nu_{\tau+1}^\ve\lf\f{h_\tau}{h_n}\rf^{a(q+1)}\mu_n^{\ve(q+1)}\|\xi\|^{q+1}\\
&\le 6^{q+1}\hc K^{q+2}\left(\f{k_{\tau+1}}{k_n}\right)^{-b}
\nu_{\tau+1}^\ve\lf\f{h_\tau}{h_n}\rf^{a(q+1)}\mu_n^{\ve(q+1)}\bt_n^{-\ve(q+1)}
\end{align*}
and
\begin{align*}
\sum\limits_{\tau=n}^\iy B^{\la,5}_\tau &\le 6^{q+1}\hc
K^{q+2}k_n^{b}h_n^{-a(q+1)}\mu_n^{\ve(q+1)}\bt_n^{-\ve(q+1)}
\sum\limits_{\tau=n}^\iy
k_{\tau+1}^{-b}h_\tau^{a(q+1)}\nu_\tau^\ve\\
&\leq6^{q+1}\hc
K^{q+2}k_n^{b}h_n^{-a(q+1)}\mu_n^{\ve(q+1)}\bt_n^{-\ve q}C_n<\iy.
\end{align*}
Then the right-hand side of \eqref{eqyjj} is well-defined. If
\eqref{eqzwt} holds for $(n,\xi)\in Z_\bt$ and $m\ge n$, then we
rewrite \eqref{eqzwt} as
\begin{equation}\label{*co}
\begin{split}
\Phi_n(\xi)&=\ma(m,n)^{-1}\Phi_m(u_m)
-\sum\limits_{\tau=n}^{m-1}\ma(\tau+1,n)^{-1}Q_{\tau+1}f_\tau(u_\tau,\Phi_\tau(u_\tau),\la).
\end{split}
\end{equation}
By \eqref{deeqaaabbb}, \eqref{eqzmz}, and \eqref{eqcxy}, it
follows that
\begin{align*}
\|\ma(m,n)^{-1}\Phi_m(u_m)\|&\le4K^2\left(\f{k_m}{k_n}\right)^{-b}\nu_m^\ve
\left(\f{h_m}{h_n}\right)^a\mu_n^\ve\bt_n^{-\ve}\\&\le4K^2k_m^{-b}h_m^a\nu_m^\ve
k_n^{b}h_n^{-a}\mu_n^\ve\bt_n^{-\ve}.
\end{align*}
Therefore, letting $t\to\iy$ in \eqref{*co} yields \eqref{eqyjj}.
On the other hand, assume that \eqref{eqyjj} holds for any
$(n,\xi)\in Z_\beta$, then, for $(n,\xi) \in Z_{\bt\cdot\mu}(2
K)$,
\[
\|u_m\|\le2K
\left(\f{h_m}{h_n}\right)^a\mu_n^\ve\|\xi\|\leq\bt_m^{-\ve}
\f{h_m^a\bt_m^{\ve}}{h_n^a\bt_n^{\ve}} \leq\bt_m^{-\ve}.
\]
Hence, $(m,u_m)\in Z_\bt$ for any $m\ge n$. By \eqref{eqyjj}, one
gets
\begin{align*}
\ma(m,n)\Phi_n(\xi)
&=-\sum\limits_{\tau=n}^{m-1}\ma(m,\tau+1)Q_{\tau+1}f_\tau(u_\tau,\Phi_\tau(u_\tau),\la)\\
&\quad-\sum\limits_{\tau=m}^\iy
\ma(m,\tau+1)Q_{\tau+1}f_\tau(u_\tau,\Phi_\tau(u_\tau),\la)
\\&=-\sum\limits_{\tau=n}^{m-1}
\ma(m,\tau+1)Q_{\tau+1}f_\tau(u_\tau,\Phi_\tau(u_\tau),\la)+\Phi_m(\mu_m),
\end{align*}
where we have used \eqref{eqyjj} in the last equality with
$(n,\xi)$ replaced by $(m,u_m)$.

We now state the existence and uniqueness of the sequence of
operators $\Phi_n$ $=\Phi_n^\la\in \Xa$ such that \eqref{eqkkk}
and \eqref{*sec} hold for each $\la\in Y$.

Given $\la\in Y$, for $\Phi_n\in\Xa^*$ and $(n,\xi)\in Z_\bt$,
define an operator $J^\la$ by
\begin{align*}
(J^\la\Phi_n)(\xi)&=-\sum\limits_{\tau=n}^\iy
\ma(\tau+1,n)^{-1}Q_{\tau+1}f_\tau(u_\tau,\Phi_\tau(u_\tau),\la),
\end{align*}
where $u$ is the unique sequence of operators in \eqref{eqcxy} for
$(n,\xi,\Phi,\la)$. Moreover, we have $J^\la\Phi_n(0)=0$ and
\begin{align*}
B^{\la,6}_\tau:&=\|f_\tau(u^1_\tau,\Phi_\tau(u^1_\tau),\la)
-f_\tau(u^2_\tau,\Phi_\tau(u^2_\tau),\la)\|\\
&\leq3^{q+1} \hc\| u^1_\tau-u^2_\tau \| (\| u^1_\tau\|^q + \| u^2_\tau\|^q )\\
&\leq6^{q+1}\hc
K^qK_1\lf\f{h_\tau}{h_n}\rf^{a(q+1)}\mu_n^{\ve(q+1)}\bt_n^{-\ve
q}\|\xi_1-\xi_2\|
\end{align*}
for any $\xi_1, \xi_2\in B_n(\bt_n^{-\ve})$ and
$u^i=u^{\Phi,\la}_{\xi_i}$ for $i=1,2$. Then
\begin{align*}
\|J^\la\Phi_n(\xi_1)-J^\la\Phi_n(\xi_2)\|&\leq\sum\limits_{\tau=n}^\iy
\|\ma(\tau+1,n)^{-1}Q_{\tau+1}\|B^{\la,6}_\tau\\
&\leq6^{q+1}\hc K^{q+1}K_1k_n^{b}h_n^{-a(q+1)}
\mu_n^{\ve(q+1)}\bt_n^{-\ve q}C_n\|\xi_1-\xi_2\|\\
&\leq6^{q+1}\hat{c}K^{q+1}K_1\|\xi_1-\xi_2\|
\end{align*}
and
\[
\|J^\la\Phi_n(\xi_1)-J^\la\Phi_n(\xi_2)\|\le\|\xi_1-\xi_2\|
\]
since $\hat{c}$ is sufficiently small. It is not difficult to
extend $J^\la \Phi$ to $\Z^+ \times X$ by $J^\la\Phi_n(\xi)=
J^\la\Phi_n\big(\bt_n^{-\ve}\xi /\lVert \xi \rVert \big) $ for any
$(n,\xi) \not \in Z_\bt$, and hence, $J^\la(\Xa^*)\subset \Xa^*$.
For any $\Phi^1,\Phi^2\in \Xa^*$, writing $u^i=u_\xi^{\Phi^i,\la}$
for $i=1,2$, by \eqref{eqzmz}, \eqref{eqcxy}, and~\eqref{eqwww},
for each $(n,\xi)\in Z_\bt$, we have
\begin{align*}
B^{\la,7}_\tau:&=\|f_\tau(u^1_\tau, \Phi^1_\tau(u^1_\tau),\la)-
f_\tau(u^2_\tau,\Phi^2_\tau(u^2_\tau),\la)\|\\
&\leq3^q\hc\big(3\|u^1_\tau-u^2_\tau\|+
\|u^1_\tau\|\cdot|\Phi^1-\Phi^2|'\big)
(\|u^1_\tau\|^q+\|u^2_\tau\|^q)\\
&\leq2\cdot6^q\hc K^{q}(2K+3K_2)\|\xi\|\cdot
|\Phi^1-\Phi^2|'\cdot\lf\f{h_\tau}{h_n}\rf^{a(q+1)}\mu_n^{\ve(q+1)}\bt_n^{-\ve
q}
\end{align*}
and
\begin{align*}
\|J^\la\Phi^1_n(\xi)-J^\la\Phi^2_n(\xi)\|
&\leq\sum\limits_{\tau=n}^\iy\|\ma(\tau+1,n)^{-1}Q_{\tau+1}\|B^{\la,7}_\tau\\&\leq2\cdot6^q\hc
K^{q}(2K+3K_2)\|\xi\|\cdot |\Phi_1-\Phi_2|'.
\end{align*}
Therefore,  the operator $J^\la$ is a contraction for each $\la\in
Y$  and there exists a unique sequence of operators
$\Phi=\Phi^\la\in \Xa^*$ such that \eqref{eqyjj} holds for every
$(n,\xi)\in Z_\bt$. From (h$_2$) and the one-to-one correspondence
between $\Xa$ and $\Xa^*$, it follows that there exists a unique
sequence of operators $\Phi=\Phi^\la\in\Xa$ such that
\eqref{eqzwt} holds for $\la\in Y$ and $n\in\Z^+$, $\xi\in
B_n\left((\bt_n\cdot\mu_n)^{-\ve}/(2K)\right)$. For each
$(n,\xi)\in Z_{\bt\cdot\mu}(2K)$,  by \eqref{eqcxy}, we have
\begin{align*}
\|u_m\|&\leq2K(h_m/h_n)^a\mu_n^{\ve}\f{1}{2K}(\bt_n\cdot\mu_n)^{-\ve}\leq
(h_m/h_n)^a\bt_n^{-\ve}\leq\bt_n^{-\ve},
\end{align*}
which implies that $(m,u_m)\in Z_\bt$ for any $m\geq n$.
Therefore, \eqref{eqkkk} holds. For any $(n,\xi_1), (n,\xi_2) \in
Z_{\bt\cdot\mu}(2K)$, $\la\in Y$, and $\kappa=m-n \ge 0$, we have
\begin{align*}
&\lVert
\Psi_\kal^\la(n,\xi_1,\Phi_n(\xi_1))-\Psi_\kal^\la(n,\xi_2,\Phi_n(\xi_2))\rVert\\
&= \lVert(m,u^{\xi_1,\la}_m,\Phi_m(u^{\xi_1,\la}_m)-
(m,u^{\xi_2,\la}_m,\Phi_m(u^{\xi_2,\la}_m))\rVert
\\
&\le3\|u^{\xi_1,\la}_m-u^{\xi_2,\la}_m\|
\le3K_1(h_m/h_n)^a\mu_n^\ve\|\xi_1-\xi_2\|.
\end{align*}

To complete the proof, one only needs to establish the inequality
\eqref{*secb}.  For $(n,\xi)\in Z_{\bt\cdot\mu}(2K)$ and
$\la_1,\la_2\in Y$, set $u^{\la_1}=u_\xi^{\Phi^{\la_1},\la_1},
u^{\la_2}=u_\xi^{\Phi^{\la_2},\la_2}$, by \eqref{deeqaaabbb},
\eqref{eqzmz}, \eqref{*A1}, \eqref{*A2}, \eqref{eqcxy},
\eqref{eqzw} and \eqref{eqwww}, one has

\begin{align*}
B_\tau^{\la,8}:&=\|f_\tau(u^{\la_1}_\tau,\Phi^{\la_1}_\tau(u^{\la_1}_\tau),\la_1)-
f_\tau(u^{\la_2}_\tau,\Phi^{\la_2}_\tau(u^{\la_2}_\tau),\la_2)\|\\
&\leq\|f_\tau(u^{\la_1}_\tau,\Phi^{\la_1}_\tau(u^{\la_1}_\tau),\la_1)-
f_\tau(u^{\la_1}_\tau,\Phi^{\la_1}_\tau(u^{\la_1}_\tau),\la_2)\|\\
&\quad+\|f_\tau(u^{\la_1}_\tau,\Phi^{\la_1}_\tau(u^{\la_1}_\tau),\la_2)-
f_\tau(u^{\la_2}_\tau,\Phi^{\la_2}_\tau(u^{\la_2}_\tau),\la_2)\|\\
&\leq 6^{q+1}K^{q+1}\hc (h_\tau/h_n)^{a(q+1)}\mu_n^{\ve(q+1)}\\
&\quad\times
[|\la_1-\la_2|\cdot\|\xi\|^{q+1}+2\|\xi\|^q\|u^{\la_1}-u^{\la_2}\|_*
+\f{2}{3}|\Phi^{\la_1}-\Phi^{\la_2}|'\cdot\|\xi\|^{q+1}]
\end{align*}
and
\begin{align*}
\|\Phi^{\la_1}_n(\xi)-\Phi^{\la_2}_n(\xi)\|&\leq\sum\limits_{\tau=n}^\iy\|\ma(\tau+1,n)^{-1}Q_{\tau+1}\|B^{\la,8}_\tau\\
&\leq h'|\la_1-\la_2|\cdot\|\xi\|+2
h'\|u^{\la_1}-u^{\la_2}\|_*\\&\quad+(2/3)
h'|\Phi^{\la_1}-\Phi^{\la_2}|'\cdot\|\xi\|,
\end{align*}
where $h'=2\cdot3^{q+1}K^{2}\hc$, which implies that, if $\hc$ is
sufficiently small, then
\begin{align*}
|\Phi^{\la_1}-\Phi^{\la_2}|'\leq H|\la_1-\la_2|
+2H\|u^{\la_1}-u^{\la_2}\|_*/\|\xi\|
\end{align*}
and
\begin{align*}
\|\Phi^{\la_1}_n(\xi)-\Phi^{\la_2}_n(\xi)\|\leq
H|\la_1-\la_2|\cdot\|\xi\| +2H\|u^{\la_1}-u^{\la_2}\|_*,
\end{align*}
where $H=h'/(1-(2/3)h')$. Whence,
\begin{align*}
\|u^{\la_1}_m-u^{\la_2}_m\|&\leq
\sum\limits_{\tau=n}^{m-1}\|\ma(m,\tau+1)P_{\tau+1}\|B^{\la,8}_\tau\\
&\leq h'\left((1+(2/3)H)|\la_1-\la_2|\cdot\|\xi\|\right.\\
&\quad\left.+(2+(4/3)H)\|u^{\la_1}-u^{\la_2}\|_*\right)(h_m/h_n)^a\mu_n^\ve
\end{align*}
and
\begin{align*}
\|u^{\la_1}-u^{\la_2}\|_*&\leq
[\overline{H}/(2K)]|\la_1-\la_2|\cdot\|\xi\|\cdot\|u^{\la_1}_m-u^{\la_2}_m\|\\&\leq\overline{H}(h_m/h_n)^a\mu_n^\ve|\la_1-\la_2|\cdot\|\xi\|
\end{align*}
where $\overline{H}=h'(1+(2/3)H)/(1-h'(1+(2/3)H)/K)$. Therefore,
for $(n,\xi)\in Z_{\beta\cdot\mu}(2K)$, $\la_1,\la_2\in Y$ and
$\kappa=m-n \ge 0$, we have
\begin{align*}
&\lVert \Psi_\kal^{\la_1}(n,\xi,\Phi^{\la_1}_n(\xi))-\Psi_\kal^{\la_2}(n,\xi,\Phi^{\la_2}_n(\xi))\rVert \\
&= \lVert(m,u_m^{\la_1},\Phi^{\la_1}_m(u_m^{\la_1}))-
(m,u^{\la_2}_m,\Phi^{\la_2}_m(u_m^{\la_2}))\rVert
\\
&\le\|u_m^{\la_1}-u_m^{\la_2}\|+\|\Phi^{\la_1}_m(u_m^{\la_1})-\Phi^{\la_2}_m(u_m^{\la_2})\|\\
&\leq\|u_m^{\la_1}-u_m^{\la_2}\|+\|\Phi^{\la_1}_m(u_m^{\la_1})-\Phi^{\la_1}_m(u_m^{\la_2})\|+\|\Phi^{\la_1}_m(u_m^{\la_2})-\Phi^{\la_2}_m(u_m^{\la_2})\|\\
&\leq3\|u_m^{\la_1}-u_m^{\la_2}\|+|\Phi^{\la_1}-\Phi^{\la_2}|'\|u_m^{\la_2}\|\\
&\leq[3\overline{H}+2KH(1+\overline{H}/K)](h_m/h_n)^a\mu_n^\ve\|\la_1-\la_2|\cdot\|\xi\|,
\end{align*}
which implies that \eqref{*secb} holds. The proof is complete.
\end{proof}

\section*{Acknowledgement}
The authors would like to thank Prof. Ken Palmer for his
suggestive discussion and nice comments and to thank Prof. Luis
Barreira for leading them into the field of the nonuniform
hyperbolicity and the nonuniform dichotomies.

\end{document}